\def\grad{\nabla}
\def\bb{\mathbf{b}}
\def\bu{\mathbf{u}}
\def\bx{\mathbf{x}}  %{\mbox{\boldmath $\lambda$}}
\def\by{\mathbf{y}}
\def\bz{\mathbf{z}}
\def\bA{\mathbf{A}}
\def\bI{\mathbf{I}}
\def\bQ{\mathbf{Q}}
\def\cX{\mathcal{X}}
\def\cZ{\mathcal{Z}}
\def\smskip{\smallskip}
\def\texitem#1{\par\smskip\noindent\hangindent 25pt
               \hbox to 25pt {\hss #1 ~}\ignorespaces}
\def\fprod#1{\left\langle#1\right\rangle}
\newcommand{\BEAS}{\begin{eqnarray*}}
\newcommand{\EEAS}{\end{eqnarray*}}
\newcommand{\BEA}{\begin{eqnarray}}
\newcommand{\EEA}{\end{eqnarray}}
\newcommand{\BEQ}{\begin{eqnarray}}
\newcommand{\EEQ}{\end{eqnarray}}
\newcommand{\BIT}{\begin{itemize}}
\newcommand{\EIT}{\end{itemize}}
\newcommand{\BNUM}{\begin{enumerate}}
\newcommand{\ENUM}{\end{enumerate}}
\newcommand{\BA}{\begin{array}}
\newcommand{\EA}{\end{array}}
\newcommand{\reals}{\mathbb{R}}
\DeclareMathOperator*{\argmin}{\arg\!\min}
\newif\ifpagenumbering
\newsavebox{\theorembox}
\newsavebox{\lemmabox}
\newsavebox{\defnbox}
\newsavebox{\corollarybox}
\newsavebox{\remarkbox}
\newsavebox{\assbox}
\savebox{\theorembox}{\noindent\bf Theorem}
\savebox{\lemmabox}{\noindent\bf Lemma}
\savebox{\defnbox}{\noindent\bf Definition}
\savebox{\corollarybox}{\noindent\bf Corollary}
\savebox{\remarkbox}{\noindent\bf Remark}
\newtheorem{assumption}{Assumption}
\newtheorem{definition}{Definition}
\newtheorem{theorem}{Theorem}
\newtheorem{lemma}[theorem]{Lemma}
\newtheorem{proposition}[theorem]{Proposition}
\newtheorem{remark}{Remark}
\numberwithin{assumption}{section}
\numberwithin{definition}{section}
\numberwithin{theorem}{section}
\numberwithin{remark}{section}
\begin{document}

\title{An Accelerated Gradient Method for Convex Smooth \\Simple Bilevel Optimization}

\author{Jincheng Cao\thanks{Department of Electrical and Computer Engineering, The University of Texas at Austin, Austin, TX, USA \qquad\{jinchengcao@utexas.edu, rjiang@utexas.edu, mokhtari@austin.utexas.edu\}}         \quad
        Ruichen Jiang$^*$ 
   \quad
   Erfan Yazdandoost Hamedani\thanks{Department of Systems and Industrial Engineering, The University of Arizona, Tucson, AZ, USA \qquad\{erfany@arizona.edu\}}
   \quad
        Aryan Mokhtari$^*$
}

% \author{Jincheng Cao\thanks{Department of Electrical and Computer Engineering, The University of Texas at Austin, Austin, TX, USA \qquad\qquad\{jinchengcao@utexas.edu, rjiang@utexas.edu, mokhtari@austin.utexas.edu\}}         \and
%         Ruichen Jiang$^*$ 
%    \and
%    Erfan Yazdandoost Hamedani\thanks{Department of Systems and Industrial Engineering, The University of Arizona, Tucson, AZ, USA \qquad\{erfany@arizona.edu\}}
%    \and
%    % Erfan Yazdandoost Hamedani$^\dagger$
%    % \and 
%         Aryan Mokhtari$^*$
% }
% \author{Liam Collins\thanks{Chandra Family Department of Electrical and Computer Engineering, 
% The University of Texas at Austin, Austin, TX,  USA.\qquad\{liamc@utexas.edu, mokhtari@austin.utexas.edu, sanjay.shakkottai@utexas.edu\}.}, \quad Hamed Hassani\thanks{Department of
% Electrical and Systems Engineering, University of Pennsylvania, Philadelphia, PA, USA.\qquad\qquad\{hassani@seas.upenn.edu\}} , \quad Mahdi Soltanolkotabi\thanks{Ming Hsieh Department of
% Electrical and Computer Engineering, University of Southern California, Los Angeles, CA, USA.\qquad\{soltanol@usc.edu\}},\\ Aryan Mokhtari$^*$,\quad Sanjay Shakkottai$^*$}

\date{}

\maketitle
\begin{abstract} 
In this paper, we focus on simple bilevel optimization problems, where we minimize a convex smooth objective function over the optimal solution set of another convex smooth constrained optimization problem.
We present a novel bilevel optimization method that locally approximates the solution set of the lower-level problem using a cutting plane approach and employs an accelerated gradient-based update to reduce the upper-level objective function over the approximated solution set.
We measure the performance of our method in terms of suboptimality and infeasibility errors and provide non-asymptotic convergence guarantees for both error criteria. Specifically, when the feasible set is compact, we show that our method requires at most $\mathcal{O}(\max\{1/\sqrt{\epsilon_{f}}, 1/\epsilon_g\})$ iterations to find a solution that is $\epsilon_f$-suboptimal and $\epsilon_g$-infeasible. 
Moreover, under the additional assumption that the lower-level objective satisfies the $r$-th Hölderian error bound,  we show that our method achieves an iteration complexity of $\mathcal{O}(\max\{\epsilon_{f}^{-\frac{2r-1}{2r}},\epsilon_{g}^{-\frac{2r-1}{2r}}\})$, which matches the optimal complexity of single-level convex constrained optimization when $r=1$. 
\end{abstract}

\newpage
% \tableofcontents
\newpage

\section{Introduction}
In this paper, we investigate a class of bilevel optimization problems known as simple bilevel optimization, in which we aim to minimize an upper-level objective function over the solution set of a corresponding lower-level problem. Recently this class of problems has gained great attention due to their board applications in continual learning \cite{borsos2020coresets}, hyper-parameter optimization \cite{franceschi2018bilevel,shaban2019truncated}, meta-learning \cite{rajeswaran2019meta,bertinetto2018meta}, and over-parameterized machine learning \cite{jiangconditional, cao2024projection, samadi2023achieving}. Specifically, we focus on the following bilevel optimization problem:
\begin{equation}\label{eq:bi-simp}
    \min_{\bx\in \reals^n}~f(\bx)\qquad \hbox{s.t.}\quad  \bx\in\argmin_{\bz\in \cZ}~g(\bz),
\end{equation} 
where $\cZ$ is a convex set and $f,g:\reals^n\to \reals$ are convex and continuously differentiable functions on an open set containing $\cZ$. We assume that the lower-level objective function $g$ is convex but may not be strongly convex. Hence, the lower-level problem may have multiple optimal solutions. 
Throughout the paper, we will use $\bx^*$ to denote an optimal solution of problem~\eqref{eq:bi-simp}. Further, we define $f^* \triangleq f(\bx^*)$ and $g^* \triangleq g(\bx^*)$, which represent the optimal value of problem~\eqref{eq:bi-simp} and the optimal value of the lower-level objective $g$, respectively.  
This class of problems is often referred to as the ``simple bilevel problem" in the literature \cite{dempe2010optimality, dutta2020algorithms, shehu2021inertial} to distinguish it from general settings where the lower-level problem is parameterized by some upper-level variables.

The main challenge in solving problem \eqref{eq:bi-simp} arises from the fact that the feasible set, i.e., the optimal solution set of the lower-level problem, lacks a simple characterization and is not explicitly provided. Consequently, direct application of projection-based or projection-free methods is infeasible, as projecting onto or solving a linear minimization problem over such an implicitly defined feasible set is intractable.
Instead, our approach begins by constructing an approximation set that possesses specific properties, serving as a surrogate for the true feasible set of the bilevel problem. In Section~\ref{sec:algorithm}, we delve into the details of how such a set is constructed. By using this technique and building upon the idea of the projected accelerated gradient method, we establish the best-known complexity bounds for solving problem~\eqref{eq:bi-simp}.

To provide a clearer context for our results,  note that the best-known complexity bound for achieving an $\epsilon$-accurate solution in single-level convex constrained optimization problems is $\mathcal{O}(\epsilon^{-0.5})$, as demonstrated in \cite{beck2009fast}. This bound is optimal and was achieved using the accelerated proximal method or FISTA (Fast Iterative Shrinkage-Thresholding Algorithm), which plays a canonical role in the development of our algorithm as well.

While the literature on bilevel optimization is not as extensive as that for single-level optimization, there have been recent non-asymptotic results 
for solving this class of problems, which we summarize in Table~\ref{sum}. 
Specifically, these results aim to establish convergence rates on the \emph{infeasibility gap} $g(\bx_k)-g^*$ and the \emph{suboptimality gap} $f(\bx_k)-f^*$ after $k$ iterations. \citet{kaushik2021method} demonstrated that an iterative regularization-based method achieves a convergence rate of  $\mathcal{O}(1/k^{0.5-b})$ in terms of suboptimality and a rate of $\mathcal{O}(1/k^b)$ in terms of infeasibility, where $b \in (0,0.5)$ is a user-defined parameter. Therefore, if we set $b = 0.25$ to balance these two rates, it would require an iteration complexity of $O(\max\{1/\epsilon_f^4,1/\epsilon_{g}^4\})$ to find a solution that is $\epsilon_{f}$-optimal and
$\epsilon_{g}$-infeasible.
% would require an iteration complexity of $O(\max\{1/\epsilon_f^4,1/\epsilon_{g}^4\})$. 
Later, the Bi-Sub-Gradient (Bi-SG) algorithm was proposed by \citet{merchav2023convex}
to address convex simple bilevel optimization problems with nonsmooth upper-level objective functions. They showed convergence rates of $\mathcal{O}(1/k^{1-\alpha})$ and $\mathcal{O}(1/k^{\alpha})$ in terms of
suboptimality and infeasibility,
% the upper and lower-level objective functions,
respectively, where $\alpha \in(0.5,1)$ serves as a hyper-parameter. This implies that if we balance the rates by setting $\alpha=0.5$, the iteration complexity of Bi-SG is $\mathcal{O}(\max\{1/\epsilon_f^{2},1/\epsilon_g^{2}\})$. 
% \blue{where $\epsilon_f$ and $\epsilon_g$ denote the target accuracy for the upper-level objective and the lower-level objective, respectively.} 
Additionally, \citet{shen2023online} introduced a structure-exploiting method and presented an iteration complexity of  $O(\max\{1/\epsilon_f^2,1/\epsilon_{g}^2\})$ for it, when the upper-level objective is convex and the lower-level objective is convex and smooth. Note that imposing additional assumptions on the upper-level function, such as smoothness or strong convexity, does not result in faster rates for their method.

% Recently, \citet{jiangconditional} pursued a different approach by presenting a projection-free conditional gradient method that employs a cutting plane to approximate the solution set of the lower-level problem. Under the assumption that upper- and lower-level objective functions are convex and smooth, their method, denoted as CG-BiO, achieves a complexity of $\mathcal{O}(\max\{1/\epsilon_f,1/\epsilon_g\})$. Furthermore, as the suboptimality gap $f(\hat{\bx})-f^*$ may be negative for an infeasible point $\hat{\bx}$, a more desirable performance metric would be the \emph{absolute suboptimality gap} $|f(\hat{\bx})-f^*|$. To achieve such guarantee, \citet{jiangconditional} introduced the H\"olderian error bound condition (cf. Assumption~\ref{ass:holderian}) on $g$. Specifically, under the $r$-th-order H\"olderian error bound condition on $g$, they proved that CG-BiO finds a solution $\hat{\bx}$ with $|f(\hat{\bx})-f^*| \leq \epsilon_f$ and $g(\hat{\bx})-g^* \leq \epsilon_g$ after $\mathcal{O}(\max\{1/\epsilon_{f}^r,1/\epsilon_g\})$ iterations. 
Recently, \cite{jiangconditional} presented a projection-free conditional gradient method (CG-BiO) that uses a cutting plane to approximate the solution set of the lower-level problem. Assuming both upper- and lower-level objective functions are convex and smooth, CG-BiO achieves a complexity of $\mathcal{O}(\max\{1/\epsilon_f,1/\epsilon_g\})$. 
Since the suboptimality gap $f(\hat{\bx})-f^*$ may be negative for an infeasible point $\hat{\bx}$, a more desirable metric is the \emph{absolute suboptimality gap} $|f(\hat{\bx})-f^*|$. To ensure this, \cite{jiangconditional} introduced the Hölderian error bound condition on $g$. Specifically, under the $r$-th order Hölderian error bound condition, CG-BiO finds a solution $\hat{\bx}$ with $|f(\hat{\bx})-f^*| \leq \epsilon_f$ and $g(\hat{\bx})-g^* \leq \epsilon_g$ after $\mathcal{O}(\max\{1/\epsilon_{f}^r,1/\epsilon_g\})$ iterations.
More recently, \cite{samadi2023achieving} introduced the regularized proximal accelerated method (R-APM), which runs the proximal accelerated gradient method on a weighted sum of the upper- and lower-level objective functions. Assuming both functions are convex and smooth, they established a complexity bound of $\mathcal{O}(\max\{1/\epsilon_f,1/\epsilon_g\})$ to find an $(\epsilon_f,\epsilon_g)$ solution. This bound is worse than the $\mathcal{O}(\max\{1/\sqrt{\epsilon_f},1/\epsilon_{g}\})$ complexity achieved by our proposed AGM-BiO method, assuming the feasible set $\mathcal{Z}$ is compact.
Additionally, \cite{samadi2023achieving} showed that when the lower-level objective function $g$ satisfies the weak sharpness property (equivalent to the Hölderian error bound condition with $r=1$), R-APM finds an $(\epsilon_f,\epsilon_g)$-absolute optimal solution after at most $\mathcal{O}(\max\{1/\sqrt{\epsilon_f},1/\sqrt{\epsilon_g}\})$ iterations. This result is comparable to our convergence result for AGM-BiO, which considers a more general Hölderian error bound condition.

\begin{table*}[!tbp]
    \vspace{-2mm}
    \centering
    \caption{Non-asymptotic results on simple bilevel optimization. (\textcircled{1}: with a first-order H\"olderian error bound assumption on $g$; \textcircled{r}: with an $r$th-order ($r\geq1$) H\"olderian error bound assumption on $g$; \textcircled{A}: with an additional assumption implying that the projection onto the sublevel set of $f$ is easy to compute.)}\label{sum} 
    % The abbreviations  ``SC'', ``C'', and ``NC'' stand for ``strongly convex'', ``convex'', and ``non-convex'', respectively. Note that $\epsilon = \min\{\epsilon_f,\epsilon_g\}$}\label{tab:bilevel}
    \resizebox{\textwidth}{!}{%
        \begin{tabular}{cccccc}
            \toprule
\multirow{2}{*}{References}         & Upper level                                            & \multicolumn{2}{c}{Lower level}                                       & \multicolumn{2}{c}{Convergence}                                    \\ \cmidrule(r){2-6}
    & Objective $f$                                                          & Objective $g$            & Feasible set $\mathcal{Z}$                          & Upper level &   Lower level                                                                                                                \\ \midrule
           a-IRG \cite{kaushik2021method}    & Convex, Lipschitz    & Convex, Lipschitz      & Closed    & $\mathcal{O}(1/\epsilon_{f}^{4})$ & $\mathcal{O}(1/\epsilon_{g}^{4})$
           \\ \midrule
            Bi-SG \cite{merchav2023convex} & Convex, Nonsmooth & Convex, Composite & Closed &$\mathcal{O}(1/\epsilon_{f}^{\frac{1}{1-\alpha}})$  &$\mathcal{O}(1/\epsilon_{g}^{\frac{1}{\alpha}}),\alpha \in (0.5,1)$
            \\ \midrule
            SEA \cite{shen2023online} & Convex & Convex, Smooth & Compact &$\mathcal{O}(1/\epsilon_{f}^2)$  &$\mathcal{O}(1/\epsilon_{g}^2)$
           % \\ \midrule
            % SEA \cite{shen2023online} & Convex, Smooth & Convex, Smooth & Compact &$\mathcal{O}(1/\epsilon_{f})$  &$\mathcal{O}(1/\epsilon_{g})$
            \\ \midrule

           CG-BiO \cite{jiangconditional} & Convex, Smooth & Convex, Smooth & Compact &$\mathcal{O}(1/\epsilon_{f})$  &$\mathcal{O}(1/\epsilon_{g})$
           \\ \midrule
           R-APM \cite{samadi2023achieving} & Convex, Smooth & Convex, Composite & Closed &$\mathcal{O}(1/\epsilon_{f})$  &$\mathcal{O}(1/\epsilon_{g})$
           \\ \midrule
           \textbf{AGM-BiO (Ours)}                                        & Convex, Smooth                                                   & Convex, Smooth                      & Compact       &$\mathcal{O}(1/\epsilon_{f}^{0.5})$  &$\mathcal{O}(1/\epsilon_{g})$
           \\ \midrule
           R-APM \textcircled{1} \cite{samadi2023achieving} & Convex, Smooth & Convex, Composite & Closed &$\mathcal{O}(1/\epsilon_{f}^{0.5})$  &$\mathcal{O}(1/\epsilon_{g}^{0.5})$
           \\ \midrule
           Bisec-BiO \textcircled{A} \cite{wang2024near} & Convex, Composite & Convex, Composite & Closed 
           % &$\mathcal{O}(1/\epsilon_{f}^{0.5})$  &$\mathcal{O}(1/\epsilon_{g}^{0.5})$
            & \multicolumn{2}{c} {$\tilde{\mathcal{O}}(\max\{1/\epsilon_{f}^{0.5}, 1/\epsilon_{g}^{0.5}\})$}
           
           \\ \midrule
           \textbf{AGM-BiO \textcircled{1} (Ours)}                                        & Convex, Smooth                                                   & Convex, Smooth                      & Closed       &$\mathcal{O}(1/\epsilon_{f}^{0.5})$  &$\tilde{\mathcal{O}}(1/\epsilon_{g}^{0.5})$
           %  \\ \midrule
           % Bisec-BiO \textcircled{r} \cite{wang2024near} & Convex, Composite & Convex, Composite & Closed 
           % % &$\mathcal{O}(1/\epsilon_{f}^{0.5})$  &$\mathcal{O}(1/\epsilon_{g}^{0.5})$
           % & \multicolumn{2}{c} {$\mathcal{O}(1/\epsilon_f^{0.5r})+\mathcal{O}(1/\epsilon_g^{0.5} )$}   
            \\ \midrule
           PB-APG \textcircled{r} \cite{chen2024penalty} & Convex, Composite & Convex, Composite & Compact 
           % &$\mathcal{O}(1/\epsilon_{f}^{0.5})$  &$\mathcal{O}(1/\epsilon_{g}^{0.5})$
           & \multicolumn{2}{c} {$\mathcal{O}(1/\epsilon_f^{0.5r})+\mathcal{O}(1/\epsilon_g^{0.5} )$}

           \\ \midrule
           \textbf{AGM-BiO \textcircled{r} (Ours)}                                        & Convex, Smooth                                                   & Convex, Smooth                      & Closed       &$\tilde{\mathcal{O}}(1/\epsilon_{f}^{\frac{2r-1}{2r}})$  &$\tilde{\mathcal{O}}(1/\epsilon_{g}^{\frac{2r-1}{2r}})$
           % & \multicolumn{2}{c}{${\tilde{\mathcal{O}}}(\max\{1/\epsilon_f^{2},1/\epsilon_g^{2}\})$}                                                                   % \\ \midrule
           % \textbf{Algorithm1}                & Stochastic                            & NC, smooth                                                   & C, smooth                      & Compact              & \multicolumn{2}{c}{${\tilde{\mathcal{O}}}(\max\{1/\epsilon_f^3, 1/\epsilon_g^{3} \})$}             \\ \midrule
           % \textbf{Algorithm2}          & Finite-sum                                & C, smooth                                                   & C, smooth                      & Compact              & \multicolumn{2}{c}{${\tilde{\mathcal{O}}}(\max\{1/\epsilon_f,1/\epsilon_g\})$}                                                                  \\ \midrule
           % \textbf{Algorithm2}                & Finite-sum                            & NC, smooth                                                   & C, smooth                      & Compact              & \multicolumn{2}{c}{${\tilde{\mathcal{O}}}(\max\{1/\epsilon_f^{2}, 1/\epsilon_g^{2} \})$}                      
           \\ \bottomrule
        \end{tabular}%
     }
     \vspace{-1mm}
\end{table*}

\textbf{Contribution.} In this paper, we present a novel accelerated gradient-based bilevel optimization method with state-of-the-art non-asymptotic guarantees in terms of both suboptimality and infeasibility.
At each iteration, our proposed AGM-BiO method uses a cutting plane to linearly approximate the solution set of the lower-level problem, and then runs a variant of the projected accelerated gradient update on the upper-level objective function. Next, we summarize our theoretical guarantees for the AGM-BiO method:
\begin{itemize}
\vspace{-2mm}
    \item When the feasible set $\mathcal{Z}$ is compact, we show that AGM-BiO finds $\hat{\bx}$ that satisfies $f(\hat{\bx}) - f^* \leq \epsilon_{f}$ and $g(\hat{\bx}) - g^* \leq \epsilon_{g}$ within $\mathcal{O}(\max\{1/\sqrt{\epsilon_f},1/\epsilon_{g}\})$ iterations, where $f^*$ is the optimal value of problem~\eqref{eq:bi-simp} and $g^*$ is the optimal value of the lower-level problem. 
    \vspace{-2mm}
    \item With an additional $r$-th-order ($r\geq1$) Hölderian error bound assumption on the lower-level problem, AGM-BiO finds $\hat{\bx}$ satisfying $f(\hat{\bx})-f^* \leq \epsilon_{f}$ and $g(\hat{\bx})-g^* \leq \epsilon_{g}$ within $\mathcal{O}(\max\{\epsilon_{f}^{-\frac{2r-1}{2r}},\epsilon_{g}^{-\frac{2r-1}{2r}}\})$ iterations. Moreover, it achieves the stronger guarantee that $|f(\hat{\bx})-f^*| \leq \epsilon_f$ and $g(\hat{\bx})-g^* \leq \epsilon_{g}$ within $\mathcal{O}(\max\{\epsilon_{f}^{-\frac{2r-1}{2}},\epsilon_{g}^{-\frac{2r-1}{2r}}\})$ iterations. 
\end{itemize}

\vspace{-1mm}
These bounds all achieve the best-known complexity bounds in terms of both suboptimality and infeasibility guarantees for the considered settings. 
All the non-asymptotic results are summarized and compared in Table~\ref{sum}.

% \noindent \textbf{Discussion on a concurrent work}.  \citet{samadi2023achieving} considered a penalty-based approach and introduced the regularized proximal accelerated method (R-APM), which essentially runs the projected accelerated gradient method on a weighted sum of the upper and lower-level objective functions. Under the standard conditions that both functions are convex and smooth, they established a complexity bound of $\mathcal{O}(\max\{1/\epsilon_f,1/\epsilon_g\})$ to find a point $\hat{\bx}$ that satisfies $f(\hat{\bx})-f(\bx^*)\leq \epsilon_f$ and $g(\hat{\bx})-g(\bx^*)\leq \epsilon_g$. 
% We note that this bound is worse than the complexity of $\mathcal{O}(\max\{1/\sqrt{\epsilon_f},1/\epsilon_{g}\})$ achieved by our proposed AGM-BiO method,  under an additional assumption that the feasible set $\mathcal{Z}$ is compact.
% Moreover, \citet{samadi2023achieving} show that when the lower-level objective function $g$ satisfies the weak sharpness property (which is equivalent to the H\"oldrian error bound condition with $r=1$), their proposed R-APM method finds $\hat{\bx}$ satisfying $|f(\hat{\bx})-f(\bx^*)|\leq \epsilon_f$ and $g(\hat{\bx})-g(\bx^*)\leq \epsilon_g$ after at most $\mathcal{O}(\max\{1/\sqrt{\epsilon_f},1/\sqrt{\epsilon_g}\})$ iterations.
% This result is comparable to our convergence result for AGM-BiO, while we also consider a more general Hölderian error bound condition.
\noindent \textbf{Discussions on two concurrent works}. The authors in \cite{wang2024near} proposed a bisection algorithm with a total operation complexity of $\mathcal{\Tilde{O}}(\max\{\epsilon_{f}^{-0.5}, \epsilon_{g}^{-0.5}\})$ to find an $(\epsilon_{f}, \epsilon_{g})$-optimal solution, assuming the upper-level objective $f$ meets specific criteria. Specifically, Assumption 1(iv) in \cite{wang2024near} implies the ability to compute the projection onto the sublevel set of the upper-level function $f$. However, this assumption may not hold for general functions, such as the mean squared loss function in our over-parameterized regression example in Section ~\ref{sec:experiment}.
In \cite{chen2024penalty}, the authors introduced the penalty-based accelerated proximal gradient method (PB-APG) for solving simple bilevel optimization problems with the $r$-th order Hölderian error bound assumption on the lower-level objective $g$. Their algorithm, similar to \cite{samadi2023achieving}, runs the accelerated proximal gradient method on a weighted sum of the upper and lower-level objective functions. PB-APG achieves a complexity of $\mathcal{O}(\epsilon_f^{-0.5r})+\mathcal{O}(\epsilon_g^{-0.5})$ to find an $(\epsilon_{f}, \epsilon_{g})$-optimal solution. The term $\mathcal{O}(1/\epsilon_{f}^{0.5r})$ can become significantly large as the order of the Hölderian error bound $r$ increases.
In contrast, our algorithm, AGM-BiO, avoids this issue, requiring at most $\tilde{\mathcal{O}}(\max\{\epsilon_{f}^{-\frac{2r-1}{2r}},\epsilon_{g}^{-\frac{2r-1}{2r}}\})$ iterations to achieve an $(\epsilon_{f}, \epsilon_{g})$-optimal solution. Therefore, regardless of how large $r$ is, the worst-case complexity for AGM-BiO is $\tilde{\mathcal{O}}(\max\{\epsilon_{f}^{-1},\epsilon_{g}^{-1}\})$. Thus, our method achieves a better rate than PB-APG when $r > 1$.

 % While our proposed  AGM-BiO algorithm achieves the iteration complexity $\mathcal{O}(\max\{\epsilon_{f}^{-0.5},\epsilon_{g}^{-1}\})$ to obtain $(\epsilon_{f},\epsilon_{g})$-optimal solution defined in Section 2. With an additional assumption of weak sharpness of lower level objective function $g$, we can further improve our iteration complexity to $\mathcal{O}(\max\{\epsilon_{f}^{-0.5},\epsilon_{g}^{-0.5}\})$, which is the same as the results shown in \cite{samadi2023achieving}. Moreover, our analysis also holds under a general $r$th-order Hölderian error bound assumption on $g$, where our algorithm achieves the iteration complexity $\mathcal{O}(\max\{\epsilon_{f}^{-\frac{2r-1}{2r}},\epsilon_{g}^{-\frac{2r-1}{2r}}\})$. 
% Note that weak sharpness is the 1st-order Hölderian error bound assumption.

\noindent\textbf{Additional related work.}
Previous work has explored ``asymptotic" results for simple bilevel problems, dating back to Tikhonov-type regularization introduced in \cite{tikhonov1977solutions}. In this approach, the objectives of both levels are combined into a single-level problem using a regularization parameter $\sigma > 0$ and as $\sigma \to 0$ the solutions of the regularized single-level problem approaches a solution to the bilevel problem in \eqref{eq:bi-simp}. 
Further, \citet{solodov2007explicit} proposed the explicit descent method that solves problem \eqref{eq:bi-simp} when upper and lower-level functions are smooth and convex. This result was further extended to a non-smooth setting in \cite{solodov2007bundle}. The results in both \cite{solodov2007explicit} and \cite{solodov2007bundle} only indicated that both upper and lower-level objective functions converge asymptotically. Moreover, \citet{helou2017} proposed the $\epsilon$-subgradient method to solve simple bilevel problems and showed its asymptotic convergence. Specifically, they assumed the upper-level objective function to be convex and utilized two different algorithms, namely, the Fast Iterative Bilevel Algorithm (FIBA) and Incremental Iterative Bilevel Algorithm (IIBA), that consider smooth and non-smooth lower-level objective functions, respectively. 

Some studies have only established non-asymptotic convergence rates for the lower-level problem. One of the pioneering methods in this category is the minimal norm gradient (MNG) method, introduced by \citet{beck2014first}. This method assumes that the upper-level objective function is smooth and strongly convex, while the lower-level objective function is smooth and convex. The authors showed that the lower-level objective function reaches an iteration complexity of $\mathcal{O}(1/\epsilon^{2})$.
Subsequently, the Bilevel Gradient SAM (BiS-SAM) method was introduced by \citet{sabach2017first}, and it was proven to achieve a complexity of $\mathcal{O}(1/\epsilon)$ for the lower-level problem. A similar rate of convergence was also attained in \cite{malitsky2017chambolle}.

\section{Preliminaries}
In this section, we state the necessary assumptions and introduce the notions of optimality that we use in the paper.
\subsection{Assumptions and Definitions}
We focus on the case where both the upper and lower-level functions $f$ and $g$ are convex and smooth. Formally, we make the following assumptions.

\begin{assumption}\label{ass:1}
    Let $\|\cdot\|$ be an arbitrary norm on $\mathbb{R}^d$ and $\|\cdot\|_*$ be its dual norm. We assume these conditions hold: 
    \begin{enumerate}[label=(\roman*)]
        \item $\mathcal{Z} \subset \mathbb{R}^d$ is convex and compact with diameter $D$, i.e., $\|\mathbf{x}-\mathbf{y}\| \leq D$ for all $\mathbf{x}, \mathbf{y} \in \mathcal{Z}$.
        \item $g$ is convex and continuously differentiable on an open set containing $\mathcal{Z}$, and its gradient is $L_g$-Lipschitz, i.e., $\|\nabla g(\mathbf{x})-\nabla g(\mathbf{y})\|_* \leq L_g\|\mathbf{x}-\mathbf{y}\|$ for all $\mathbf{x}, \mathbf{y} \in \mathcal{Z}$.
        \item $f$ is convex and continuously differentiable and its gradient is Lipschitz with constant $L_f$.
    \end{enumerate}

\end{assumption}

In this paper, we denote the optimal value and the optimal solution set of the lower-level problem as  $g^* \triangleq \min _{\mathbf{z} \in \mathcal{Z}} g(\mathbf{z})$ and $\mathcal{X}_g^* \triangleq$ $\operatorname{argmin}_{\mathbf{z} \in \mathcal{Z}} g(\mathbf{z})$, respectively. By Assumption~\ref{ass:1}, the set $\cX_g^*$ is nonempty, compact, and convex, but in general, not a singleton since $g$ could have multiple optimal solutions on $\mathcal{Z}$, as $g$ is only convex but not strongly convex. Moreover, we use $f^*$ to denote the optimal value and $\bx^*$ to denote an optimal solution of problem~\eqref{eq:bi-simp}. %These are guaranteed to exist, as $f$ is continuous and $\mathcal{X}_g^*$ is compact.

Note that in the simple bilevel problem, the suboptimality of a solution $\hat{\bx}$ is measured by the difference $f(\hat{\bx}) - f^*$. Similarly, its infeasibility is indicated by $g(\hat{\bx}) - g^*$. Since it is necessary to control both of these error terms to ensure minimal suboptimality and infeasibility, we formally define an $(\epsilon_f, \epsilon_g)$-optimal solution as follows.

\begin{definition}
    $\left(\left(\epsilon_f, \epsilon_g\right)\right.$-optimal solution). A point $\hat{\mathbf{x}} \in \mathcal{Z}$ is $\left(\epsilon_f, \epsilon_g\right)$-optimal for problem~\eqref{eq:bi-simp} if
$
f(\hat{\mathbf{x}})-f^* \leq \epsilon_f$ and $ g(\hat{\mathbf{x}})-g^* \leq \epsilon_g $.
\end{definition}

This definition is commonly used in simple bilevel optimization literature \cite{jiangconditional,merchav2023convex,cao2024projection,samadi2023achieving}.
Due to the special structure of bilevel optimization, it is not guaranteed that $f(\hat{\bx})-f^*$ will always be positive. To address this, we propose the following definition, utilizing $|f(\hat{\bx})-f^*|$  as the absolute optimal criterion.

\begin{definition}
    $\left(\left(\epsilon_f, \epsilon_g\right)\right.$-absolute optimal solution). A point $\hat{\mathbf{x}} \in \mathcal{Z}$ is $\left(\epsilon_f, \epsilon_g\right)$-absolute optimal for problem~\eqref{eq:bi-simp} if
$
|f(\hat{\mathbf{x}})-f^*| \leq \epsilon_f$ and $|g(\hat{\mathbf{x}})-g^*| \leq \epsilon_g$.
\end{definition}
% {\color{red}{we need a def with abs value}}

\section{Algorithm}\label{sec:algorithm}
% \blue{\small [High-level idea: reformulate the original problem as a constrained problem and apply accelerated methods; Approximate the set by cutting plane; Design choices: which accelerated method to use? How to choose the approximation set?]}

Before presenting our method, we first introduce a conceptual accelerated gradient method for solving the simple bilevel problem in \eqref{eq:bi-simp}. The first step is to recast it as a constrained optimization problem:  
\begin{equation}\label{eq:bi-simp-recasted}
    \min_{\bx\in \reals^n}~f(\bx)\quad \hbox{s.t.}\quad  \bx\in \mathcal{X}^*_g,
\end{equation} 
where $\mathcal{X}^*_g \triangleq \argmin_{\bz\in \cZ}~g(\bz)$ denotes the solution set of the lower-level objective. Thus, conceptually, we can apply Nesterov's accelerated gradient method (AGM) to obtain a rate of $O(1/k^2)$ on the upper-level objective~$f$. Several variants of AGM have been proposed in the literature; see, e.g., \cite{d2021acceleration}. 
In this paper, we consider a variant proposed in \cite{Tseng2008}. It involves three intertwined sequences of iterates $\{\bx_k\}_{k\geq 0},\{\by_k\}_{k\geq 0},\{\bz_k\}_{k\geq 0}$ and the scalar variables $\{a_k\}_{k \geq 0}$ and $\{A_k\}_{k \geq 0}$. In the first step, we compute the auxiliary iterate $\by_k$ by 
\begin{equation}
    \by_k = \frac{A_k}{A_k+a_k} \bx_k + \frac{a_k}{A_k+a_k} \bz_k. 
\end{equation}
Then in the second step, we update $\bz_{k+1}$ by 
\begin{equation}
    \bz_{k+1} = \Pi_{\mathcal{X}^*_g} (\bz_k-a_k \nabla f(\by_k)),  \label{eq:update_of_z}
\end{equation}
where $\Pi_{\mathcal{X}_g^*}(\cdot)$ denotes the Euclidean projection onto the set $\mathcal{X}_g^*$. Finally, in the third step, we compute $\bx_{k+1}$ and $A_{k+1}$
\begin{equation}
    \bx_{k+1} = \frac{A_k}{A_k+a_k} \bx_k+\frac{a_k}{A_k+a_k} \bz_{k+1}
\end{equation}
and 
\begin{equation}
    A_{k+1} = A_k+a_k
\end{equation}
% \begin{equation*}
% \begin{aligned}
%     &\bx_{k+1} = \frac{A_k}{A_k+a_k} \bx_k+\frac{a_k}{A_k+a_k} \bz_{k+1}\\
%     &\text{and} \quad A_{k+1} = A_k+a_k.
% \end{aligned}
% \end{equation*}
It can be shown that if the stepsize is selected as $a_k = \frac{k+1}{4 L_f}$, then the suboptimality gap $f(\bx_{k})-f(\bx^*)$ 
 of the iterates generated by the method above converges to zero at the optimal rate of $\mathcal{O}(1/k^2)$. In this case, indeed all the iterates are feasible as it is possible to project onto the set $\mathcal{X}_g^*$.
However, the conceptual method above is not directly implementable for the simple bilevel problem considered in this paper, as the constraint set $\mathcal{X}_g^*$ is not explicitly given. As a result projection onto the set $\mathcal{X}_g^*$ is not computationally tractable.

To address this issue, our key idea involves replacing the implicit set $\mathcal{X}_g^*$ in \eqref{eq:update_of_z} with another set, $\mathcal{X}_k$, which can be explicitly characterized. This approach makes it feasible to perform the Euclidean projection onto $\mathcal{X}_k$. Additionally, $\mathcal{X}_k$ must possess another important property: it should encompass the optimal solution set $\mathcal{X}_g^*$.

%To address this issue, our key idea is to replace the implicit set $\mathcal{X}_g^*$ in \eqref{eq:update_of_z} by another set $\mathcal{X}_k$ that can be characterized explicitly, and hence it would be possible to perform the Euclidean projection onto $\mathcal{X}_k$. In addition to this property, another important property that  $\mathcal{X}_k$ should satisfy is containing the optimal solution set $\mathcal{X}_g^*$.  

\begin{algorithm}[t!]
	\caption{Accelerated Gradient Method for Bilevel Optimization (AGM-BiO)}\label{alg:AGM-BiO}
	\begin{algorithmic}[1]
	\STATE \textbf{Input}: A sequence $\{g_k\}_{k=0}^K$, a scalar $\gamma \in (0,1]$
	\STATE \textbf{Initialization}: $A_0 = 0$, $\bx_0=\bz_0\in \reals^n$
	\FOR{$k = 0,\dots,K$}
	    \STATE Set  $\quad \displaystyle{a_{k} = \gamma\frac{k+1}{4L_{f}}}$
     \STATE Compute 
     $\quad \displaystyle{\by_k = \frac{A_k}{A_k+a_k} \bx_k + \frac{a_k}{A_k+a_k} \bz_k}$

     \vspace{1mm}
     \STATE Compute 
     $\quad \displaystyle{
	        \bz_{k+1} = \Pi_{\mathcal{X}_k} (\bz_k-a_k \nabla f(\by_k))}$, 
         where $$ \displaystyle{ \mathcal{X}_k \triangleq \{\bz\in \mathcal{Z}: g(\by_k)+\fprod{\nabla g(\by_k),\bz-\by_k}\leq g_k\}}$$

      \vspace{-2mm}
     \STATE Compute    $\quad \displaystyle{ \bx_{k+1} = \frac{A_k}{A_k+a_k} \bx_k+\frac{a_k}{A_k+a_k} \bz_{k+1}}$ 
  \vspace{1mm}
		\STATE Update $A_{k+1} = A_k + a_k$
	\ENDFOR
 
    \STATE \textbf{Return:} $\bx_{K}$ 
	\end{algorithmic}
\end{algorithm}

Inspired by the cutting plane approach in \cite{jiangconditional}, we let the set $\cX_k$ be the intersection of $\cZ$ and a halfspace:
\begin{equation}\label{eq:X_k}
    \mathcal{X}_k \triangleq \{\bz\in \mathcal{Z}: g(\by_k)+\fprod{\nabla g(\by_k),\bz-\by_k}\leq g_k\}.
\end{equation}
Here, the auxiliary sequence $\{g_k\}_{k\geq 0}$ should be selected such that $g_k\geq g^*$ and $g_k \rightarrow g^*$. One straightforward way to generate this sequence is by applying an accelerated projected gradient method to the lower-level objective $g$ separately. The loss function of the iterates generated by this algorithm can be considered as $\{g_k\}$ for the above halfspace. Note that in this case, it is known that 
\begin{equation}\label{eq:convergence_g}
   0\leq  g_k - g^* \leq \frac{2L_g\|\bx_0-\bx^*\|^2}{(k+1)^2},\quad \forall k \geq 0.
\end{equation}
Hence, the above requirements on the sequence $\{g_k\}$ are satisfied. Two remarks on the set $\cX_k$ are in order. 
First, the set $\mathcal{X}_k$ in \eqref{eq:X_k} has an explicit form, and thus computing the Euclidean projection onto $\mathcal{X}_k$ is tractable.
It also can be verified that the set $\mathcal{X}_k$ always contains the lower-level problem solution set $\mathcal{X}_g^*$. To prove this, let $\hat{\bx}^*$ be any point in $\mathcal{X}_g^*$. By using the convexity of $g$, we obtain that $ g(\by_k) + \fprod{\nabla g(\by_k),\hat{\bx}^*-\by_k} \leq g(\hat{\bx}^*)=g^* \leq g_k$. Thus $\hat{\bx}^*$ indeed satisfies both constraints in \eqref{eq:X_k} and we obtain that $\hat{\bx}^* \in \mathcal{X}_k$.  

Now that we have identified an appropriate replacement for the set $\mathcal{X}^*_g$, we can easily implement a variant of the projected accelerated gradient method for the bilevel problem using the surrogate set $\mathcal{X}_k$. We refer to our method as 
the Accelerated Gradient Method for Bilevel Optimization (AGM-BiO) and its steps are outlined in Algorithm~\ref{alg:AGM-BiO}. It is important to note that the iterates, when projected onto the set $\mathcal{X}_k$, may not belong to the set $\mathcal{X}^*_g$, as $\mathcal{X}_k$ is an approximation of the true solution set. Consequently, the iterates might be infeasible. However, the design of $\mathcal{X}_k$ allows us to control the infeasibility of the iterates, as we will demonstrate in the convergence analysis section.

\begin{remark}

The design of the halfspace as specified in \eqref{eq:X_k} should be recognized as a nuanced task. Various alternative formulations of halfspaces could fulfill the same primary conditions,  such as $\{\bz \in \mathbb{R}^d: g(\bx_k) + \langle\nabla g(\bx_k), \bz-\bx_k\rangle \leq g_k\}$ and $\{\bz \in \mathbb{R}^d: g(\bz_k) + \langle\nabla g(\bz_k), \bz-\bz_k\rangle \leq g_k\}$. However, the selection of the gradient at $\by_k$ for constructing the halfspace is not arbitrary but essential as we characterize in the convergence analysis of our method.
\end{remark}
\begin{remark}
While our paper focuses on the smooth setting, our algorithm can be extended to the composite setting with an additional assumption and some proof modifications. Due to limited space, please refer to Section~\ref{sec:extension} in the Appendix for more details.
\end{remark}

\section{Convergence Analysis}\label{sec:analysis}
In this section, we analyze the convergence rate and iteration complexity of our proposed AGM-BiO method for convex simple bilevel optimization problems. We choose the stepsize $a_{k} = (k+1)/(4L_{f})$, which is inspired from our theoretical analysis. The main theorem is as follows,
\begin{theorem}\label{thm:upper_lower}
Suppose Assumption~\ref{ass:1} holds. Let $\{\bx_k\}_{k\geq 0}$ be the sequence of iterates generated by Algorithm~\ref{alg:AGM-BiO} with stepsize $a_k = (k+1)/(4L_f)$ for
$k \geq 0$ and suppose the sequence $g_k$ used for generating the cutting plane satisfies \eqref{eq:convergence_g}. Then, for any $k\geq 0$ we have,
\begin{enumerate}[label=(\roman*)]
\vspace{-2.5mm}
    \item The function suboptimality is bounded above by 
    \begin{equation}
        f(\bx_k)-f(\bx^*) \leq \frac{4L_f \|\bx_0-\bx^*\|^2}{k(k+1)}
         \vspace{-1mm}
    \end{equation}
   
    \item The infeasibility term is bounded above by 
    \begin{equation}
        g(\bx_k)-g(\bx^*) \leq \frac{4L_g\|\bx_0-\bx^*\|^2 (\ln k+1)}{k(k+1)} +\frac{2L_gD^2}{k+1}
         \vspace{-1mm}
    \end{equation}

    \item Furthermore, if the condition $f(\bx_k) \geq f(\bx^*)$ holds, then the infeasibility term is bounded above by
    \begin{equation}
        g(\bx_k)-g(\bx^*) \leq \frac{8L_g\|\bx_0-\bx^*\|^2(\ln k+1)}{k(k+1)}.
    \end{equation}
\end{enumerate}

\vspace{-1.5mm}
% \begin{align*}
%     &f(\bx_k)-f(\bx^*) \leq \frac{4L_f \|\bx_0-\bx^*\|^2}{k(k+1)} \quad \text{and} \\
%     &g(\bx_k)-g(\bx^*) \leq \frac{4L_g\|\bx_0-\bx^*\|^2 (\ln k+1)}{k(k+1)} +\frac{2L_gD^2}{k+1}.
% \end{align*}
% Furthermore, if the condition $f(\bx_k) \geq f(\bx^*)$ holds, then 
% \begin{align*}
%     g(\bx_k)-g(\bx^*) &\leq \frac{12L_g\|\bx_0-\bx^*\|^2(\ln k+1)}{k(k+1)}.
% \end{align*}
\end{theorem}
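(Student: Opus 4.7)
The plan is to derive all three bounds from a single Tseng-style AGM descent inequality, exploiting the key enabling fact, already noted in the text, that $\bx^* \in \mathcal{X}_g^* \subseteq \mathcal{X}_k$ for all $k \geq 0$. This inclusion allows the projection onto $\mathcal{X}_k$ to behave non-expansively with respect to $\bx^*$, exactly as the ideal projection onto $\mathcal{X}_g^*$ would. Part (i) is then the classical suboptimality bound of the projected AGM, part (ii) uses the cutting plane to control infeasibility, and part (iii) sharpens the infeasibility analysis by recycling a residual term from part (i) that would otherwise be discarded.

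For part (i), I would start from $L_f$-smoothness of $f$ at the pair $(\by_k, \bx_{k+1})$, use the identity $\bx_{k+1} - \by_k = \tfrac{a_k}{A_{k+1}}(\bz_{k+1} - \bz_k)$ together with the weight decomposition $A_{k+1}\by_k = A_k \bx_k + a_k \bz_k$, and split $\langle \nabla f(\by_k), \bz_{k+1} - \bz_k\rangle$ as $\langle \nabla f(\by_k), \bz_{k+1} - \bx^*\rangle + \langle \nabla f(\by_k), \bx^* - \bz_k\rangle$. The first piece is bounded by combining the projection inequality at $\bz_{k+1}$ with test point $\bx^* \in \mathcal{X}_k$ and the three-point identity; the second is handled by two applications of convexity, namely $\langle \nabla f(\by_k), \bx^* - \by_k\rangle \leq f^* - f(\by_k)$ and $\langle \nabla f(\by_k), \bx_k - \by_k\rangle \leq f(\bx_k) - f(\by_k)$, after expressing $\bx^* - \bz_k = (\bx^* - \by_k) + \tfrac{A_k}{a_k}(\by_k - \bz_k)$ via the weight identity. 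Collecting terms yields
\begin{equation*}
A_{k+1}(f(\bx_{k+1}) - f^*) + \tfrac{1}{2}\|\bz_{k+1} - \bx^*\|^2 + \tfrac{1}{2}\bigl(1 - \tfrac{L_f a_k^2}{A_{k+1}}\bigr)\|\bz_{k+1} - \bz_k\|^2 \leq A_k(f(\bx_k) - f^*) + \tfrac{1}{2}\|\bz_k - \bx^*\|^2.
\end{equation*}
With $a_k = (k+1)/(4L_f)$, one computes $A_k = k(k+1)/(8L_f)$ and $L_f a_k^2/A_{k+1} = (k+1)/(2(k+2)) \leq 1/2$, so the coefficient of $\|\bz_{k+1} - \bz_k\|^2$ is at least $1/4$. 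Telescoping and discarding non-negative terms gives $A_k(f(\bx_k) - f^*) \leq \tfrac{1}{2}\|\bx_0 - \bx^*\|^2$, which is the claim after substituting $A_k$.

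For part (ii), I would repeat the argument with $g$ in place of $f$, but decompose $\bx_{k+1} - \by_k = \tfrac{A_k}{A_{k+1}}(\bx_k - \by_k) + \tfrac{a_k}{A_{k+1}}(\bz_{k+1} - \by_k)$ instead. The $\bx_k$-piece is handled by convexity of $g$, which gives $g(\by_k) + \langle \nabla g(\by_k), \bx_k - \by_k\rangle \leq g(\bx_k)$, and the $\bz_{k+1}$-piece by the defining halfspace constraint of \eqref{eq:X_k}: since $\bz_{k+1} \in \mathcal{X}_k$, we have $g(\by_k) + \langle \nabla g(\by_k), \bz_{k+1} - \by_k\rangle \leq g_k$. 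This produces
\begin{equation*}
A_{k+1}(g(\bx_{k+1}) - g^*) \leq A_k(g(\bx_k) - g^*) + a_k(g_k - g^*) + \tfrac{L_g a_k^2}{2 A_{k+1}}\|\bz_{k+1} - \bz_k\|^2,
\end{equation*}
after using $\|\bx_{k+1} - \by_k\|^2 = \tfrac{a_k^2}{A_{k+1}^2}\|\bz_{k+1} - \bz_k\|^2$. Telescoping and invoking \eqref{eq:convergence_g} bounds the first cumulative sum by $\tfrac{L_g \|\bx_0 - \bx^*\|^2}{2L_f}(\ln k + 1)$, and the naive estimate $\|\bz_{i+1} - \bz_i\| \leq D$ combined with $a_i^2/A_{i+1} \leq 1/(2L_f)$ bounds the second by $\tfrac{k L_g D^2}{4L_f}$. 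Dividing by $A_k = k(k+1)/(8L_f)$ produces the two terms in the stated bound.

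Part (iii) exploits the fact that the descent inequality of part (i) provides a sharper replacement for the diameter estimate above. Telescoping that inequality and using $f(\bx_k) \geq f^*$ to drop the suboptimality term yields $\sum_{i=0}^{k-1}\|\bz_{i+1} - \bz_i\|^2 \leq 2\|\bx_0 - \bx^*\|^2$. Substituting this into the second sum of part (ii) in place of the $D^2$ estimate replaces $D^2$ by $\|\bx_0-\bx^*\|^2$ and improves that $O(1/k)$ contribution to $O(\ln k/k^2)$, which merges with the first term to produce the claimed $\tfrac{8L_g\|\bx_0-\bx^*\|^2(\ln k + 1)}{k(k+1)}$ bound. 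The main technical obstacle is the assembly of part (i): correctly combining smoothness, convexity, the weight decomposition, the projection inequality, and the three-point identity so as to expose an explicit, provably non-negative $\|\bz_{k+1}-\bz_k\|^2$ coefficient under the prescribed step size. This residual term, routinely discarded in a pure suboptimality proof, is exactly what powers the sharper infeasibility bound in part (iii).
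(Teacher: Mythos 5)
Your proposal is correct and follows essentially the same route as the paper: the same per-iteration potential inequalities for $f$ and $g$ (the paper's Lemma~\ref{lem:potential}), the same telescoping with the diameter bound $\|\bz_{i+1}-\bz_i\|\le D$ for part (ii), and the same recycling of the residual $\sum_{i}\|\bz_{i+1}-\bz_i\|^2\le 2\|\bx_0-\bx^*\|^2$ under $f(\bx_k)\ge f^*$ for part (iii). All the constants and exponents you compute match the paper's.
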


Theorem~\ref{thm:upper_lower} shows the upper-level objective function gap is upper bounded by $\mathcal{O}(1/k^{2})$, which matches the convergence rate of the accelerated gradient method for single-level optimization problems. On the other hand, the suboptimality of the lower-level objective which measures infeasibility for the bilevel problem in the worst case is bounded above by $\mathcal{O}(1/k)$. In cases where the $f(\bx_k)\geq f^*$ this upper bound could be even improved to $\mathcal{O}(1/k^2)$. As a corollary of the worst-case bounds, Algorithm~\ref{alg:AGM-BiO} will return an $(\epsilon_f , \epsilon_g)$-optimal solution after at most the following number of iterations
$\mathcal{O}\left(\max \left\{\frac{1}{\sqrt{\epsilon_{f}}}, \frac{1}{\epsilon_{g}}\right\}\right)$.
We should emphasize that, under the assumptions being considered, this complexity bound represents the best-known bound among all previous works summarized in Table~\ref{sum}. It is worth noting that concurrent work by \cite{samadi2023achieving} achieves $\mathcal{O}(\max \{\frac{1}{\epsilon_{f}}, \frac{1}{\epsilon_{g}}\})$ under similar assumptions. They can only improve their complexity bound with the additional assumption that $g$ satisfies the weak sharpness condition.

\begin{remark}[The necessity of compactness of $\mathcal Z$]\label{remark:why_compact}
For the lower-level objective, we show that $A_k(g(\bx_k)-g(\bx^*)) \leq \sum_{i=0}^{k-1} a_i(g_i-g^*) + \frac{L_g}{4L_f}\sum_{i=0}^{k-1} \|\bz_{i+1}-\bz_i\|^2$ (\eqref{eq:summing_lower_level} in Section~\ref{sec:analysis}). The main challenge in obtaining an accelerated rate of $\mathcal{O}(1/k^2)$ for $g$ is controlling $\sum_{i=0}^{k-1} \|\bz_{i+1}-\bz_i\|^2$. Without a lower bound on $f$, this term cannot be bounded by the upper-level suboptimality alone. If $f(\bx_{k}) \geq f(\bx^{*})$, we can achieve the rate of $\mathcal{O}(1/k^{2})$ for $g$. Otherwise, we use the compactness of $\cZ$ to achieve $\mathcal{O}(1/k)$ for $g$. Please refer to Section~\ref{sec:analysis} for more details.
\end{remark}

\begin{remark}[Removable $\log$ terms]\label{remark:log}
The $\log$ terms in all the complexity results can be removed by choosing the auxiliary sequence $g_k = g_K$ for all $0 \leq k \leq K$, which satisfies the condition \eqref{eq:convergence_g}. This eliminates the $\log$ term in \eqref{eq:summing_lower_level} and all subsequent results. However, this choice of $\{g_{k}\}_{k \geq 0}$ requires predetermining the total number of iterations $K$.
\end{remark}

Since the algorithm's output $\hat{\bx}$ may fall outside the feasible set $\mathcal{X}_{g}^{*}$, the expression $f(\hat{\bx})-f^*$ may not necessarily be non-negative. On the other hand, under the considered assumptions, proving convergence in terms of $|f(\hat{\bx})-f^*|$ is known to be impossible due to a negative result presented by \cite{chen2023bilevel}. Specifically, for any first-order method and a given number of iterations $k$,  they demonstrated the existence of an instance of  Problem~\eqref{eq:bi-simp} where $|f(\bx_{k})-f^*| \geq 1$ for all $k\geq 0$. Thus, to provide any form of guarantee in terms of the absolute value of the suboptimality, i.e., $|f(\hat{\bx})-f^*|$, we need an additional assumption to obtain a lower bound on suboptimality and to provide a convergence bound for $|f(\hat{\bx})-f^*|$. We will address this point in the following section.

\subsection{Convergence under Hölderian Error Bound}
In this section, we introduce an additional regularity condition on $g$ to establish a lower bound for  $f(\hat{\bx})-f^*$. Formally, we assume that the lower-level objective function, $g$, satisfies the Hölderian Error Bound condition. This condition governs how the objective value $g(\bx)$ grows as the point $\bx$ moves away from the optimal solution set $\cX_{g}^*$. Intuitively, since our method's output $\hat{\bx}$ is $\epsilon_g$-optimal for the lower-level problem, it should be close to the optimal solution set $\cX_{g}^*$ when this regularity condition on $g$ is met.  Consequently, we can utilize this proximity to establish a lower bound for $f(\hat{\bx})-f^*$  by leveraging the smoothness property of  $f$. 

\begin{assumption}\label{ass:holderian}
    The function $g$ satisfies the Hölderian error bound for some $\alpha>0$ and $r \geq 1$, i.e,
    \begin{equation}\label{eq:holderian}
    \frac{\alpha}{r} \operatorname{dist}\left(\mathbf{x}, \mathcal{X}_g^*\right)^r \leq g(\mathbf{x})-g^*, \quad \forall \mathbf{x} \in \mathcal{Z},
    \end{equation}
    where $\operatorname{dist}\left(\mathbf{x}, \mathcal{X}_g^*\right) \triangleq \inf _{\mathbf{x}^{\prime} \in \mathcal{X}_g^*}\left\|\mathbf{x}-\mathbf{x}^{\prime}\right\|$.
\end{assumption}

We note that the Hölderian error bound condition in (\ref{eq:holderian}) is well-studied in the optimization literature \cite{pang1997error,bolte2017error,roulet2017sharpness} and is known to hold in general when function $g$ is analytic and the set $\cZ$ is bounded \cite{luo1994error}. There are two important special cases of the Hölderian error bound condition: 1) $g$ satisfies (\ref{eq:holderian}) with $r = 1$ known as the weak sharpness condition \cite{burke1993weak,burke2005weak}; 2) $g$ satisfies (\ref{eq:holderian}) with $r = 2$ known as the quadratic functional growth condition \cite{drusvyatskiy2018error}.

{By using the Hölderian error bound condition, \citet{jiangconditional} established a stronger relation between suboptimality and infeasibility, as shown in the following proposition.} 

\begin{proposition}[{\cite[Proposition 1]{jiangconditional}}]\label{pp:holderian}
    Assume that $f$ is convex and $g$ satisfies Assumption~\ref{ass:holderian}, and define $M=\max _{\mathbf{x} \in \mathcal{X}_g^*}\|\nabla f(\mathbf{x})\|_*$. Then  
    % that satisfies $g(\hat{\mathbf{x}})-g^* \leq \epsilon_g$,
    it holds that
    $f(\hat{\mathbf{x}})-f^* \geq -M(\frac{r (g(\hat{\mathbf{x}})-g^*)}{\alpha})^{\frac{1}{r}}$ for any $\hat{\mathbf{x}} \in \mathcal{Z}$.
    % $f(\hat{\mathbf{x}})-f^* \geq -M\operatorname{dist}\left(\hat{\mathbf{x}}, \mathcal{X}_g^*\right) \geq-M\left(\frac{r}{\alpha}(g(\hat{\bx})-g(\bx^*))\right)^{\frac{1}{r}} \geq-M\left(\frac{r \epsilon_g}{\alpha}\right)^{\frac{1}{r}}$.

\end{proposition}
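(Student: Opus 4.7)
\textbf{Proof plan for Proposition~\ref{pp:holderian}.} The plan is to reduce the bound to a statement about how far $\hat{\mathbf{x}}$ can be from the feasible set $\mathcal{X}_g^*$, and then invoke the Hölderian error bound to convert that distance into an infeasibility gap on $g$. Concretely, I will produce a point $\bar{\mathbf{x}} \in \mathcal{X}_g^*$ that is nearby $\hat{\mathbf{x}}$, compare $f(\hat{\mathbf{x}})$ with $f(\bar{\mathbf{x}})$ via convexity, and then use that $\bar{\mathbf{x}}$ is feasible for \eqref{eq:bi-simp} to absorb $f(\bar{\mathbf{x}}) - f^{*} \geq 0$.

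\textbf{Step 1: Projection.} Since by Assumption~\ref{ass:1} the set $\mathcal{X}_g^{*}$ is nonempty, closed and convex, I can pick $\bar{\mathbf{x}} \in \mathcal{X}_g^{*}$ attaining $\|\hat{\mathbf{x}} - \bar{\mathbf{x}}\| = \mathrm{dist}(\hat{\mathbf{x}}, \mathcal{X}_g^{*})$. Because $\bar{\mathbf{x}}$ lies in $\mathcal{X}_g^{*}$, it is feasible for problem \eqref{eq:bi-simp}, so $f(\bar{\mathbf{x}}) \geq f^{*}$.

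\textbf{Step 2: Convexity bound.} Convexity of $f$ at the anchor $\bar{\mathbf{x}}$ gives $f(\hat{\mathbf{x}}) \geq f(\bar{\mathbf{x}}) + \langle \nabla f(\bar{\mathbf{x}}), \hat{\mathbf{x}} - \bar{\mathbf{x}} \rangle$. Combining with Step 1 yields
\begin{equation*}
f(\hat{\mathbf{x}}) - f^{*} \;\geq\; \langle \nabla f(\bar{\mathbf{x}}), \hat{\mathbf{x}} - \bar{\mathbf{x}} \rangle \;\geq\; -\|\nabla f(\bar{\mathbf{x}})\|_{*}\,\|\hat{\mathbf{x}} - \bar{\mathbf{x}}\|,
\end{equation*}
where the last step is the generalized Cauchy--Schwarz inequality for dual norms. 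Since $\bar{\mathbf{x}} \in \mathcal{X}_g^{*}$, the definition of $M$ bounds $\|\nabla f(\bar{\mathbf{x}})\|_{*} \leq M$, and by construction $\|\hat{\mathbf{x}} - \bar{\mathbf{x}}\| = \mathrm{dist}(\hat{\mathbf{x}}, \mathcal{X}_g^{*})$, so $f(\hat{\mathbf{x}}) - f^{*} \geq -M\,\mathrm{dist}(\hat{\mathbf{x}}, \mathcal{X}_g^{*})$.

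\textbf{Step 3: Invoke the Hölderian error bound.} Rearranging the Hölderian error bound \eqref{eq:holderian} gives $\mathrm{dist}(\hat{\mathbf{x}}, \mathcal{X}_g^{*}) \leq \bigl(r(g(\hat{\mathbf{x}}) - g^{*})/\alpha\bigr)^{1/r}$, and substituting into the bound from Step 2 yields exactly the claimed inequality. I don't anticipate a genuine obstacle here: the only subtle point is making sure the Cauchy--Schwarz step is stated with the correct primal/dual norm pairing (so that $\|\nabla f(\bar{\mathbf{x}})\|_{*}$ and not $\|\nabla f(\bar{\mathbf{x}})\|$ appears, matching how $M$ is defined), and that $M$ is well-defined, which is guaranteed because $\nabla f$ is continuous and $\mathcal{X}_g^{*}$ is compact under Assumption~\ref{ass:1}.
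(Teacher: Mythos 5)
Your proof is correct and is essentially the argument behind this result: the paper itself does not reprove Proposition~\ref{pp:holderian} but imports it from \cite{jiangconditional}, whose proof follows exactly your route (project $\hat{\mathbf{x}}$ onto $\mathcal{X}_g^*$, lower-bound $f(\hat{\mathbf{x}})-f(\bar{\mathbf{x}})$ by convexity and the dual-norm Cauchy--Schwarz inequality, use $f(\bar{\mathbf{x}})\geq f^*$, then convert the distance via the H\"olderian error bound). Your attention to the primal/dual norm pairing and to the existence of the projection and of $M$ via compactness of $\mathcal{X}_g^*$ is exactly the right level of care here.
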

Hence, under Assumption~\ref{ass:holderian}, Proposition~\ref{pp:holderian} shows that the suboptimality  $f({\hat{\bx}})-f^*$ can also be bounded from below when $\hat{\bx}$ is an approximate solution of the lower-level problem. As a result,  we can establish a convergence bound on $|f(\bx_k)-f^*|$ by combining Proposition~\ref{pp:holderian} with the upper bounds in Theorem~\ref{thm:upper_lower}. 
% {\color{red}{
% reference for the proof??? and  
% more text}}
% As mentioned earlier, with the Hölderian error bound assumption on $g$, we can lower bound the suboptimality by leveraging Proposition~\ref{pp:holderian}. 
Moreover, it also allows us to improve the convergence rate for the lower-level problem. To prove this claim, we first introduce the following lemma which establishes an upper bound on the weighted sum of upper and lower-level objectives.
\begin{lemma}\label{lm:weighted_sum}
Suppose conditions (ii) and (iii) in  Assumption~\ref{ass:1} hold. Let $\{\bx_k\}$ be the sequence of iterates generated by Algorithm~\ref{alg:AGM-BiO} with stepsize
    $a_k = \gamma(k+1)/(4L_f)$, where $0<\gamma\leq1$. Moreover, suppose the sequence $g_k$ used for generating the cutting plane satisfies satisfies \eqref{eq:convergence_g}. Then, for any $\lambda \geq \frac{L_g}{(2/\gamma-1)L_f}$ and $k\geq 0$ we have 
    \begin{equation}\label{eq:weighted_sum}
    \begin{aligned}
       \lambda(f(\bx_{k})-f(\bx^*)) + g(\bx_{k})-g(\bx^*) 
       \leq \frac{4L_g\|\bx_0-\bx^*\|^2(\ln k+1)}{k(k+1)}   + \frac{4\lambda L_{f}\|\bx_{0}-\bx^*\|^2}{\gamma k(k+1)}
    \end{aligned}
    \end{equation}
\end{lemma}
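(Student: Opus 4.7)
The lemma is essentially a ``weighted combination'' of the upper- and lower-level analyses that were sketched for Theorem~\ref{thm:upper_lower}, performed in a way that lets a slack in the upper-level accelerated recursion absorb the problematic $\sum_i \|\bz_{i+1}-\bz_i\|^2$ term which, in the unweighted analysis, forced us to invoke the compactness of $\cZ$ (see Remark~\ref{remark:why_compact}). My plan is therefore: (a) derive a one-step potential decrease for $f$ that retains the $\|\bz_{k+1}-\bz_k\|^2$ slack explicitly; (b) derive a one-step recursion for $g$ that only costs a $\|\bz_{k+1}-\bz_k\|^2$ term; (c) take a $\lambda$-weighted sum, tune $\lambda$ so the squared-difference terms cancel; (d) plug in the rate \eqref{eq:convergence_g} on $g_k-g^*$ and compute a harmonic sum.

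\textbf{Upper-level recursion with slack.} Using the identities $A_{k+1}(\bx_{k+1}-\by_k)=a_k(\bz_{k+1}-\bz_k)$ and $A_k(\bx_k-\by_k)=a_k(\by_k-\bz_k)$, convexity of $f$ at $\bx_k$ and at $\bx^*$ gives, after weighting by $A_k$ and $a_k$, the inequality
\[
A_{k+1}(f(\by_k)-f^*)\leq A_k(f(\bx_k)-f^*)-a_k\langle\nabla f(\by_k),\bx^*-\bz_k\rangle.
\]
Combining this with the $L_f$-smoothness upper bound on $f(\bx_{k+1})$ expanded at $\by_k$, and using the projection inequality $a_k\langle\nabla f(\by_k),\bz_{k+1}-\bx^*\rangle\leq \tfrac12\|\bz_k-\bx^*\|^2-\tfrac12\|\bz_{k+1}-\bx^*\|^2-\tfrac12\|\bz_{k+1}-\bz_k\|^2$ (valid because $\bx^*\in\cX_g^*\subseteq\cX_k$), I obtain
\[
A_{k+1}(f(\bx_{k+1})-f^*)+\tfrac12\|\bz_{k+1}-\bx^*\|^2+\tfrac{C_k}{2}\|\bz_{k+1}-\bz_k\|^2\leq A_k(f(\bx_k)-f^*)+\tfrac12\|\bz_k-\bx^*\|^2,
\]
where $C_k = 1-\tfrac{L_f a_k^2}{A_{k+1}}=1-\tfrac{\gamma(k+1)}{2(k+2)}\geq \tfrac{2-\gamma}{2}$ for the chosen stepsize. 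Telescoping yields $A_k(f(\bx_k)-f^*)+\tfrac{2-\gamma}{4}\sum_{i=0}^{k-1}\|\bz_{i+1}-\bz_i\|^2\leq \tfrac12\|\bx_0-\bx^*\|^2$.

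\textbf{Lower-level recursion.} Since $\bz_{k+1}\in\cX_k$, convexity of $g$ at $\bx_k$ and the cutting-plane inequality at $\bz_{k+1}$, weighted by $A_k$ and $a_k$, give $A_{k+1}(g(\by_k)-g^*)+a_k\langle\nabla g(\by_k),\bz_{k+1}-\bz_k\rangle \leq A_k(g(\bx_k)-g^*)+a_k(g_k-g^*)$. Adding the $L_g$-smoothness bound for $g(\bx_{k+1})$ and using $\bx_{k+1}-\by_k=\tfrac{a_k}{A_{k+1}}(\bz_{k+1}-\bz_k)$ produces
\[
A_{k+1}(g(\bx_{k+1})-g^*)\leq A_k(g(\bx_k)-g^*)+a_k(g_k-g^*)+\tfrac{L_g a_k^2}{2A_{k+1}}\|\bz_{k+1}-\bz_k\|^2,
\]
and $\tfrac{L_g a_k^2}{A_{k+1}}\leq \tfrac{\gamma L_g}{2 L_f}$. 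Telescoping gives $A_k(g(\bx_k)-g^*)\leq \sum_{i=0}^{k-1}a_i(g_i-g^*)+\tfrac{\gamma L_g}{4L_f}\sum_{i=0}^{k-1}\|\bz_{i+1}-\bz_i\|^2$.

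\textbf{Weighted combination and final bound.} Multiplying the upper-level telescoped bound by $\lambda$ and adding the lower-level telescoped bound, the coefficient of $\sum_{i=0}^{k-1}\|\bz_{i+1}-\bz_i\|^2$ becomes $\tfrac{\gamma L_g}{4L_f}-\tfrac{\lambda(2-\gamma)}{4}$, which is nonpositive precisely under the assumption $\lambda\geq \tfrac{L_g}{(2/\gamma-1)L_f}$, so the squared-difference terms can be dropped. Dividing by $A_k=\tfrac{\gamma k(k+1)}{8 L_f}$ yields
\[
\lambda(f(\bx_k)-f^*)+g(\bx_k)-g^*\leq \tfrac{4\lambda L_f\|\bx_0-\bx^*\|^2}{\gamma k(k+1)}+\tfrac{1}{A_k}\sum_{i=0}^{k-1}a_i(g_i-g^*).
\]
Inserting \eqref{eq:convergence_g} together with $a_i=\tfrac{\gamma(i+1)}{4L_f}$ makes $a_i(g_i-g^*)\leq \tfrac{\gamma L_g\|\bx_0-\bx^*\|^2}{2L_f(i+1)}$; the harmonic sum $\sum_{i=0}^{k-1}\tfrac{1}{i+1}\leq \ln k+1$ and another division by $A_k$ produce the $\tfrac{4L_g\|\bx_0-\bx^*\|^2(\ln k+1)}{k(k+1)}$ term, matching \eqref{eq:weighted_sum}.

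\textbf{Main obstacle.} The only delicate step is the identification of $\lambda_{\min}=\tfrac{L_g}{(2/\gamma-1)L_f}$: one must carefully track both the slack $C_k\geq\tfrac{2-\gamma}{2}$ from the accelerated upper-level analysis and the inflation factor $\tfrac{\gamma L_g}{2L_f}$ from the lower-level smoothness so that the two squared-difference terms cancel exactly. Everything else reduces to bookkeeping on $\{A_k\}$ and a standard harmonic-sum estimate.
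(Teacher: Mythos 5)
Your proposal is correct and follows essentially the same route as the paper: the paper's Lemma~\ref{lem:potential} supplies exactly your two one-step recursions (with the $\|\bz_{k+1}-\bz_k\|^2$ slack for $f$ and the $a_k(g_k-g^*)+\frac{L_g a_k^2}{2A_{k+1}}\|\bz_{k+1}-\bz_k\|^2$ cost for $g$), and the paper likewise takes the $\lambda$-weighted sum, cancels the squared-difference terms via $\lambda \geq \frac{L_g}{(2/\gamma-1)L_f}$, and finishes with the same harmonic-sum estimate from \eqref{eq:convergence_g}. The only cosmetic difference is that you telescope each recursion before combining, whereas the paper combines per step and then telescopes; the constants and the threshold for $\lambda$ work out identically.
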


This result characterizes and upper bound of $\mathcal{O}(1/k^2)$ on the expression $\lambda(f(\bx_{k})-f(\bx^*)) + g(\bx_{k})-g(\bx^*)$. That said, the first term in this expression, a.k.a., $\lambda(f(\bx_{k})-f(\bx^*))$ may not be non-negative for a bilevel problem as discussed earlier. Hence, we cannot simply eliminate $\lambda(f(\bx_{k})-f(\bx^*))$ to show an upper bound of $\mathcal{O}(1/k^2)$ on infeasibility, a.k.a., $g(\bx_{k})-g(\bx^*)$.
Instead, we leverage the Hölderian error bound on $g$ and apply Proposition~\ref{pp:holderian} to the first term. As a result, we can eliminate the dependence on $f$ in \eqref{eq:weighted_sum}. In this case, we can establish an upper bound on infeasibility.

\begin{theorem}\label{thm:upper_lower_hd}
Suppose conditions (ii) and (iii) in Assumption~\ref{ass:1} hold and the lower-level function $g$ satisfies the Hölderian error bound with $r\!> \!1$. Let $\{\bx_k\}$ be the iterates generated by Algorithm~\ref{alg:AGM-BiO} with stepsize $a_k = \gamma\frac{k+1}{4L_f}$, where $\gamma = 1/(\frac{2L_g}{L_f}T^{\frac{2r-2}{2r-1}}+2)$ and $T$ is the total number of iterations that we run the algorithm. Moreover, suppose the sequence $g_k$ used for generating the cutting plane satisfies \eqref{eq:convergence_g}. If we define the constants $C_f \triangleq 8 L_f \|\bx_0-\bx^*\|^2$, $C_g \triangleq 12 L_g \|\bx_0-\bx^*\|^2$ and  $C \triangleq M(\frac{r}{\alpha})^{\frac{1}{r}}$, where $M \triangleq \max _{\mathbf{x} \in \mathcal{X}_g^*}\|\nabla f(\mathbf{x})\|$, $\alpha$ and $r$ are the parameters in Assumption~\ref{ass:holderian}, then the following results hold:
    \begin{enumerate}[label=(\roman*)]
    \vspace{-3mm}
        \item The function suboptimality is bounded above by \begin{equation}\label{eq:with_holderian_r>1}
            f(\bx_T)-f(\bx^*) \leq \frac{C_g (\ln T + 1)}{T^{2r/(2r-1)}} + \frac{C_f}{T^{2}}
        \end{equation}

            \vspace{-3mm}

        \item The function suboptimality is bounded below by 
        \begin{align}
            f(\bx_T) - f(\bx^*) 
            \geq\!- C\max\!\left\{\!\frac{(2C_{\!g} (\ln T\! +\! 1))^{\frac{1}{r}}}{T^{\frac{2}{2r-1}}}\! +\! \frac{(2C_{\!f})^{\frac{1}{r}}}{T^{\frac{2}{r}}} ,\frac{(2C)^{\frac{1}{r-1}}}{T^{\frac{2}{2r-1}}}\!\right\}   \nonumber
        \end{align}

            \vspace{-3mm}

        \item The infeasibility term is bounded above by 
        \begin{equation}
        \begin{aligned}
            g(\bx_T)-g(\bx^*) 
            \leq \max \left\{\frac{2C_g (\ln T + 1)}{T^{2r/(2r-1)}} + \frac{2C_f}{T^{2}},\frac{(2C)^{\frac{r}{r-1}}}{T^{\frac{2r}{2r-1}}}\right\}
        \end{aligned}
        \end{equation}
    \end{enumerate}
% \begin{equation}\label{eq:with_holderian_r>1}
%     f(\bx_K)-f(\bx^*) \leq \mathcal{O}(K^{-\frac{2r}{2r-1}}) \quad \text{and} \quad g(\bx_K)-g(\bx^*) \leq \mathcal{O}(K^{-\frac{2r}{2r-1}}).
% \end{equation}
\end{theorem}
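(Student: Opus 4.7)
The proof will rely on Lemma~\ref{lm:weighted_sum} (the weighted-sum inequality) together with Proposition~\ref{pp:holderian} (the H\"olderian lower bound on $f$). My plan is to instantiate Lemma~\ref{lm:weighted_sum} with the specific multiplier $\lambda = 1/T^{(2r-2)/(2r-1)}$, and to verify, by substituting the prescribed $\gamma = 1/(\frac{2L_g}{L_f}T^{(2r-2)/(2r-1)}+2)$, that this $\lambda$ satisfies the admissibility constraint $\lambda \geq L_g/((2/\gamma-1)L_f)$. Writing $s := (2r-2)/(2r-1)$, so that $2-s = 2r/(2r-1)$, straightforward algebra based on $\frac{1}{\gamma T(T+1)} \leq \frac{2L_g/L_f}{T^{2-s}} + \frac{2}{T^2}$ will show that the right-hand side of the weighted-sum inequality is at most $A := \frac{C_g(\ln T + 1)}{T^{2r/(2r-1)}} + \frac{C_f}{T^2}$. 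Since $\bx_T \in \cZ$ and hence $g(\bx_T) - g^* \geq 0$, dropping the infeasibility term on the left-hand side and dividing by $\lambda$ will yield part~(i) directly.

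For part~(iii) I would proceed by case analysis on the sign of $f(\bx_T) - f^*$. If $f(\bx_T) - f^* \geq 0$, the weighted-sum bound immediately yields $g(\bx_T) - g^* \leq A \leq 2A$, matching the first branch of the maximum. Otherwise, Proposition~\ref{pp:holderian} with $C = M(r/\alpha)^{1/r}$ gives $-\lambda (f(\bx_T) - f^*) \leq \lambda C\,(g(\bx_T) - g^*)^{1/r}$, producing the self-referential inequality $u \leq A + \lambda C\, u^{1/r}$ with $u := g(\bx_T) - g^*$. The main technical step is a standard dichotomy: either $u \leq 2A$, or $u > 2A$, in which case $\lambda C\, u^{1/r} > u/2$ and hence $u^{1-1/r} < 2\lambda C$, giving $u < (2\lambda C)^{r/(r-1)}$. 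Substituting $\lambda = 1/T^s$ and using the identity $sr/(r-1) = 2r/(2r-1)$ yields $(2\lambda C)^{r/(r-1)} = (2C)^{r/(r-1)}/T^{2r/(2r-1)}$, the second branch of the maximum.

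Part~(ii) will then follow by invoking Proposition~\ref{pp:holderian} once more, $f(\bx_T) - f^* \geq -C\,(g(\bx_T) - g^*)^{1/r}$, and plugging in the upper bound on $g(\bx_T)-g^*$ from part~(iii). Pushing the exponent $1/r$ inside the outer maximum and applying subadditivity $(a+b)^{1/r} \leq a^{1/r}+b^{1/r}$ (valid for $r \geq 1$) to the first branch converts $\frac{2C_g(\ln T+1)}{T^{2r/(2r-1)}} + \frac{2C_f}{T^2}$ into the two-term expression $\frac{(2C_g(\ln T+1))^{1/r}}{T^{2/(2r-1)}} + \frac{(2C_f)^{1/r}}{T^{2/r}}$ that appears in the stated lower bound.

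The main obstacle I anticipate is the exponent bookkeeping: the unusual choice $\gamma \propto T^{-s}$ is engineered so that $\frac{1}{\gamma T(T+1)}$ decomposes cleanly into a $T^{-2r/(2r-1)}$ contribution and a $T^{-2}$ remainder, and these exponents must line up exactly with those arising from $(2\lambda C)^{r/(r-1)}$. The constants $C_f$ and $C_g$ must absorb several numerical factors, for instance collapsing a coefficient such as $4\ln T + 12$ into $12(\ln T + 1)$ using $T \geq 1$, and these details are tedious but routine. Apart from that, the dichotomy argument for solving the self-referential inequality $u \leq A + \lambda C u^{1/r}$ is the one genuinely nontrivial step, and is precisely what produces the outer $\max$ appearing in the infeasibility bound.
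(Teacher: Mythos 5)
Your proposal follows essentially the same route as the paper's proof: instantiate Lemma~\ref{lm:weighted_sum} with $\lambda = T^{-(2r-2)/(2r-1)}$ (checking admissibility against the prescribed $\gamma$), invoke Proposition~\ref{pp:holderian}, and resolve the self-referential inequality $u \le A + \lambda C u^{1/r}$ via the same two-case dichotomy that produces the outer $\max$ in parts~(ii) and~(iii). The one small imprecision is in part~(i): you must divide the weighted-sum bound by $\lambda$ \emph{before} collapsing constants (bounding the right-hand side by $A$ first and then dividing by $\lambda$ would only give the weaker $A\,T^{(2r-2)/(2r-1)}$), but this is a trivial reordering and the rest of your exponent bookkeeping matches the paper's.
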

Before unfolding this result, we would like to highlight that unlike the result in Theorem~\ref{thm:upper_lower}, the above bounds in Theorem~\ref{thm:upper_lower_hd} do not require the feasible set to be compact. Since $r>1$, the first result shows $f(\bx_T)-f(\bx^*)$  has an upper bound of $\mathcal{O}((\frac{1}{T})^{\frac{2r}{2r-1}})$
and the second result guarantees a lower bound of $-\mathcal{O}((\frac{1}{T})^{\frac{2}{2r-1}})$. These two bounds together lead to an upper bound of $\mathcal{O}((\frac{1}{T})^{\frac{2}{2r-1}})$ for the absolute error $|f(\bx_T)-f(\bx^*)|$. Moreover, the third result implies that the lower-level problem suboptimality which measures infeasibility is bounded above by $\mathcal{O}((\frac{1}{T})^{\frac{2r}{2r-1}})$.  

% \begin{remark}
% An important class of machine learning problems, satisfying Assumption~\ref{ass:holderian} with $r=2$, can be described as $\min_{x\in \mathcal Z} h(Ax)+c^\top x$, where $A\in\mathbb R^{n\times n}$ is a non-zero matrix, $c\in \mathbb R^{n}$, $h:\mathbb R^n \to \mathbb R$ is a convex function with a unique minimizer, and $\mathcal Z$ is a polyhedral set (refer to \cite{necoara2019linear}). In this context, our proposed method achieves absolute convergence guarantees of $\mathcal O(1/\epsilon^{3/2})$ for suboptimality and $\mathcal O(1/\epsilon^{3/4})$ for infeasibility. %In contrast, R-APM \cite{samadi2023achieving} lacks any absolute convergence guarantees in this scenario.
% %An important class of problems often arise in machine learning satisfying Assumption~\ref{ass:holderian} with $r=2$ can be described as $\min_{x\in \mathcal Z} h(Ax)+c^\top x$ where $A\in\mathbb R^{n\times n}$ is a non-zero matrix, $c\in \mathbb R^{n}$, $h:\mathbb R^n \to \mathbb R$ is a convex function with a unique minimizer, and $\mathcal Z$ is a polyhedral set -- see \cite{necoara2019linear}. In this setting, our proposed method in this paper achieves the absolute convergence guarantee of $\mathcal O(1/\epsilon^{3/2})$ and $\mathcal O(1/\epsilon^{3/4})$ in terms of suboptimality and infeasibility, respectively, which is appearing for the first time in the literature, to the best of our knowledge. In contrast, R-APM \cite{samadi2023achieving} lacks any absolute convergence guarantees in this setting. }
% \end{remark}

The previous result presented in Theorem~\ref{thm:upper_lower_hd} is applicable when $r>1$. However, for the case that $1$st-order Hölderian error bound condition on $g$ holds (i.e., weak sharpness condition), we require a distinct analysis and a different choice of $\gamma$ to achieve the tightest bounds. In the subsequent theorem, we present our findings for this specific scenario.

\begin{theorem}\label{pp:upper_lower_ws}
Suppose conditions (ii) and (iii) in Assumption~\ref{ass:1} are met and that the lower-level objective function $g$ satisfies the Hölderian error bound with $r = 1$. Let $\{\bx_k\}$ be the sequence of iterates generated by Algorithm~\ref{alg:AGM-BiO} with stepsize $a_k = \gamma\frac{k+1}{4L_f}$, where  $0<\gamma\leq \min\{\frac{2\alpha L_f}{2ML_g + \alpha L_f},1\}$. Moreover, suppose the sequence $g_k$ used for generating the cutting plane satisfies \eqref{eq:convergence_g}, and recall $M \triangleq \max _{\mathbf{x} \in \mathcal{X}_g^*}\|\nabla f(\mathbf{x})\|$ and $\alpha$ in Assumption~\ref{ass:holderian}. If we define the constants $C_f 
 \triangleq 4L_f\|\bx_{0}-\bx^*\|^2$ and  $C_g \triangleq 8L_g\|\bx_0-\bx^*\|^2$, then for any $k\geq 0$:
\vspace{-3mm}
 \begin{enumerate}[label=(\roman*)]
 \item The function suboptimality is bounded above by
 \begin{equation}
      f(\bx_{k})-f(\bx^*) \leq \frac{C_f}{\gamma k(k+1)}.
 \end{equation}

 \vspace{-3mm}
 \item The function suboptimality is bounded below by
  \begin{equation}
    f(\bx_k) - f(\bx^*)\geq -\frac{C_gM(\ln k+1)}{\alpha k(k+1)}  -\frac{C_f}{\gamma k(k+1)}.
    \end{equation}

    \vspace{-3mm}
 \item The infeasibility term is bounded above by
   \begin{equation}
        g(\bx_{k})-g(\bx^*) \leq \frac{C_g(\ln k+1)}{k(k+1)}   + \frac{\alpha C_f}{\gamma Mk(k+1)}.
    \end{equation}
 \end{enumerate}
\end{theorem}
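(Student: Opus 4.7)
The plan is to obtain the three estimates by combining the weighted-sum bound of Lemma~\ref{lm:weighted_sum} with the one-sided sharpness inequality from Proposition~\ref{pp:holderian} specialized to $r=1$, using a carefully chosen value of the free parameter $\lambda$.

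I would prove (i) first, since it does not rely on the H\"olderian condition. It is a direct adaptation of part~(i) of Theorem~\ref{thm:upper_lower} to the $\gamma$-scaled stepsize $a_k=\gamma(k+1)/(4L_f)$. With this stepsize, $A_k = \sum_{i=0}^{k-1} a_i = \gamma k(k+1)/(8L_f)$, and running the same Tseng-style AGM argument, which only uses the smoothness and convexity of $f$ together with $\bx^*\in\cX_g^*\subseteq \cX_k$, yields $A_k(f(\bx_k)-f(\bx^*)) \leq \tfrac{1}{2}\|\bx_0-\bx^*\|^2$. Rearranging gives $f(\bx_k)-f^* \leq 4L_f\|\bx_0-\bx^*\|^2/(\gamma k(k+1))=C_f/(\gamma k(k+1))$, as stated.

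Next I would derive (iii). The key step is to invoke Lemma~\ref{lm:weighted_sum} with the specific choice $\lambda = \alpha/(2M)$. For $r=1$, Proposition~\ref{pp:holderian} reads $f(\bx_k)-f^* \geq -(M/\alpha)(g(\bx_k)-g^*)$, so multiplying by $\lambda=\alpha/(2M)$ produces $\lambda(f(\bx_k)-f^*)\geq -\tfrac{1}{2}(g(\bx_k)-g^*)$, equivalently
\[
\lambda(f(\bx_k)-f^*) + g(\bx_k)-g^* \;\geq\; \tfrac{1}{2}(g(\bx_k)-g^*).
\]
Chaining this lower bound with the right-hand side of \eqref{eq:weighted_sum} and multiplying through by $2$ gives
\[
g(\bx_k)-g^* \;\leq\; \frac{8L_g\|\bx_0-\bx^*\|^2(\ln k+1)}{k(k+1)} + \frac{4\alpha L_f\|\bx_0-\bx^*\|^2}{\gamma M\, k(k+1)},
\]
which, after substituting the definitions of $C_f$ and $C_g$, is exactly the bound claimed in (iii). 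What remains is to verify that $\lambda=\alpha/(2M)$ satisfies the admissibility condition $\lambda \geq L_g/((2/\gamma-1)L_f)$ required by Lemma~\ref{lm:weighted_sum}; solving this inequality for $\gamma$ yields precisely $\gamma \leq 2\alpha L_f/(2ML_g+\alpha L_f)$, which is the reason for the stated upper bound on $\gamma$ (the $\gamma \leq 1$ factor comes from the stepsize requirement already used in the AGM analysis).

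Finally, (ii) is immediate: applying Proposition~\ref{pp:holderian} with $r=1$ gives $f(\bx_k)-f^*\geq -(M/\alpha)(g(\bx_k)-g^*)$, and plugging in the upper bound from (iii) yields the claimed lower bound. The main technical obstacle is really the calibration of $\lambda$: if $\lambda$ is chosen too small, the sharpness trick absorbs too little of $g(\bx_k)-g^*$ on the left-hand side of \eqref{eq:weighted_sum}; if $\lambda$ is chosen too large, the admissibility condition of Lemma~\ref{lm:weighted_sum} forces $\gamma$ to be prohibitively small and degrades the rate. The value $\lambda=\alpha/(2M)$ is exactly the sweet spot that balances these two constraints and produces the clean $\mathcal{O}(1/k^2)$-type rates stated in the theorem.
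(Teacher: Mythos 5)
Your proposal is correct and follows essentially the same route as the paper's own proof: part (i) via the $\gamma$-scaled potential argument of Lemma~\ref{lem:potential}, part (iii) by applying Lemma~\ref{lm:weighted_sum} with $\lambda=\alpha/(2M)$ and Proposition~\ref{pp:holderian} at $r=1$ (including the same verification that this $\lambda$ is admissible exactly when $\gamma \leq 2\alpha L_f/(2ML_g+\alpha L_f)$), and part (ii) by feeding the bound from (iii) back into the sharpness inequality. No gaps.
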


Theorem~\ref{pp:upper_lower_ws} shows that under the Hölderian error bound with $r = 1$, also known as weak sharpness condition, the absolute value of the function suboptimality $|f(\bx_k) - f(\bx^*)|$  approaches zero at a rate of $\mathcal{O}(1/k^2)$ -- ignoring the log term. The lower-level error $g(\bx_{k})-g(\bx^*) $, capturing the infeasibility of the iterates, also approaches zero at a rate of $\mathcal{O}(1/k^2)$. 
As a corollary, Algorithm~\ref{alg:AGM-BiO} returns an $(\epsilon_f , \epsilon_g)$-absolute optimal solution after $
    \mathcal{O}(\max \{\frac{1}{\sqrt{\epsilon_{f}}}, \frac{1}{\sqrt{\epsilon_{g}}}\})$ iterations.
This iteration complexity matches the result in a concurrent work \cite{samadi2023achieving} under similar assumptions.

\section{Numerical Experiments}\label{sec:experiment}

In this section, we evaluate our AGM-BiO method on two different bilevel problems using real and synthetic datasets. We compare its runtime and iteration count with other methods,  including a-IRG \cite{kaushik2021method}, CG-BiO \cite{jiangconditional}, Bi-SG \cite{merchav2023convex}, SEA \cite{shen2023online}, R-APM \cite{samadi2023achieving}, PB-APG \cite{chen2024penalty}, and Bisec-BiO \cite{wang2024near}. %Note that some of these methods are not directly applicable to our problem setting and lack convergence guarantees.

% \begin{figure*}[h]
%   \centering
%     \vspace{-5mm}
%   \subfloat[Infeas. vs. time]
% {\includegraphics[width=0.24\textwidth]{}}
%   \vspace{0mm}
%   \subfloat[Subopt. vs. time]
%   {\includegraphics[width=0.24\textwidth]{}}
%   \vspace{0mm}
%   \subfloat[Infeas. vs. iterations]
% {\includegraphics[width=0.24\textwidth]{}}
%   \vspace{0mm}
%   \subfloat[Subopt. vs. iterations]
%   {\includegraphics[width=0.24\textwidth]{}}
%   \vspace{0mm}
% %   \subfloat[Test error]
% % {\includegraphics[width=0.3\textwidth]{OR_T1.eps}}
%   \caption{Comparison of a-IRG, CG-BiO, Bi-SG, SEA, R-APM, and AGM-BiO for solving the over-parameterized regression problem.}
%   \label{fig:regression}
% \end{figure*}

\begin{figure*}[t!]
  \centering
  \subfloat[Infeas. vs. time]
{\includegraphics[width=0.25\textwidth]{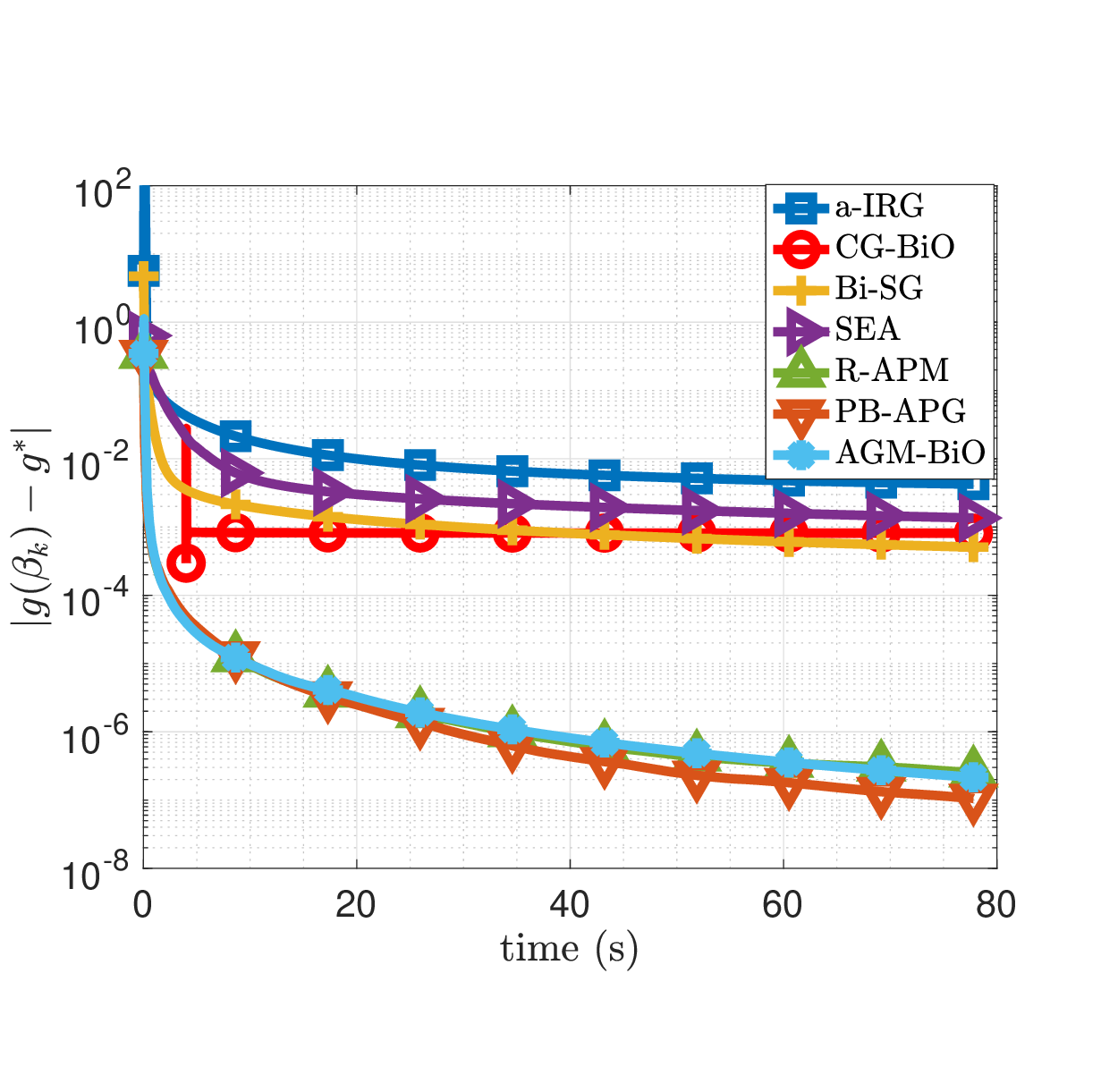}}
  \hspace{-2mm}
  \subfloat[Subopt. vs. time]
  {\includegraphics[width=0.25\textwidth]{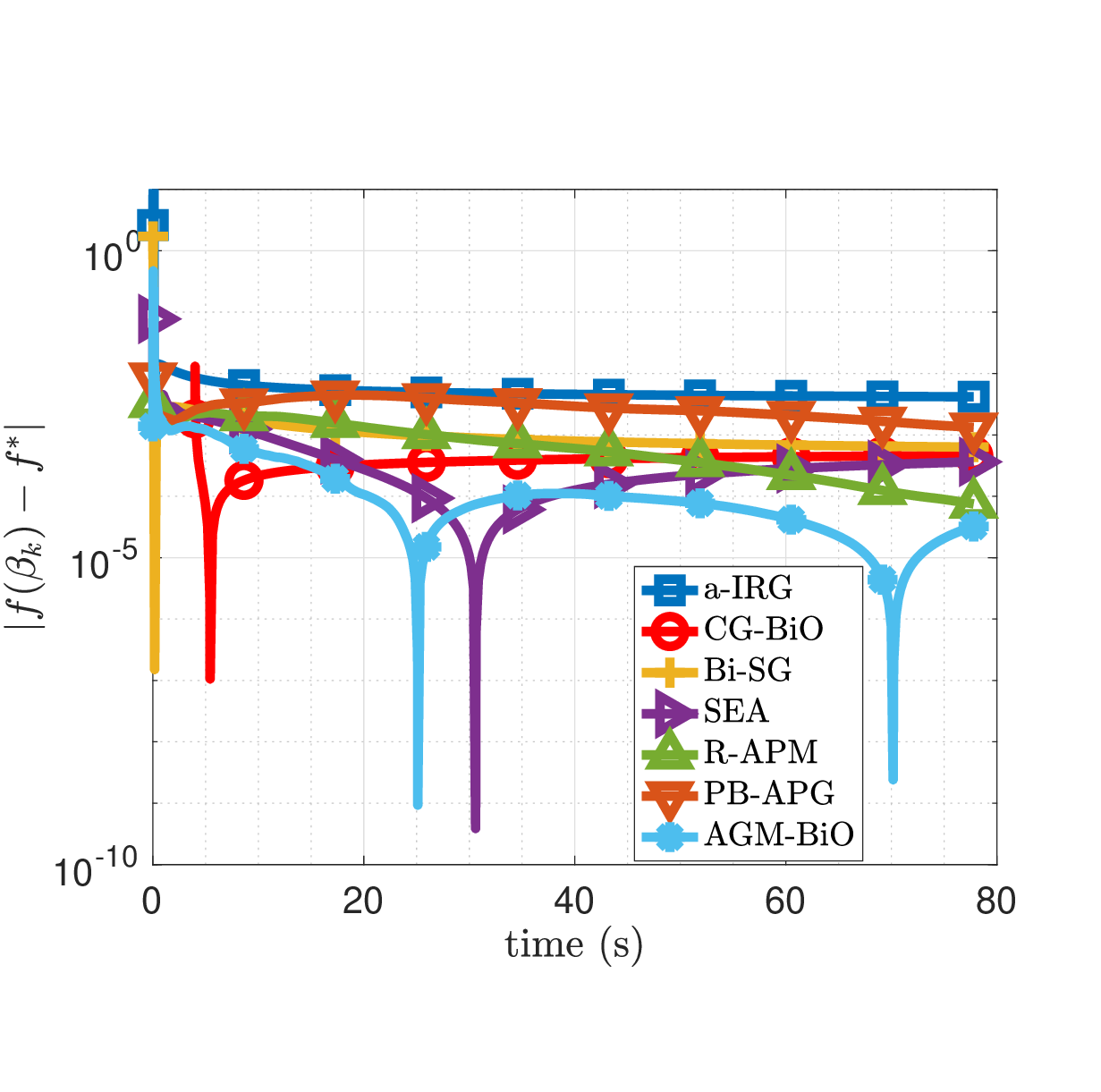}}
  \hspace{-2mm}
  \subfloat[Infeas. vs. iterations]
{\includegraphics[width=0.25\textwidth]{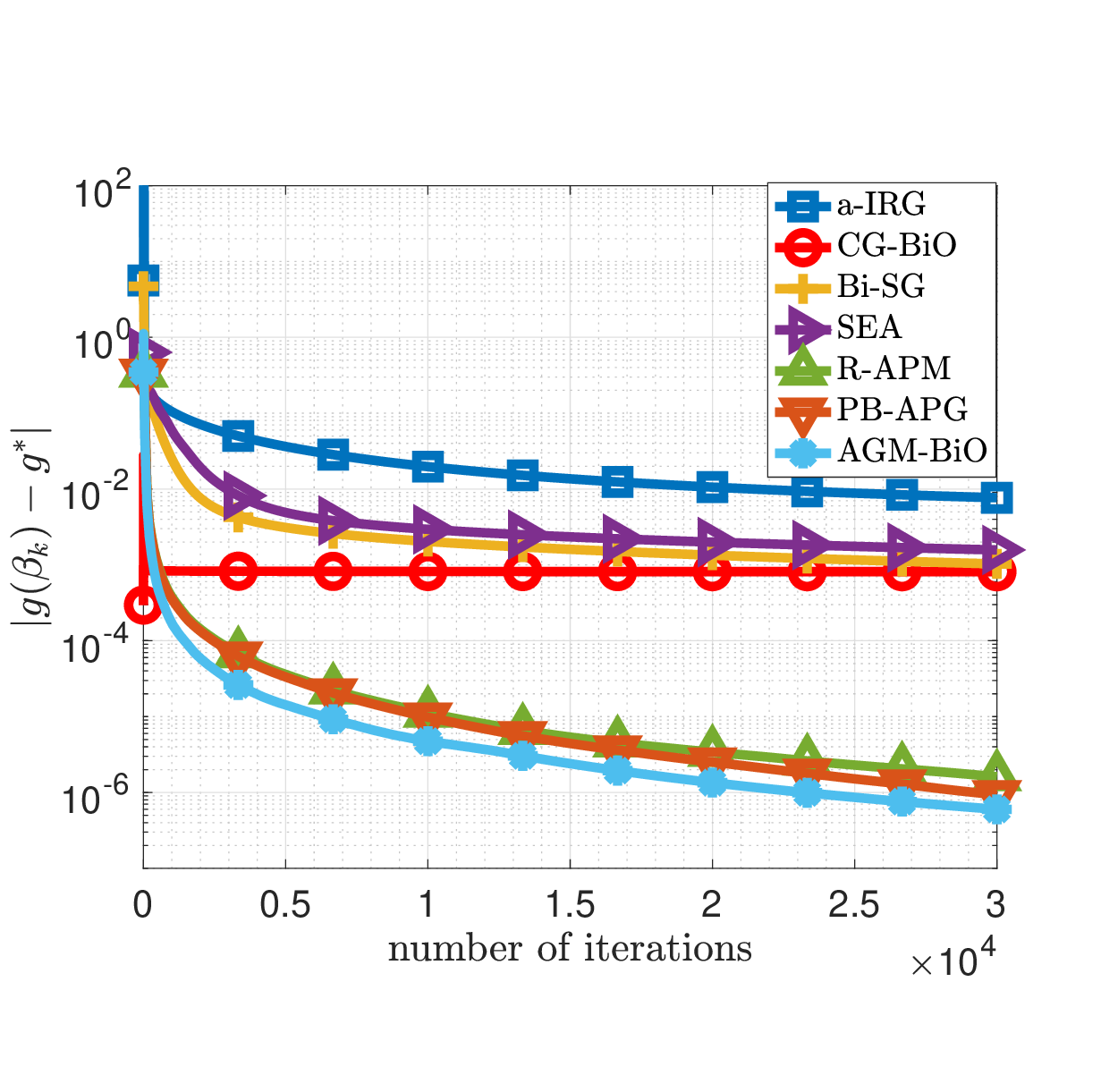}}
  \hspace{-2mm}
  \subfloat[Subopt. vs. iterations]
  {\includegraphics[width=0.25\textwidth]{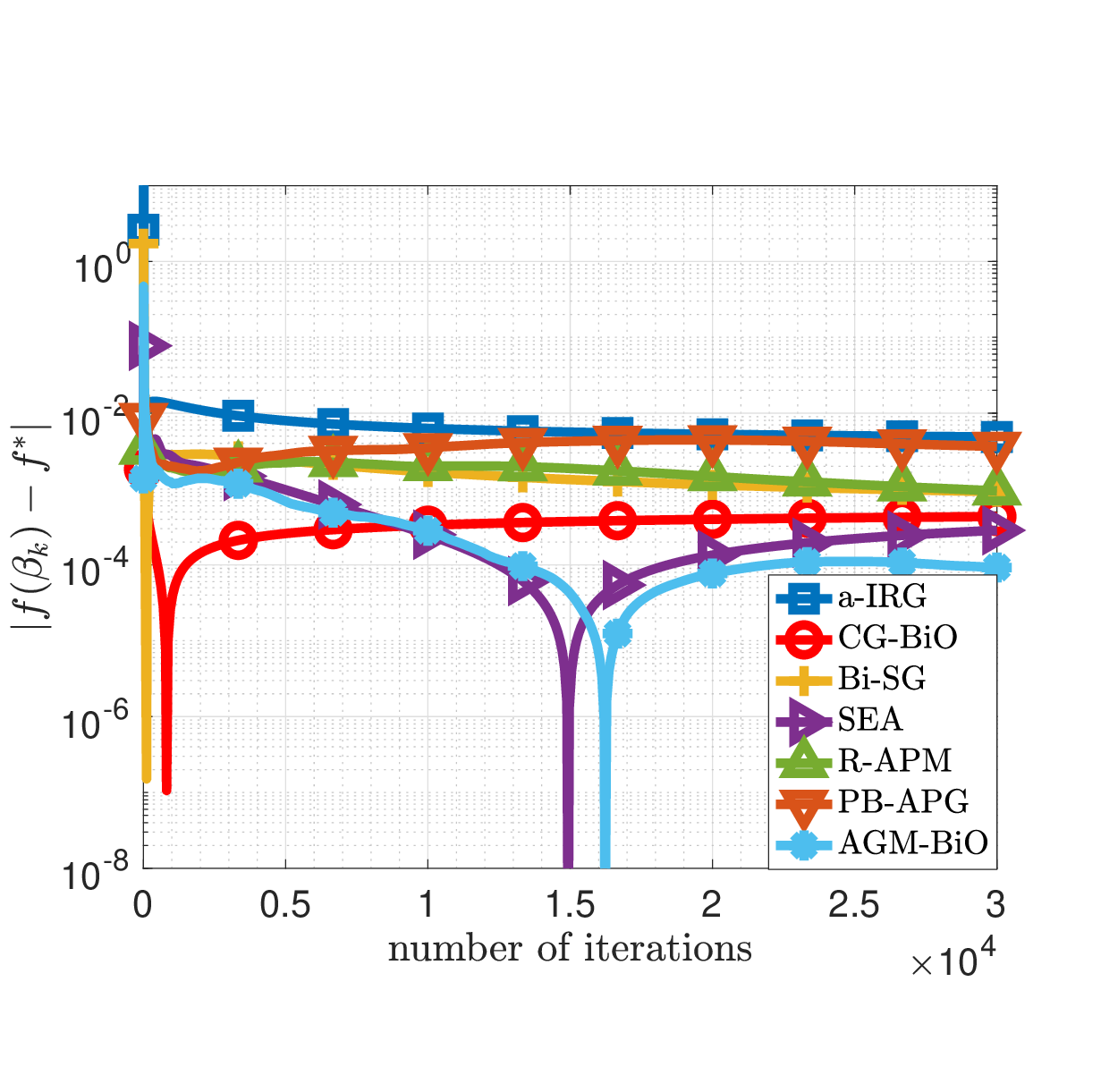}}
  \vspace{0mm}
%   \subfloat[Test error]
% {\includegraphics[width=0.3\textwidth]{OR_T1.eps}}
  \caption{Comparison of a-IRG, CG-BiO, Bi-SG, SEA, R-APM, PB-APG, and AGM-BiO for solving the over-parameterized regression problem.}
  \label{fig:regression}
\end{figure*}

\noindent\textbf{Over-parameterized regression.}
We examine problem \eqref{eq:bi-simp} where the lower-level problem corresponds to training loss, and the upper-level pertains to validation loss. The objective is to minimize the validation loss by selecting an optimal training loss solution. This method is also referred to as lexicographic optimization \cite{gong2021bi}.
 A common example of that is the constrained regression problem, where we aim to find an optimal parameter vector $\boldsymbol{\beta} \in \mathbb{R}^d$ for the validation loss that minimizes the loss $\ell_{\operatorname{tr}}(\boldsymbol{\beta})$ over the training dataset $\mathcal{D}_{\mathrm{tr}}$. To represent some prior knowledge, we constrain $\boldsymbol{\beta}$ to be in some subset $\mathcal{Z} \subseteq \mathbb{R}^d$, e.g., $\mathcal{Z}=\left\{\boldsymbol{\beta} \mid \beta_{1}\leq\dots \leq \beta_{d}\right\}$ in isotonic regression and $\mathcal{Z}=\left\{\boldsymbol{\beta} \mid\|\boldsymbol{\beta}\|_{p} \leq \lambda\right\}$ in $L_{p}$ constrained regression. Without explicit regularization, an over-parameterized regression over the training dataset has multiple global minima, but not all these optimal regression coefficients perform equally on validation or testing datasets. Thus, the upper-level objective serves as a secondary criterion to ensure a smaller error on the validation dataset $\mathcal{D}_{\mathrm{val}}$. The problem can be cast as 
 \begin{equation*}
 %\begin{aligned}
\min_{\boldsymbol{\beta} \in \mathbb{R}^d} f(\boldsymbol{\beta}) \triangleq \ell_{\mathrm{val}}(\boldsymbol{\beta}) \quad
\textrm{ s.t. }  \quad \boldsymbol{\beta} \in \underset{\mathbf{z} \in \mathcal{Z}}{\operatorname{argmin}}~g(\mathbf{z}) \triangleq \ell_{\mathrm{tr}}(\mathbf{z}) 
 %\end{aligned}
\end{equation*}
% \begin{equation}
% \min_{\boldsymbol{\beta} \in \mathbb{R}^d} f(\boldsymbol{\beta}) \triangleq \ell_{\mathrm{val}}(\boldsymbol{\beta}) \quad 
% \textrm{ s.t. } \  \boldsymbol{\beta} \in \underset{\mathbf{z} \in \mathcal{Z}}{\operatorname{argmin}}~g(\mathbf{z}) \triangleq \ell_{\mathrm{tr}}(\mathbf{z}) 
% \end{equation}
In this case, both upper-level and lower-level objectives are convex and smooth if the loss $\ell$ is smooth and convex. Since projections onto the sublevel set of $f$ are difficult to compute, Bisec-BiO is excluded from this experiment.

We apply the Wikipedia Math Essential dataset \cite{rozemberczki2021pytorch} which is composed of a data matrix $\mathbf{A} \in \mathbb{R}^{n \times d}$ with $n=1068$ samples and $d=730$ features and an output vector $\bb \in \mathbb{R}^n$.  We use  $75\%$ of the dataset as the training set $\left(\mathbf{A}_{\mathrm{tr}}, \mathbf{b}_{\mathrm{tr}}\right)$ and $25\%$ as the validation set $\left(\mathbf{A}_{\text {val}}
, \mathbf{b}_{\text {val}}\right)$. %and the remaining $20\%$ as the test set $\left(\mathbf{A}_{\text {test }}, \mathbf{b}_{\text {test }}\right)$. 
For both upper- and lower-level loss functions, we use the least squared loss. Then the lower-level objective is  $g(\boldsymbol{\beta}) = \frac{1}{2}\|\mathbf{A}_{tr}\boldsymbol{\beta} - \bb_{tr}\|_{2}^{2}$, the upper-level objective is $f(\boldsymbol{\beta}) = \frac{1}{2}\|\mathbf{A}_{val}\boldsymbol{\beta} - \bb_{val}\|_{2}^{2}$, and the constraint set is chosen as the unit $L_{2}$-ball $\mathcal{Z}=\left\{\boldsymbol{\beta} \mid\|\boldsymbol{\beta}\|_{2} \leq 1 \right\}$. Note that this regression problem is over-parameterized since the number of features $d$ is larger than the number of data points in both the training set and validation set. %For benchmarking purposes, we use CVX \cite{grant2008graph,grant2014cvx} to solve the lower-level problem and the constrained reformulation of the bilevel problem to obtain the optimal values $g^*$ and $f^*$ respectively. 

% We compare the performance of our method AGM-BiO against all other algorithms with non-asymptotic guarantees a-IRG \cite{kaushik2021method}, CG-BiO \cite{jiangconditional}, Bi-CG \cite{merchav2023convex}, SEA \cite{shen2023online}, and R-APM \cite{samadi2023achieving} as summarized in Table~\ref{sum}. Note that while we can implement these methods numerically, some of them are not directly applicable to our problem setting and hence lack the convergence guarantee. For benchmarking purposes, we use CVX \cite{grant2008graph,grant2014cvx} to solve the lower-level problem and the constrained reformulation of the bilevel problem \eqref{eq:regression} to obtain the optimal values $g^*$ and $f^*$ respectively. 

In Figures 1(a) and 1(c), we observe that the three accelerated gradient-based methods (R-APM, PB-APG, and AGM-BiO) converge faster in reducing infeasibility, both in terms of runtime and number of iterations. In terms of absolute suboptimality, shown in Figures 1(b) and 1(d), AGM-BiO achieves the smallest absolute suboptimality gap among all algorithms. Unlike the infeasibility plots, R-APM and PB-APG underperform compared to AGM-BiO. Note that the lower-level objective in this problem does not satisfy the weak sharpness condition, so the regularization parameter $\eta$ in R-APM is set as $1/(K+1)$. Consequently, the suboptimality for R-APM converges slower than AGM-BiO, as suggested by the theoretical results in Table~\ref{sum}.

% At the same time, AGM-BiO converges faster than R-APM in terms of suboptimality, which also matches our theoretical result. Due to the unavailability of the weak sharpness condition, R-APM needs to go with a small ratio parameter $\eta = 1/(K+1)$ that leads to a rate of $\mathcal{O}(1/K)$ in terms of suboptimality, \red{while AGM-BiO achieves $\mathcal{O}(1/K^{2})$ rate in this case}. Figure 1(c) shows that CG-BiO, Bi-SG, and SEA have a better performance in terms of test error than the rest, followed by a-IRG. \red{R-APM and AGM-BiO have unexpectedly bad testing performance.}  

% \begin{figure*}
%   \centering
%     \vspace{-5mm}
%   \subfloat[Infeas. ($n=3$)]
% {\includegraphics[width=0.24\textwidth]{}}
%   \vspace{0mm}
%   \subfloat[Subopt. ($n=3$)]
%   {\includegraphics[width=0.24\textwidth]{}}
%   \vspace{0mm}
%     \subfloat[Infeas. ($n=100$)]
% {\includegraphics[width=0.24\textwidth]{}}
%   \vspace{0mm}
%   \subfloat[Subopt. ($n=100$)]
%   {\includegraphics[width=0.24\textwidth]{}}
%   \vspace{0mm}
% %   \subfloat[Upper-level function value]
% % {\includegraphics[width=0.3\textwidth]{IP_UV.eps}}
%   \caption{Comparison of a-IRG, Bi-SG, SEA, R-APM, and AGM-BiO for solving the linear inverse problem}
%   \label{fig:inverse_problem}
% \end{figure*}
\begin{figure*}
  \centering
    \vspace{-5mm}
  \subfloat[Infeas. ($n=3$)]
{\includegraphics[width=0.25\textwidth]{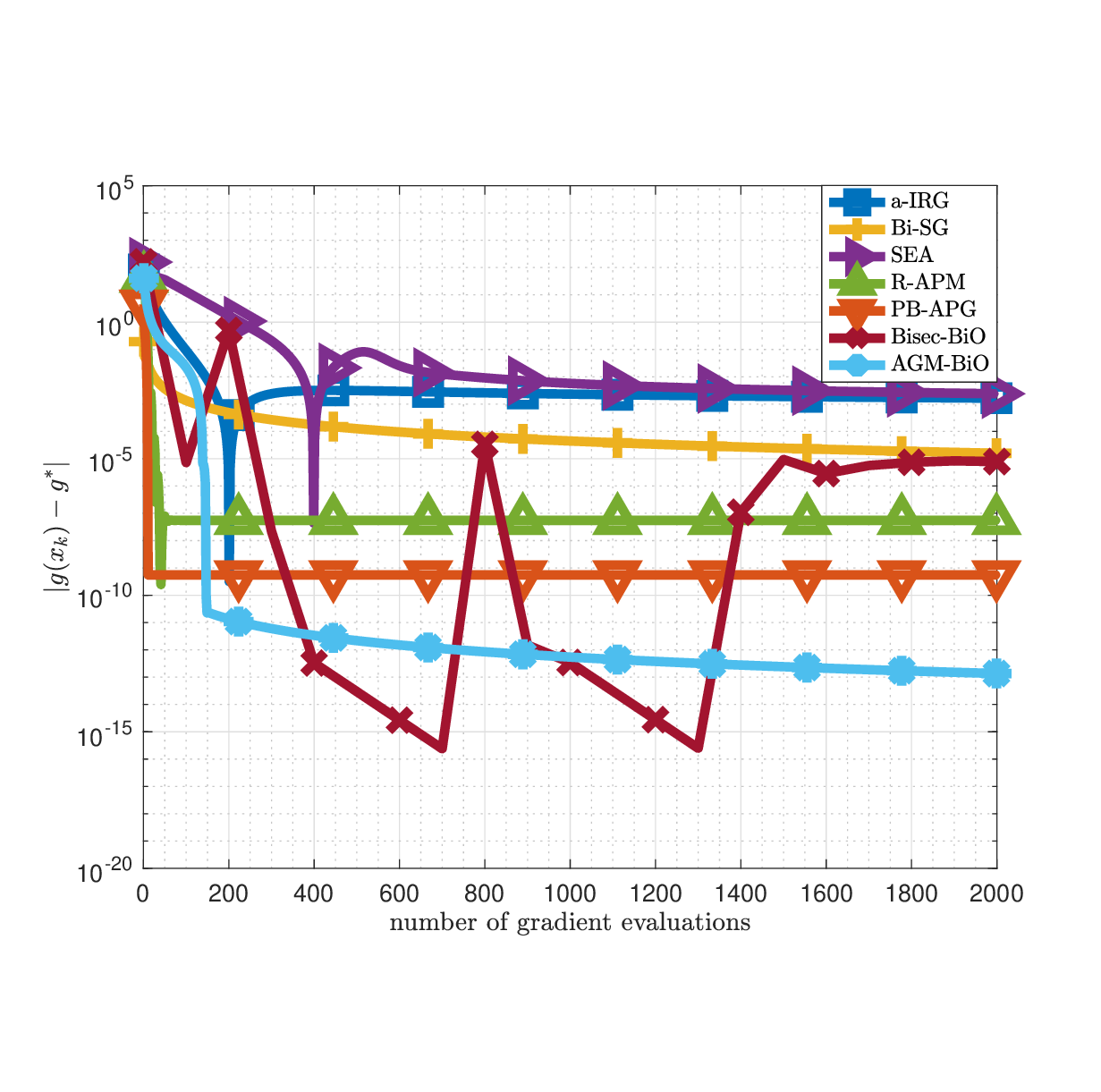}}
\hspace{-2mm}
  \subfloat[Subopt. ($n=3$)]
  {\includegraphics[width=0.25\textwidth]{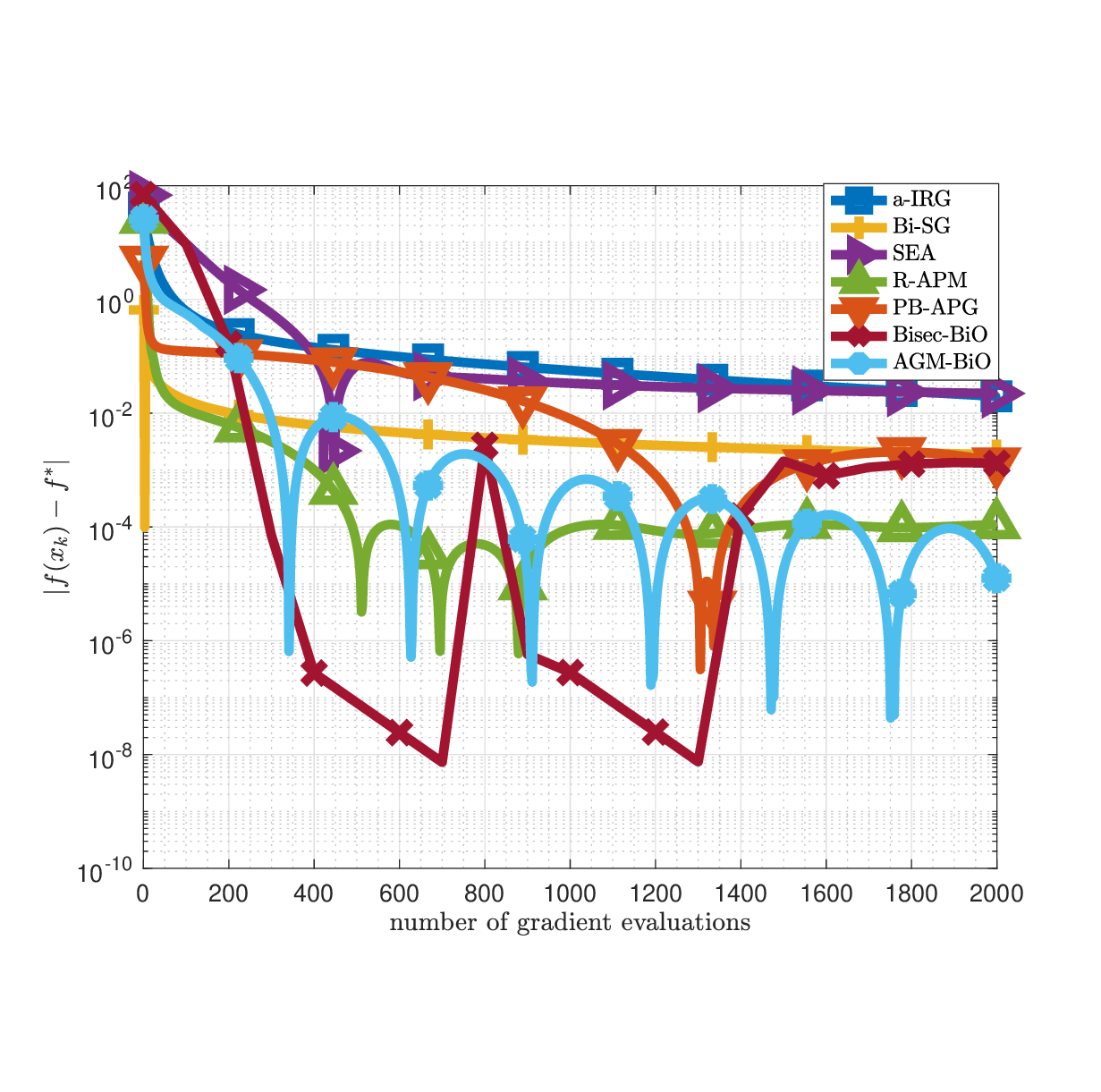}}
\hspace{-2mm}
\subfloat[Infeas. ($n=100$)]
{\includegraphics[width=0.25\textwidth]{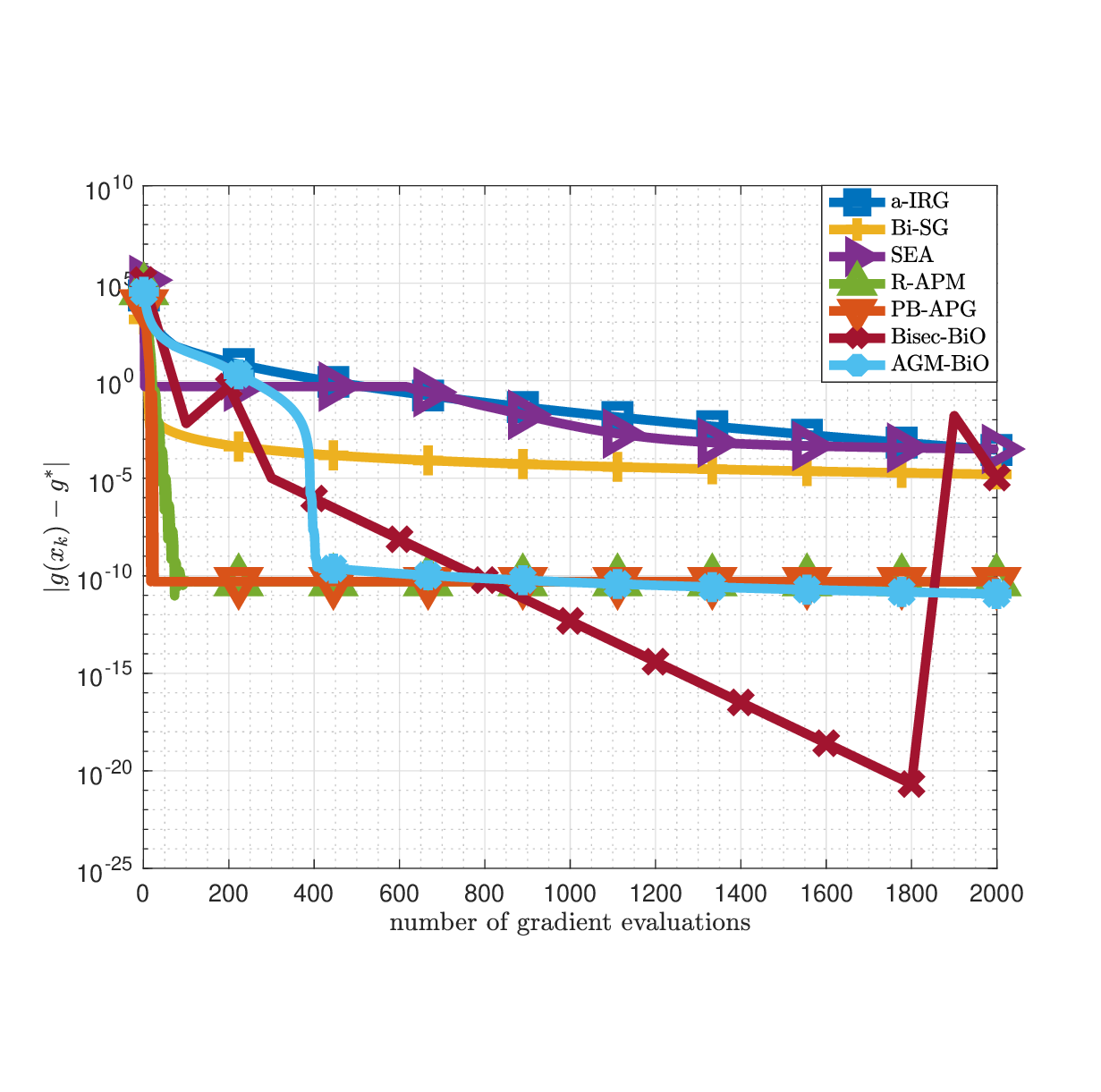}}
\hspace{-2mm}
\subfloat[Subopt. ($n=100$)]
  {\includegraphics[width=0.25\textwidth]{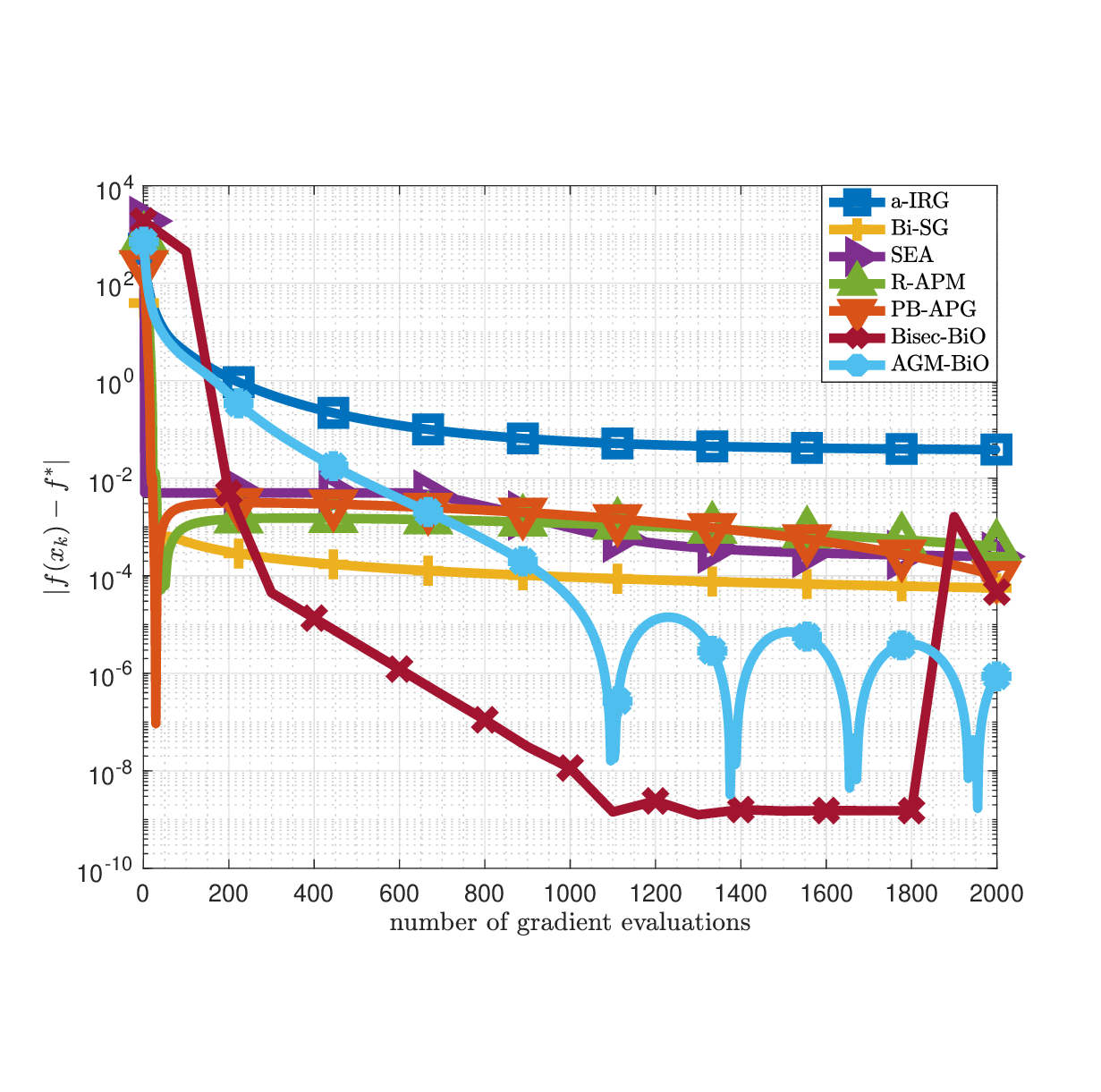}}
  \vspace{0mm}
%   \subfloat[Upper-level function value]
% {\includegraphics[width=0.3\textwidth]{IP_UV.eps}}
  \caption{Comparison of a-IRG, Bi-SG, SEA, R-APM, PB-APG, Bisec-BiO, and AGM-BiO for solving the linear inverse problem.}
  \label{fig:inverse_problem}
\end{figure*}

\noindent\textbf{Linear inverse problems.} 
In the next experiment, we concentrate on a problem that fulfills the Hölderian Error Bound condition for some $r > 1$. We aim to evaluate the performance of our method in this specific context and verify the validity of our theoretical results for this scenario. Specifically, we focus on the so-called linear inverse problems, commonly used to evaluate convex bilevel optimization algorithms, which originate from \cite{sabach2017first}. 
% In this section, we compare the performance of our method AGM-BiO against all other algorithms with non-asymptotic guarantees a-IRG \cite{kaushik2021method}, CG-BiO \cite{jiangconditional}, Bi-CG \cite{merchav2023convex}, SEA \cite{shen2023online}, and R-APM \cite{samadi2023achieving} as summarized in Table~\ref{sum}.
The goal of linear inverse problems is to obtain a solution $\bx \in \mathbb{R}^{n}$ to the system of linear equation $\bA\bx = \bb$.  Note that if $\bA$ is rank-deficient, there can be multiple solutions, or there might be no exact solution due to noise. To address this issue, we chase a solution that has the smallest weighted norm with respect to some positive definite matrix $\bQ$, i.e., $\|\bx\|_\bQ:= \sqrt{\bx^\top \bQ\bx} $. This problem can be also cast as the following simple bilevel problem: 
\begin{equation*}
%\begin{aligned}
\min_{\boldsymbol{\bx} \in \mathbb{R}^n} f(\boldsymbol{\bx}) \triangleq \frac{1}{2}\|\bx\|_{\bQ}^{2} \quad
\textrm{ s.t. }  \quad  \boldsymbol{\bx} \in \underset{\mathbf{z} \in \mathcal{Z}}{\operatorname{argmin}} g(\bz) \triangleq  \frac{1}{2}\|\bA\bz - \bb\|_{2}^{2}
%\end{aligned}
\end{equation*}
% \begin{equation}
% \min_{\boldsymbol{\bx} \in \mathbb{R}^d} f(\boldsymbol{\bx}) \triangleq \frac{1}{2}\|\bx\|_{\bQ}^{2} \quad 
% \textrm{ s.t. }   \boldsymbol{\bx} \in \underset{\mathbf{z} \in \mathcal{Z}}{\operatorname{argmin}} g(\bz) \triangleq  \frac{1}{2}\|\bA\bz - \bb\|_{2}^{2}
% \end{equation}
%To deal with the potential nonexistence of a solution, the lower-level objective is selected to minimize the squared residuals, i.e., $
 %   \min_{\bz \in \mathcal{Z}} g(\bz) \triangleq \frac{1}{2} \|\bA\bz - \bb\|_{2}^{2}$.  Since the optimizer of the lower-level problem may not be unique, a second criterion is used to select a single solution out of the optimal solution set of the lower-level problem and the upper-level loss is defined as $
  %  \min_{\bx \in \mathcal{X}_{g}^{*}} f(\bx) \triangleq \frac{1}{2} \bx^{K}\bQ \bx $ ,
%where $\bQ \in \mathbb{R}^{n \times n}$ is a symmetric positive definite matrix. This problem fits into the convex bilevel optimization problem~\eqref{eq:bi-simp}.
For this class of problem, if $\bQ$, $\bA$, and $\bb$ are generated randomly or by the “regularization tools” like \cite{sabach2017first,shen2023online}, we are not able to obtain the exact optimal value $f^*$. To the best of our knowledge, no existing solver could obtain the exact optimal value $f^*$ for this bilevel problem. Specifically, the existing solvers either fail to solve this bilevel problem or return an inaccurate solution by solving a relaxed version of the problem. Hence, in~\cite{sabach2017first,shen2023online} they only reported the upper-level function value. 
% We also include the experiment they conducted in Appendix~\ref{app:experiment}. 
However, in this paper, we intend to obtain the complexity bounds for finding $(\epsilon_{f},\epsilon_{g})$-optimal and  $(\epsilon_{f},\epsilon_{g})$-absolute optimal solutions. Without 
knowing $f^*$, we can not characterize the behavior of $|f(\bx_{k}) - f^*|$. Therefore, we choose an example where we can obtain the exact solution. Specifically, we set $\bQ = \bI_{n}$, $\bA = \mathbf{1}_{n}^{\top}$, $\bb=1$, and the constraint set $\mathcal{Z} = \mathbb{R}_{+}^{n}$. In this case, the optimal solution $\bx^* = \frac{1}{n}\mathbf{1}_{n}$ and optimal value $f^* = \frac{1}{2n}$. This specific example essentially involves seeking the minimum norm for an under-determined system. Note that the lower-level objective in this problem satisfies the Hölderian Error Bound condition with order $r = 2$ \cite{necoara2019linear}. Hence, we do not need the constraint set $\mathcal{Z}$ to be compact as shown in Theorem~\ref{thm:upper_lower_hd}. Due to the unbounded nature of the constraint set, Frank-Wolfe-type methods are not viable options. Consequently, we have opted not to incorporate CG-BiO in this experiment.

We explored examples with two distinct dimensions: $n=3$ and $n=100$, evaluating a total of $2000$ gradients. In Figures 2(a) and 2(c), AGM-BiO shows superior performance in terms of infeasibility. In Figures 2(b) and 2(d), we compare methods in terms of absolute error of suboptimality. The gap between R-APM and AGM-BiO is smaller for $n=3$, but for $n=100$, AGM-BiO significantly outperforms all other methods, including R-APM. 
Since the regularization and penalty parameters in R-APM and PB-APG are fixed, they might get stuck at a certain accuracy level, as seen in Figures 2(a) and 2(c). In contrast, AGM-BiO uses a dynamic framework for minimizing the upper and lower-level functions, consistently reducing both suboptimality and infeasibility. 
Although Bisec-BiO theoretically has the best complexity results due to the ease of projecting onto the sublevel set of $f$, its performance in the last iteration is inconsistent, as shown in Figure 2.

\section{Conclusion}

 In this paper, we introduced an accelerated gradient-based algorithm for solving a specific class of bilevel optimization problems with convex objective functions in both the upper and lower levels. Our proposed algorithm achieves a computational complexity of $\mathcal{O}(\max\{\epsilon_{f}^{-0.5},\epsilon_{g}^{-1}\})$. When an additional weak sharpness condition is applied to the lower-level function $g$, the iteration complexity improves to $\tilde{\mathcal{O}}(\max\{\epsilon_{f}^{-0.5},\epsilon_{g}^{-0.5}\})$, matching the well-known fastest convergence rate for single-level convex optimization problems. We further extended this result to an iteration complexity of $\tilde{\mathcal{O}}(\max\{\epsilon_{f}^{-\frac{2r-1}{2r}},\epsilon_{g}^{-\frac{2r-1}{2r}}\})$ when the lower-level loss satisfies the Hölderian error bound assumption.

\section*{Acknowledgements}
The research of J. Cao, R. Jiang and A. Mokhtari is supported in part by NSF Grants 2127697, 2019844, and  2112471,  ARO  Grant  W911NF2110226,  the  Machine  Learning  Lab  (MLL)  at  UT  Austin, and the Wireless Networking and Communications Group (WNCG) Industrial Affiliates Program. The research of E. Yazdandoost Hamedani is supported by NSF Grant 2127696.

% \section*{Acknowledgements}
% This research of R. Jiang and A. Mokhtari is supported in part by NSF Grants 2007668, 2019844, and  2112471,  ARO  Grant  W911NF2110226,  the  Machine  Learning  Lab  (MLL)  at  UT  Austin, and the Wireless Networking and Communications Group (WNCG) Industrial Affiliates Program.
% \newpage
\printbibliography

\newpage
\appendix
\section*{Appendix}

\section{Proof of the Main Results}
\subsection{Proof of Theorem~\ref{thm:upper_lower}}
To prove Theorem~\ref{thm:upper_lower}, we start with the following general lemma that holds for any choice of the step sizes $\{a_k\}$.
\begin{lemma}\label{lem:potential}
% Given the choice of $a_k$ in Theorem~\ref{thm:upper_lower}, 
Let $\{\bx_k\}$ be the sequence of iterates generated by Algorithm~\ref{alg:AGM-BiO} with stepsize $a_k>0$ for
$k \geq 0$. Then we have 

\begin{equation}
\begin{aligned}
    A_{k+1}(f(\bx_{k+1})-f(\bx^*)) + \frac{1}{2}\|\bz_{k+1}-\bx^*\|^2 - \Bigl( A_{k}(f(\bx_{k})-f(\bx^*)) + \frac{1}{2}\|\bz_{k}-\bx^*\|^2\Bigr) \\
    \leq \left(\frac{L_f a_k^2}{2A_{k+1}}-\frac{1}{2}\right)\|\bz_{k+1}-\bz_k\|^2 \label{eq:potential_f}
\end{aligned}
\end{equation}
\begin{gather}  
   A_{k+1}(g(\bx_{k+1})-g(\bx^*)) - A_{k}(g(\bx_{k})-g(\bx^*)) \leq a_k (g_k-g(\bx^*)) + \frac{L_g a^2_{k}}{2A_{k+1}}\|\bz_{k+1}-\bz_k\|^2. \label{eq:potential_g}
\end{gather}
\end{lemma}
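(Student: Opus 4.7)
The plan is to exploit the standard Tseng-style estimating-sequence analysis for part (i), and to adapt a structurally similar convexity-plus-smoothness argument for part (ii), where the crucial new ingredient is the linearized cutting-plane constraint $g(\by_k)+\langle\nabla g(\by_k),\bz_{k+1}-\by_k\rangle \leq g_k$ that holds because $\bz_{k+1}\in\mathcal{X}_k$. The common algebraic backbone is the pair of identities $\bx_{k+1}-\by_k = \tfrac{a_k}{A_{k+1}}(\bz_{k+1}-\bz_k)$ and $A_{k+1}\by_k = A_k\bx_k + a_k\bz_k$, both immediate from the updates defining $\by_k$ and $\bx_{k+1}$.

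For part (i), I would first apply $L_f$-smoothness at $\by_k$ to bound $f(\bx_{k+1})$, multiply through by $A_{k+1}$, and split $A_{k+1}f(\by_k) = A_kf(\by_k) + a_kf(\by_k)$. Using convexity $f(\by_k) \leq f(\bx_k) + \langle\nabla f(\by_k),\by_k-\bx_k\rangle$ with weight $A_k$ and $f(\by_k) \leq f(\bx^*) + \langle\nabla f(\by_k),\by_k-\bx^*\rangle$ with weight $a_k$, the linear-in-$\nabla f(\by_k)$ contributions collapse via $A_{k+1}\by_k = A_k\bx_k + a_k\bz_k$ to $a_k\langle\nabla f(\by_k), \bz_k-\bx^*\rangle$. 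Combining with the smoothness linear term gives the residual $a_k\langle\nabla f(\by_k), \bz_{k+1}-\bx^*\rangle$. Since $\bx^* \in \mathcal{X}_g^* \subseteq \mathcal{X}_k$, the first-order optimality of $\bz_{k+1}=\Pi_{\mathcal{X}_k}(\bz_k-a_k\nabla f(\by_k))$ delivers $a_k\langle\nabla f(\by_k),\bz_{k+1}-\bx^*\rangle \leq \langle\bz_k-\bz_{k+1}, \bz_{k+1}-\bx^*\rangle$, and the three-point identity converts this to $\tfrac12\|\bz_k-\bx^*\|^2 - \tfrac12\|\bz_{k+1}-\bx^*\|^2 - \tfrac12\|\bz_{k+1}-\bz_k\|^2$; subtracting $A_{k+1}f(\bx^*)$ from both sides and rearranging produces \eqref{eq:potential_f}.

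For part (ii), the plan is to replace the convexity-at-$\bx^*$ inequality used above by the linearized cutting plane $g(\by_k)+\langle\nabla g(\by_k),\bz_{k+1}-\by_k\rangle \leq g_k$. Multiplying this by $a_k$ and the convexity bound $g(\by_k)+\langle\nabla g(\by_k),\bx_k-\by_k\rangle\leq g(\bx_k)$ by $A_k$, then summing, the right-hand side becomes $A_kg(\bx_k) + a_kg_k$, while on the left the gradient contributions combine—using once again $A_{k+1}\by_k = A_k\bx_k + a_k\bz_k$ together with $A_{k+1}\bx_{k+1} = A_k\bx_k + a_k\bz_{k+1}$—into exactly $A_{k+1}\langle\nabla g(\by_k), \bx_{k+1}-\by_k\rangle$. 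Applying the descent lemma $g(\bx_{k+1}) \leq g(\by_k) + \langle\nabla g(\by_k), \bx_{k+1}-\by_k\rangle + \tfrac{L_g}{2}\|\bx_{k+1}-\by_k\|^2$, multiplying by $A_{k+1}$, and using $\|\bx_{k+1}-\by_k\|^2 = \tfrac{a_k^2}{A_{k+1}^2}\|\bz_{k+1}-\bz_k\|^2$ yields $A_{k+1}g(\bx_{k+1}) \leq A_kg(\bx_k) + a_kg_k + \tfrac{L_ga_k^2}{2A_{k+1}}\|\bz_{k+1}-\bz_k\|^2$; subtracting $A_{k+1}g(\bx^*)$ then gives \eqref{eq:potential_g}.

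The main subtlety is part (ii): standard AGM potential arguments target only the objective, so the trick is to notice that the cutting plane $\mathcal{X}_k$ was designed precisely so that the linearization of $g$ at $\by_k$ is a valid constraint at $\bz_{k+1}$, and that pairing this linearization with the convexity bound at $\bx_k$ under the weights $(a_k, A_k)$—matching the convex combination that defines $\by_k$—is what allows the $A_{k+1}\by_k$ identity to collapse the linear terms and reduce everything to the quadratic smoothness error. No compactness of $\mathcal{Z}$ or boundedness of the iterates is needed here; the lemma follows purely algebraically from smoothness, convexity, projection optimality at $\bx^*$, and the cutting-plane constraint at $\bz_{k+1}$.
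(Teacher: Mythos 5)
Your proposal is correct and follows essentially the same route as the paper's proof: the same weighted convexity inequalities collapsed via $A_{k+1}\by_k=A_k\bx_k+a_k\bz_k$, the projection optimality at $\bx^*\in\mathcal{X}_g^*\subseteq\mathcal{X}_k$ with the three-point identity for part (i), and the cutting-plane constraint at $\bz_{k+1}$ paired with convexity at $\bx_k$ and the descent lemma for part (ii). The only difference is cosmetic (you apply the cutting-plane bound before rather than after combining convexity with smoothness), so the two arguments are the same computation in a different order.
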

\begin{proof}[Proof of Lemma~\ref{lem:potential}]
Let $\bx^*$ be any optimal solution of \eqref{eq:bi-simp}. 
We first consider the upper-level objective $f$. Since $f$ is convex, we have 
\begin{equation}\label{eq:convex_ineq}
    f(\by_k) - f(\bx^*) \leq \fprod{\grad f(\by_k),\by_k-\bx^*},\quad 
    f(\by_k) - f(\bx_k) \leq \fprod{\grad f(\by_k),\by_k-\bx_k}.
\end{equation}
Now given the update rule $A_{k+1}=A_k+a_k$, we can write
\begin{equation}\label{eq:weighted_inequality}
    A_{k+1}(f(\by_k)-f(\bx^*)) - A_k(f(\bx_k)-f(\bx^*)) = a_k(f(\by_k) - f(\bx^*))+A_k(f(\by_k) - f(\bx_k))
\end{equation}
Combining \eqref{eq:convex_ineq} and \eqref{eq:weighted_inequality}, we have 
\begin{equation}\label{eq:anytime_online_batch}
    \begin{aligned}
    A_{k+1}(f(\by_k)-f(\bx^*)) - A_k(f(\bx_k)-f(\bx^*)) &\leq  a_k(\fprod{\grad f(\by_k),\by_k-\bx^*})+A_k(\fprod{\grad f(\by_k),\by_k-\bx_k}) \\
    &= \fprod{\grad f(\by_k), a_k \by_k +A_k(\by_k-\bx_k)-a_k\bx^*} \\
    &= a_k \fprod{\grad f(\by_k),\bz_k-\bx^*},
\end{aligned}
\end{equation}
where the last equality follows from the definition of $\by_k$. Furthermore, since $f$ is $L_f$-smooth, we have 
\begin{equation}\label{eq:f_smooth}
    f(\bx_{k+1}) \leq f(\by_k)+\fprod{\grad f(\by_k),\bx_{k+1}-\by_k}+\frac{L_f}{2}\|\bx_{k+1}-\by_k\|^2.
\end{equation}
If we multiply both sides of \eqref{eq:f_smooth} by $A_{k+1}$ and combine the resulting inequality with \eqref{eq:anytime_online_batch}, we obtain
\begin{equation}\label{eq:potential}
    \begin{aligned}
    &\phantom{{}\leq{}} A_{k+1}(f(\bx_{k+1})-f(\bx^*)) - A_k(f(\bx_k)-f(\bx^*))\\
    & \leq a_k \fprod{\grad f(\by_k),\bz_k-\bx^*}+ A_{k+1} \fprod{\grad f(\by_k),\bx_{k+1}-\by_k}+\frac{L_f A_{k+1}}{2}\|\bx_{k+1}-\by_k\|^2 \\
    & = a_k \fprod{\grad f(\by_k),\bz_k-\bx^*}+ a_{k} \fprod{\grad f(\by_k),\bz_{k+1}-\bz_k}+\frac{L_fa_k^2}{2A_{k+1}}\|\bz_{k+1}-\bz_k\|^2 \\
    & = a_k \fprod{\grad f(\by_k),\bz_{k+1}-\bx^*}+\frac{L_fa_k^2}{2A_{k+1}}\|\bz_{k+1}-\bz_k\|^2,
\end{aligned}
\end{equation}
where we used the fact that $a_k(\bz_{k+1}-\bz_k)=A_{k+1}(\bx_{k+1}-\by_k)$ in the first equality. Moreover, since $\bx^*\in \mathcal{Z}_k$, we obtain from the update rule in \eqref{eq:update_of_z} that
\begin{equation}
   \begin{aligned}\label{eq:projection}
    &\fprod{\bz_{k+1}-\bz_k+a_k\nabla f(\by_k),\bx^*-\bz_{k+1}} \geq 0\\
    \Leftrightarrow \quad & a_k\fprod{\nabla f(\by_k),\bz_{k+1}-\bx^*} \leq \fprod{\bz_{k+1}-\bz_k,\bx^*-\bz_{k+1}} \\
    \Leftrightarrow \quad & a_k\fprod{\nabla f(\by_k),\bz_{k+1}-\bx^*} \leq \frac{1}{2}\|\bz_k-\bx^*\|^2-\frac{1}{2}\|\bz_{k+1}-\bx^*\|^2-\frac{1}{2}\|\bz_{k+1}-\bz_k\|^2.
\end{aligned} 
\end{equation}
Combining \eqref{eq:potential} and \eqref{eq:projection} leads to 
\begin{equation*}
\begin{aligned}
    A_{k+1}(f(\bx_{k+1})-f(\bx^*)) + \frac{1}{2}\|\bz_{k+1}-\bx^*\|^2 - \left( A_{k}(f(\bx_{k})-f(\bx^*)) + \frac{1}{2}\|\bz_{k}-\bx^*\|^2\right) \\
    \leq \frac{1}{2}\left(\frac{L_f a_k^2}{A_{k+1}}-1\right)\|\bz_{k+1}-\bz_k\|^2,
\end{aligned}
\end{equation*}
% Finally, note that by our choice of $a_k$, we have 
% \begin{equation}
%     a_k = \frac{k+1}{4L_f} \quad \text{and} \quad A_{k+1} = \frac{(k+1)(k+2)}{8L_f}. 
% \end{equation}
% Thus, it can be verified that $\frac{L_f a_k^2}{A_{k+1}} \leq \frac{1}{2}$, 
which proves the claim in \eqref{eq:potential_f}. 

Next, we proceed to prove the claim in \eqref{eq:potential_g}. To do so, we first leverage the convexity of $g$ which leads to
\begin{equation}\label{eq:convex_ineq_g}
    g(\by_k) - g(\bx_k) \leq \fprod{\grad g(\by_k),\by_k-\bx_k}.
\end{equation}
Also, since $g$ is $L_g$-smooth, we have 
\begin{equation}\label{eq:g_smooth}
    g(\bx_{k+1}) \leq g(\by_k)+\fprod{\grad g(\by_k),\bx_{k+1}-\by_k}+\frac{L_g}{2}\|\bx_{k+1}-\by_k\|^2.
\end{equation}
By multiplying both sides of \eqref{eq:convex_ineq_g} and \eqref{eq:g_smooth} by $A_{k}$ and $A_{k+1}$, respectively, and adding the resulted inequalities we obtain 
\begin{align*}
    &\phantom{{}={}}A_{k+1} (g(\bx_{k+1})-g(\by_k)) + A_k (g(\by_k)-g(\bx_k))\\
    &\leq A_{k+1} \fprod{\grad g(\by_k),\bx_{k+1}-\by_k}+ A_k \fprod{\grad g(\by_k),\by_k-\bx_k} + \frac{L_g A_{k+1}}{2}\|\bx_{k+1}-\by_k\|^2 \\
        &= a_k \fprod{\grad g(\by_k),\bz_{k+1}-\bz_k}+ A_k \fprod{\grad g(\by_k),\by_k-\bx_k} + \frac{L_g a^2_{k}}{2A_{k+1}}\|\bz_{k+1}-\bz_k\|^2 \\
    & = a_k \fprod{\grad g(\by_k),\bz_{k+1}-\by_k}+ \frac{L_g a^2_{k}}{2A_{k+1}}\|\bz_{k+1}-\bz_k\|^2,
\end{align*}
where the first equality holds since $a_k(\bz_{k+1}-\bz_k)=A_{k+1}(\bx_{k+1}-\by_k)$, and the second equality holds since  $a_k(\bz_k-\by_k)=A_k(\by_k-\bx_k)$. Lastly, by the definition of the constructed cutting plane, we know that $g(\by_k)+\fprod{\nabla g(\by_k),\bz-\by_k}\leq g_k$ for any $\bz$.  Hence,  $\fprod{\nabla g(\by_k),\bz_{k+1}-\by_k}$ is upper bounded by $g_k-g(\by_k)$ which itself is upper bounded by $g_k-g(\bx^*)$. Applying this substitution into to the above expression would lead to the claim in \eqref{eq:potential_g}.
\end{proof}

Now we are ready to prove Theorem~\ref{thm:upper_lower}.
\begin{proof}[Proof of Theorem~\ref{thm:upper_lower}]
To begin with, note that by our choice of $a_k$, we have 
\begin{equation}
    a_k = \frac{k+1}{4L_f} \quad \text{and} \quad A_{k+1} = \frac{(k+1)(k+2)}{8L_f}. 
\end{equation}
Thus, it can be verified that ${L_f a_k^2} \leq \frac{1}{2}A_{k+1}$. Then it follows from Lemma~\ref{lem:potential} that 
\begin{gather}
   A_{k+1}(f(\bx_{k+1})-f(\bx^*)) + \frac{1}{2}\|\bz_{k+1}-\bx^*\|^2 - \Bigl( A_{k}(f(\bx_{k})-f(\bx^*)) + \frac{1}{2}\|\bz_{k}-\bx^*\|^2\Bigr) \leq -\frac{1}{4}\|\bz_{k+1}-\bz_k\|^2 \label{eq:potential_f_1/4}\\
   A_{k+1}(g(\bx_{k+1})-g(\bx^*)) - A_{k}(g(\bx_{k})-g(\bx^*)) \leq a_k (g_k-g(\bx^*)) + \frac{L_g }{4L_f}\|\bz_{k+1}-\bz_k\|^2. \label{eq:potential_g_1/4}
\end{gather}

We first prove the convergence guarantee for the upper-level objective. By using induction on \eqref{eq:potential_f_1/4}, we obtain that for any $k\geq 0$
\begin{equation}
    A_{k}(f(\bx_{k})-f(\bx^*)) + \frac{1}{2}\|\bz_{k}-\bx^*\|^2 \leq A_{0}(f(\bx_{0})-f(\bx^*)) + \frac{1}{2}\|\bz_{0}-\bx^*\|^2 = \frac{1}{2}\|\bz_{0}-\bx^*\|^2,
\end{equation}
which implies 
\begin{equation*}
    f(\bx_{k})-f(\bx^*) \leq \frac{\|\bz_{0}-\bx^*\|^2}{2 A_k} = \frac{4L_f\|\bz_{0}-\bx^*\|^2}{k(k+1)}.
\end{equation*}

We proceed to establish an upper bound on $g(\bx_k)-g(\bx^*)$. 
% Note that given the fact that  $L_fa_k^2 \leq \frac{1}{2} A_{k+1}$, the result in \eqref{eq:potential_g_1/4}  implies that
% \begin{equation}\label{eq:final_bound_for_lemma}
%     A_{k+1} (g(\bx_{k+1})-g(\by_k)) + A_k (g(\by_k)-g(\bx_k))\leq  a_k (g_k-g(\bx^*))+ \frac{L_g }{4L_f}\|\bz_{k+1}-\bz_k\|^2
% \end{equation}
% For the lower-level objective, 
By summing the inequality in \eqref{eq:potential_g_1/4} from $0$ to $k-1$ we obtain 
\begin{equation}\label{eq:summing_lower_level}
    \begin{aligned}
    A_k(g(\bx_k)-g(\bx^*)) &\leq \sum_{i=0}^{k-1} a_i(g_i-g^*) + \frac{L_g}{4L_f}\sum_{i=0}^{k-1} \|\bz_{i+1}-\bz_i\|^2 \\
    &\leq \sum_{i=0}^{k-1} \frac{i+1}{4L_f}\frac{2L_g\|\bx_0-\bx^*\|^2}{(i+1)^2} + \frac{L_g}{4L_f}\sum_{i=0}^{k-1} D^2 \\
    & = \frac{L_g}{2L_f}\|\bx_0-\bx^*\|^2 (\ln k+1) +\frac{L_g}{4L_f}D^2k.
\end{aligned}
\end{equation}
Note that the second inequality holds due to the condition in \eqref{eq:convergence_g}. Thus, we obtain
\begin{equation*}
    g(\bx_k)-g(\bx^*) \leq \frac{4L_g\|\bx_0-\bx^*\|^2 (\ln k+1)}{k(k+1)} +\frac{2L_gD^2}{k+1}.
\end{equation*}
The above upper bound on $ g(\bx_k)-g(\bx^*)$ without any additional condition, but next we show that if $f(\bx_k)\geq f(\bx^*)$ the above upper bound can be further improved as we can upper bound $\sum_{i=0}^{k-1} \|\bz_{i+1}-\bz_i\|^2$ by a constant independent of $k$ instead of $kD^2$.
To prove this claim, by summing the inequality in \eqref{eq:potential_f} from $0$ to $k-1$ we obtain
\begin{equation}
    \frac{1}{4}\sum_{i=0}^{k-1} \|\bz_{i+1}-\bz_i\|^2 \leq \frac{1}{2}\|\bz_0-\bx^*\|^2-\left(A_{k}(f(\bx_{k})-f(\bx^*)) + \frac{1}{2}\|\bz_{k}-\bx^*\|^2\right).
\end{equation}
Hence, if $f(\bx_k)\geq f(\bx^*)$, then it holds 
\begin{equation}
    \sum_{i=0}^{k-1} \|\bz_{i+1}-\bz_i\|^2 \leq 2\|\bz_0-\bx^*\|^2.
\end{equation}
Thus if replace $\sum_{i=0}^{k-1} \|\bz_{i+1}-\bz_i\|^2$ in \eqref{eq:summing_lower_level} by $2\|\bz_0-\bx^*\|^2$, we would obtain the following improve bound:
\begin{equation*}
    g(\bx_k)-g(\bx^*) \leq \frac{4L_g\|\bx_0-\bx^*\|^2(\ln k+1)}{k(k+1)}  +\frac{4L_g\|\bz_0-\bx^*\|^2}{k(k+1)}.
\end{equation*}
\end{proof}

% \begin{remark}
% The main difficulty in obtaining an accelerated rate of $\mathcal{O}(1/K^2)$ for $g$ is that it is unclear how to control $\sum_{k=0}^{K-1} \|\bz_{k+1}-\bz_k\|^2$. This, in turn, is because we don't know how to prove a \textbf{lower bound} on $f$. Instead, we used the compactness of $\cZ$ to achieve the $\mathcal{O}(1/K)$ for $g$.
% \end{remark}
Recall Remark~\ref{remark:why_compact}. The main difficulty in obtaining an accelerated rate of $\mathcal{O}(1/K^2)$ for $g$ is that it is unclear how to control $\sum_{k=0}^{K-1} \|\bz_{k+1}-\bz_k\|^2$. This, in turn, is because we don't know how to prove a \textbf{lower bound} on $f$. Instead, we used the compactness of $\cZ$ to achieve the $\mathcal{O}(1/K)$ for $g$.

\subsection{Proof of Lemma~\ref{lm:weighted_sum}}
\begin{proof}[Proof of Lemma~\ref{lm:weighted_sum}]

Note that by multiplying both sides of \eqref{eq:potential_f} by $\lambda>0$ we have
\begin{equation}\label{eq:potential_f_general}
\begin{aligned}
 A_{k+1}(\lambda (f(\bx_{k+1})-f(\bx^*))) + \frac{\lambda}{2}\|\bz_{k+1}-\bx^*\|^2 - \left( A_{k}(\lambda(f(\bx_{k})-f(\bx^*))) + \frac{\lambda}{2}\|\bz_{k}-\bx^*\|^2\right) \\
 \leq \frac{\lambda}{2}\left(\frac{L_f a_k^2}{A_{k+1}}-1\right)\|\bz_{k+1}-\bz_k\|^2
 \end{aligned}
 \end{equation}
 Further note that in this case we have $a_k = \frac{\gamma(k+1)}{4L_f}$ and $A_{k+1} = \gamma\frac{(k+1)(k+2)}{8L_f}$. Hence, $a_k^2/A_{k+1}$ is bounded above by $\frac{\gamma}{2L_f} $. 
 Therefore, we can replace $a_k^2/A_{k+1}$ in the above expression by $\frac{\gamma}{2L_f} $ to obtain 
\begin{equation}\label{eq:potential_f_gamma}
\begin{aligned}
 A_{k+1}(\lambda (f(\bx_{k+1})-f(\bx^*))) + \frac{\lambda}{2}\|\bz_{k+1}-\bx^*\|^2 - \left( A_{k}(\lambda(f(\bx_{k})-f(\bx^*))) + \frac{\lambda}{2}\|\bz_{k}-\bx^*\|^2\right) \\
 \leq \frac{\lambda}{2}\left(\frac{\gamma}{2}-1\right)\|\bz_{k+1}-\bz_k\|^2
 \end{aligned}
 \end{equation}
 Similarly, we can replace $a_k^2/A_{k+1}$ in \eqref{eq:potential_g} by $\frac{\gamma}{2L_f} $ to obtain
 \begin{equation}    
  A_{k+1}(g(\bx_{k+1})-g(\bx^*)) - A_{k}(g(\bx_{k})-g(\bx^*)) \leq a_k (g_k-g(\bx^*)) + \frac{\gamma L_g }{4L_f}\|\bz_{k+1}-\bz_k\|^2
 \end{equation}
 Note if we add above up the above inequalities in \eqref{eq:potential_f_general} and \eqref{eq:potential_f_gamma} we obtain
\begin{equation}
\begin{aligned}
   A_{k+1}(\lambda(f(\bx_{k+1})-f(\bx^*)) + g(\bx_{k+1})-g(\bx^*)) + \frac{\lambda}{2}\|\bz_{k+1}-\bx^*\|^2 \\
   -\left( A_{k}(\lambda(f(\bx_{k})-f(\bx^*)) + g(\bx_{k})-g(\bx^*)) + \frac{\lambda}{2}\|\bz_{k}-\bx^*\|^2\right) \\
   \leq (\lambda(\frac{\gamma}{4}-\frac{1}{2})+\frac{\gamma}{4}\frac{L_{g}}{L_{f}})\|\bz_{k+1}-\bz_k\|^2 + a_k (g_k-g(\bx^*)) 
   \leq a_k (g_k-g(\bx^*)) 
\end{aligned}
\end{equation}
Note that the last inequality holds since the first term is negative due to the choice of $\lambda$.
By summing the inequalities from $ 0$ to $k-1$ we obtain
\begin{equation}\label{eq:jjjjj}
    A_{k}(\lambda(f(\bx_{k})-f(\bx^*)) + g(\bx_{k})-g(\bx^*)) + \frac{1}{2}\lambda\|\bz_{k}-\bx^*\|^2  
   \leq\sum_{i=0}^{k-1}a_i (g_i-g(\bx^*)) + \frac{1}{2}\lambda\|\bz_{0}-\bx^*\|^2
\end{equation}
Now using the condition on $g_i$ in \eqref{eq:convergence_g} and the definition of $a_i$ we have 
\begin{equation}
    \sum_{i=0}^{k-1}a_i (g_i-g(\bx^*))\leq \sum_{i=0}^{k-1} \frac{\gamma(i+1)}{4L_f}\frac{2L_g\|\bx_0-\bx^*\|^2}{(i+1)^2} \leq \frac{\gamma L_g}{2L_f}\|\bx_0-\bx^*\|^2 (\ln k+1)
\end{equation}
By applying this upper bound into \eqref{eq:jjjjj} we obtain
\begin{equation}
   A_{k}(\lambda(f(\bx_{k})-f(\bx^*)) + g(\bx_{k})-g(\bx^*)) + \frac{1}{2}\lambda\|\bz_{k}-\bx^*\|^2 
   \leq \frac{\gamma L_g}{2L_f}\|\bx_0-\bx^*\|^2 (\ln k+1) + \frac{1}{2}\lambda\|\bz_{0}-\bx^*\|^2 
\end{equation}
If we drop the $\frac{1}{2}\lambda\|\bz_{k}-\bx^*\|^2$ in the left-hand side and divide both sides of the resulted inequality by $A_{K}$ which is equal to $A_k=\gamma\frac{k(k+1)}{8L_f}$ we obtain
\begin{equation}
   \lambda(f(\bx_{k})-f(\bx^*)) + g(\bx_{k})-g(\bx^*) \leq \frac{4L_g\|\bx_0-\bx^*\|^2 (\ln k+1)}{k(k+1)}  + \frac{4\lambda L_{f}\|\bz_{0}-\bx^*\|^2}{\gamma k(k+1)}, 
\end{equation}
and the proof is complete. 
% By choosing $\lambda = \frac{L_g}{(2/\gamma-1)L_f}$, we obtain,
% \begin{equation}
% \begin{aligned}
%    \lambda(f(\bx_{K})-f(\bx^*)) + g(\bx_{K})-g(\bx^*) \leq \frac{4L_g\|\bx_0-\bx^*\|^2}{K(K+1)} (\ln K+1)  + \frac{4(2-\gamma)L_{g}\|\bz_{0}-\bx^*\|^2}{K(K+1)} 
% \end{aligned}
% \end{equation}
\end{proof}

%%%%%%%%%%%%%%%%%%%%%%%%%%%%%%%%%%%%%%%%%%%%%%%%%%%%%%%%%%%%%%%%%%%%%%%%%%%%%%%
%%%%%%%%%%%%%%%%%%%%%%%%%%%%%%%%%%%%%%%%%%%%%%%%%%%%%%%%%%%%%%%%%%%%%%%%%%%%%%%

\subsection{Proof of Theorem~\ref{thm:upper_lower_hd}}
\begin{proof}[Proof of Theorem~\ref{thm:upper_lower_hd}]

Recall the result of Lemma~\ref{lm:weighted_sum} that  if $a_k = \gamma(k+1)/(4L_f)$, where $0<\gamma\leq1$ and $\lambda \geq \frac{L_g}{(2/\gamma-1)L_f}$  then after $T$ iterations we have 
\begin{equation}
    \lambda(f(\bx_{T})-f(\bx^*)) + g(\bx_{T})-g(\bx^*) 
       \leq \frac{4L_g\|\bx_0-\bx^*\|^2(\ln T+1)}{T(T+1)}   + \frac{4\lambda L_{f}\|\bz_{0}-\bx^*\|^2}{\gamma T(T+1)}.
\end{equation}
Now if we replace $\gamma$ by $ 1/(\frac{2L_g}{L_f}T^{\frac{2r-2}{2r-1}}+2)$ as suggested in the statement of the theorem,  we would obtain
\begin{equation}\label{eq:comb}
    \lambda(f(\bx_{T})-f(\bx^*)) + g(\bx_{T})-g(\bx^*) 
       \leq \frac{4L_g\|\bx_0-\bx^*\|^2(\ln T+1)}{T(T+1)}   + \frac{8 (\frac{L_g}{L_f}T^{\frac{2r-2}{2r-1}}+1)\lambda L_{f}\|\bz_{0}-\bx^*\|^2}{ T(T+1)}.
\end{equation}
Now we proceed to prove the first claim which is an upper bound on $f(\bx_{T})-f(\bx^*)$. Note that given the fact that $g(\bx_{T})-g(\bx^*) >0$ and $\lambda>0$ we can show that  
\begin{equation}
  f(\bx_{T})-f(\bx^*)
       \leq \frac{4L_g\|\bx_0-\bx^*\|^2(\ln T+1)}{\lambda T(T+1)}   + \frac{8( (\frac{L_g}{L_f}T^{\frac{2r-2}{2r-1}}+1)) L_{f}\|\bz_{0}-\bx^*\|^2}{ T(T+1)}.
\end{equation}
If we select $\lambda$ which is a free parameter as $\lambda = T^{-\frac{2r-2}{2r-1}} \geq \frac{L_g}{(2/\gamma-1)L_f}$ then we obtain
\begin{equation}
  f(\bx_{T})-f(\bx^*)
       \leq \frac{4L_g\|\bx_0-\bx^*\|^2(\ln T+1)}{T^{\frac{1}{2r-1}} (T+1)}  + \frac{8L_g \|\bz_{0}-\bx^*\|^2}{ T^{\frac{1}{2r-1}} (T+1)} + \frac{8 L_{f}\|\bz_{0}-\bx^*\|^2}{ T(T+1)}.
\end{equation}
Given the fact that $\bx_0=\bz_0$ we can simplify the upper bound to
\begin{equation}
  f(\bx_{T})-f(\bx^*)
       \leq \frac{12L_g\|\bx_0-\bx^*\|^2(\ln T+1)}{T^{\frac{2r}{2r-1}} }   + \frac{8 L_{f}\|\bz_{0}-\bx^*\|^2}{ T^2}.
\end{equation}

Next, we proceed to establish an upper bound on $g(\bx_{T})-g(\bx^*) $. We will use the following inequality that holds due to the HEB condition and formally stated in Proposition~\ref{pp:holderian}:
\begin{equation}
    f(\mathbf{x}_{T})-f^* \geq -M\left(\frac{r (g(\bx_{T})-g(\bx^*)))}{\alpha}\right)^{\frac{1}{r}}
\end{equation}
Now if we replace this lower bound into \eqref{eq:comb} we would obtain
\begin{equation}\label{eq:comb2}
\begin{aligned}
   &-\lambda M(\frac{r}{\alpha})^{\frac{1}{r}}(g(\bx_{T})-g(\bx^*))^{\frac{1}{r}} + g(\bx_{T})-g(\bx^*) \\
   &\leq \frac{4L_g\|\bx_0-\bx^*\|^2(\ln T+1)}{T(T+1)}   + \frac{8( (\frac{L_g}{L_f}T^{\frac{2r-2}{2r-1}}+1))\lambda L_{f}\|\bz_{0}-\bx^*\|^2}{ T(T+1)}.
   \end{aligned}
\end{equation}
Next we consider two different cases: In the first case we assume $\lambda M(\frac{r}{\alpha})^{\frac{1}{r}}(g(\bx_{T})-g(\bx^*))^{\frac{1}{r}} \leq \frac{1}{2}(g(\bx_{T})-g(\bx^*))$ holds and in the second case we assume the opposite of this inequality holds. 

If we are in the first case and $\lambda M(\frac{r}{\alpha})^{\frac{1}{r}}(g(\bx_{T})-g(\bx^*))^{\frac{1}{r}} \leq \frac{1}{2}(g(\bx_{T})-g(\bx^*))$, then the inequality in \eqref{eq:comb2} leads to 
\begin{equation}\label{eq:comb3}
  \frac{1}{2}( g(\bx_{T})-g(\bx^*)) \leq \frac{4L_g\|\bx_0-\bx^*\|^2(\ln T+1)}{T(T+1)}   + \frac{8( (\frac{L_g}{L_f}T^{\frac{2r-2}{2r-1}}+1))\lambda L_{f}\|\bz_{0}-\bx^*\|^2}{ T(T+1)}.
\end{equation}
Since $\lambda = T^{-\frac{2r-2}{2r-1}}$, it further leads to the following upper bound
\begin{equation}\label{eq:comb4}
 g(\bx_{T})-g(\bx^*) \leq \frac{8L_g\|\bx_0-\bx^*\|^2(\ln T+1)}{T(T+1)}   
 + \frac{16{L_g} \|\bz_{0}-\bx^*\|^2}{ T(T+1)}
  + \frac{16  L_{f}\|\bz_{0}-\bx^*\|^2}{ T^\frac{1}{2r-1}(T+1)}.
\end{equation}
Now given the fact that $\bx_0=\bz_0$, we obtain
\begin{equation}\label{eq:comb5}
 g(\bx_{T})-g(\bx^*) \leq \frac{24L_g\|\bx_0-\bx^*\|^2(\ln T+1)}{T^2}   
  + \frac{16  L_{f}\|\bx_{0}-\bx^*\|^2}{ T^\frac{2r}{2r-1}}.
\end{equation}

If we are in the second case and $\lambda M(\frac{r}{\alpha})^{\frac{1}{r}}(g(\bx_{T})-g(\bx^*))^{\frac{1}{r}} > \frac{1}{2}(g(\bx_{T})-g(\bx^*))$ then this inequality is equivalent to 
\begin{equation}
    (g(\bx_{T})-g(\bx^*))^{1-{1/r}}\leq  2 \lambda M(\frac{r}{\alpha})^{\frac{1}{r}},
\end{equation}
leading to 
\begin{equation}\label{eq:comb6}
    g(\bx_{T})-g(\bx^*)\leq \left( 2 T^{-\frac{2r-2}{2r-1}} M(\frac{r}{\alpha})^{\frac{1}{r}}\right)^{\frac{r}{r-1}}    =\frac{(2M)^{\frac{r}{r-1} }  (\frac{r}{\alpha})^{\frac{1}{r-1}}}{T^{\frac{2r}{2r-1}}}
\end{equation}
By combining the bounds in \eqref{eq:comb5} and \eqref{eq:comb6} we realize that 
% \begin{equation}\label{eq:comb7}
%     g(\bx_{T})-g(\bx^*)\leq \frac{24L_g\|\bx_0-\bx^*\|^2(\ln T+1)}{T^2}+ \frac{(2M)^{\frac{r}{r-1} }  (\frac{r}{\alpha})^{\frac{1}{r-1}} +16  L_{f}\|\bx_{0}-\bx^*\|^2}{T^{\frac{2r}{2r-1}}}
% \end{equation}
\begin{equation}\label{eq:comb7}
    g(\bx_{T})-g(\bx^*)\leq \max\{\frac{24L_g\|\bx_0-\bx^*\|^2(\ln T+1)}{T^2}+ \frac{16  L_{f}\|\bx_{0}-\bx^*\|^2}{T^{\frac{2r}{2r-1}}}, \frac{(2M)^{\frac{r}{r-1} }  (\frac{r}{\alpha})^{\frac{1}{r-1}}}{T^{\frac{2r}{2r-1}}}\}
\end{equation}

Finally by using the above bound in \eqref{eq:comb7} and the result of Proposition~\ref{pp:holderian} we can prove the the second claim and establish a lower bound on $f(\bx_{T})-f^*$ which is 
% \begin{equation}
%     f(\bx_{T})-f^* \geq -M\left(\frac{r}{\alpha}\right)^{\frac{1}{r}}
%     \left(\frac{24L_g\|\bx_0-\bx^*\|^2(\ln T+1)}{T^2}+ \frac{(2M)^{\frac{r}{r-1} }  (\frac{r}{\alpha})^{\frac{1}{r-1}} +16  L_{f}\|\bx_{0}-\bx^*\|^2}{T^{\frac{2r}{2r-1}}}\right)^{1/r}
% \end{equation}
% leading to 
% \begin{equation}
%     f({\mathbf{x}}_T)-f^* \geq -M\left(\frac{r}{\alpha}\right)^{\frac{1}{r}}
%     \left(\frac{(24L_g\|\bx_0-\bx^*\|^2(\ln T+1))^{(1/r)}}{T^{2/r}}+ \frac{((2M)^{\frac{r}{r-1} }  (\frac{r}{\alpha})^{\frac{1}{r-1}} +16  L_{f}\|\bx_{0}-\bx^*\|^2)^{(1/r)}}{T^{\frac{2}{2r-1}}}\right)
% \end{equation}
\begin{equation}
    f(\bx_{T})-f^* \geq -M\left(\frac{r}{\alpha}\right)^{\frac{1}{r}}
    \left(\max\{\frac{24L_g\|\bx_0-\bx^*\|^2(\ln T+1)}{T^2}+ \frac{16  L_{f}\|\bx_{0}-\bx^*\|^2}{T^{\frac{2r}{2r-1}}}, \frac{(2M)^{\frac{r}{r-1} }  (\frac{r}{\alpha})^{\frac{1}{r-1}}}{T^{\frac{2r}{2r-1}}}\}\right)^{1/r}
\end{equation}
leading to 
\begin{equation}
\begin{aligned}
    &f({\mathbf{x}}_T)-f^* \geq \\
    &-M\left(\frac{r}{\alpha}\right)^{\frac{1}{r}}
    \left(\max\{\frac{(24L_g\|\bx_0-\bx^*\|^2(\ln T+1))^{1/r}}{T^{2/r}}+ \frac{(16  L_{f}\|\bx_{0}-\bx^*\|^2)^{1/r}}{T^{\frac{2}{2r-1}}}, \frac{((2M)^{\frac{r}{r-1} }  (\frac{r}{\alpha})^{\frac{1}{r-1}})^{1/r}}{T^{\frac{2}{2r-1}}}\}\right)
\end{aligned}
\end{equation}
\end{proof}

\subsection{Proof of Theorem~\ref{pp:upper_lower_ws}}
\begin{proof}[Proof of Theorem~\ref{pp:upper_lower_ws}]
    To upper bound $f(\bx_k)-f(\bx^*)$, we follow a similar analysis as in Theorem~\ref{thm:upper_lower}. Specifically,
    first note that by our choice of $a_k$, we have 
    $a_k = \gamma \frac{k+1}{4L_f}$ and $A_{k+1} = \gamma\frac{(k+1)(k+2)}{8L_f}$, where $\gamma \in (0,1)$. 
Hence, we can obtain that ${L_f a_k^2} \leq \frac{\gamma}{2}A_{k+1}$. By using Lemma~\ref{lem:potential} and the fact that $\gamma \in (0,1)$, we have 
\begin{equation*}
\begin{aligned}
    A_{k+1}(f(\bx_{k+1})-f(\bx^*)) + \frac{1}{2}\|\bz_{k+1}-\bx^*\|^2 - \Bigl( A_{k}(f(\bx_{k})-f(\bx^*)) + \frac{1}{2}\|\bz_{k}-\bx^*\|^2\Bigr) \\
    \leq \left(\frac{\gamma}{4}-\frac{1}{2}\right)\|\bz_{k+1}-\bz_k\|^2 \leq 0.
\end{aligned}
\end{equation*}
By using induction, we obtain that for any $k\geq 0$ 
\begin{equation}
    A_{k}(f(\bx_{k})-f(\bx^*)) + \frac{1}{2}\|\bz_{k}-\bx^*\|^2 \leq A_{0}(f(\bx_{0})-f(\bx^*)) + \frac{1}{2}\|\bz_{0}-\bx^*\|^2  = \frac{1}{2}\|\bz_{0}-\bx^*\|^2,
\end{equation}
Since $A_k = \gamma\frac{k(k+1)}{8L_f}$ and $\bz_0 = \bx_0$, this further implies that 
% in Theorem~\ref{thm:upper_lower} we obtained,
    \begin{equation*}
    f(\bx_{k})-f(\bx^*) \leq \frac{\|\bz_{0}-\bx^*\|^2}{2 A_k} = \frac{4L_f\|\bx_{0}-\bx^*\|^2}{\gamma k(k+1)}.
    \end{equation*}
    % The last equality holds by plugging the choice of $A_K = \gamma\frac{K(K+1)}{8L_f}$.
    
    Next, we will prove the upper bound on $g(\bx_k)-g(\bx^*)$. By Lemma~\ref{lm:weighted_sum}, we have for any $k\geq 0$
    %%%
    \begin{equation}\label{eq:weak_sharp_weighted}
       \lambda(f(\bx_{k})-f(\bx^*)) + g(\bx_{k})-g(\bx^*) \leq \frac{4L_g\|\bx_0-\bx^*\|^2}{k(k+1)} (\ln k+1)  + \frac{4\lambda L_{f}\|\bx_{0}-\bx^*\|^2}{\gamma k(k+1)} 
    \end{equation}
    Moreover, since $g$ satisfies the weak sharpness condition, we can use Proposition~\ref{pp:holderian} with $r = 1$ to write 
    \begin{equation}\label{eq:weak_sharp_bound}
        f(\bx_{k})-f^* \geq -\frac{M}{\alpha}(g(\bx_{k})-g^*).
    \end{equation}
    Combining \eqref{eq:weak_sharp_weighted} and \eqref{eq:weak_sharp_bound} leads to
%%%%
\begin{equation}\label{eq:before_setting_lambda}
       -\lambda\frac{M}{\alpha}(g(\bx_{k})-g(\bx^*)) + g(\bx_{k})-g(\bx^*) \leq \frac{4L_g\|\bx_0-\bx^*\|^2}{k(k+1)} (\ln k+1)  + \frac{4\lambda L_{f}\|\bx_{0}-\bx^*\|^2}{\gamma k(k+1)} 
    \end{equation}
    Note that we can choose $\lambda$ to be any number satisfying $\lambda \geq \frac{L_g}{(2/\gamma-1)L_f}$ (cf. Lemma~\ref{lm:weighted_sum}). 
    Specifically, since $\gamma \leq \frac{2\alpha L_f}{2ML_g+\alpha L_f}$, we can set $\lambda = \alpha/(2M)$ and accordingly \eqref{eq:before_setting_lambda} can be simplified to  % ($\lambda$ can be set to this value ), 
    % we obtain the upper bound of lower-level suboptimality,
    \begin{equation*}
        g(\bx_{k})-g(\bx^*) \leq \frac{8L_g\|\bx_0-\bx^*\|^2}{k(k+1)} (\ln k+1)  + \frac{4\alpha L_{f}\|\bx_{0}-\bx^*\|^2}{\gamma Mk(k+1)}.
    \end{equation*}
    Finally, we use \eqref{eq:weak_sharp_bound} again together with the above upper bound on $ g(\bx_{k})-g(\bx^*)$ to obtain 
    \begin{equation*}
    f(\bx_k) - f(\bx^*) \geq -\frac{M}{\alpha}(g(\bx_k)-g(\bx^*)) \geq -\left(\frac{8ML_g\|\bx_0-\bx^*\|^2}{\alpha k(k+1)} (\ln k+1) +\frac{4L_f\|\bx_{0}-\bx^{*}\|^2}{\gamma k(k+1)}\right).
    \end{equation*}
    % We can bound upper-level suboptimality below.
\end{proof}

\section{Extension to the Non-smooth/Composite Setting}\label{sec:extension}
In this section, we would like to mention the possible extension to the non-smooth/composite setting. In the general non-smooth settings, we believe it is not possible to extend our results and achieve the purpose of the acceleration. This is because, even in the single-level
setting, the best achievable rate in the general non-smooth setting is $\mathcal{O}(1/\sqrt{K})$ achieved by sub-gradient method. That said, it should be possible to extend our accelerated bilevel framework to a special non-smooth setting where the upper- and lower-level objective functions have a composite structure, i.e., they can be written as the sum of a convex smooth function and a convex non-smooth function that is easy to compute its proximal operator.

Then we consider the composite counterpart of Problem~\eqref{eq:bi-simp}:
\begin{equation}\label{eq:prox-bi-simp}
    \min_{\bx\in \reals^n}~f(\bx):= f_{1}(\bx) + f_{2}(\bx)\qquad \hbox{s.t.}\quad  \bx\in\argmin_{\bz\in \reals^n}~g(\bz):= g_{1}(\bz)+g_{2}(\bz),
\end{equation} 
where $f_1,g_1:\reals^n\to \reals$ are smooth convex functions and $f_2,g_2:\reals^n\to \reals$ are nonsmooth convex functions, respectively. 

To analyze and implement the proximal gradient-based methods, we need the following definition concerning the property of the proximal mapping.

\begin{definition}\label{def:prox}
    Given a function $h : \reals^{n} \to (-\infty, +\infty]$, the proximal map of $h$ is defined for all $\bx \in \reals^{n}$ and $\eta > 0$ as
    \begin{equation}
        \textit{Prox}_{\eta h}(\bx) \triangleq \argmin_{\bu \in \reals^{n}}\{\frac{1}{2\eta}\|\bu-\bx\|^{2}+h(\bu)\}
    \end{equation}
\end{definition}

To handle the upper-level nonsmooth part $f_{2}$, we need to change the projection step in Step 6 of Algorithm~\ref{alg:AGM-BiO} to a proximal update (outlined in the Step 6 of Algorithm~\ref{alg:P-AGM-BiO}), which is similar to the accelerated proximal gradient method for single-level problems in \cite{beck2009fast}. 
% Note that we need to assume $f_{2}$ is proximal-friendly, i.e. the proximal map of $f_{2}$ is easy to compute for all $t > 0$, which is a common assumption for proximal gradient-based methods like \citet{beck2009fast, samadi2023achieving, chen2024penalty}.
On the other hand, to deal with the lower-level nonsmooth part $g_{2}$, it is necessary to modify the approximated set of the lower-level solution set $\mathcal{X}_{k}$. Specifically, since $g_{2}$ is nonsmooth,  $\nabla g$ may not be obtained over the whole domain. Hence, we utilize $g_{1}$ and $g_{2}$ separately in the construction of $\mathcal{X}_{k}$ by summing a linear approximation of $g_{1}$ and $g_{2}$ up as a lower bound of $g_{k}$. Note that the constructed set $\mathcal{X}_{k}$ is no longer a hyperplane in this setting due to the possibly non-linear nature of $g_{2}$. We refer to our method as the Proximal Accelerated Gradient Method for Bilevel Optimization (P-AGM-BiO) and its steps are outlined in Algorithm~\ref{alg:P-AGM-BiO}.

\begin{algorithm}[t!]
	\caption{Proximal Accelerated Gradient Method for Bilevel Optimization (P-AGM-BiO)}\label{alg:P-AGM-BiO}
	\begin{algorithmic}[1]
	\STATE \textbf{Input}: A sequence $\{g_k\}_{k=0}^K$, a scalar $\gamma \in (0,1]$
	\STATE \textbf{Initialization}: $A_0 = 0$, $\bx_0=\bz_0\in \reals^n$
	\FOR{$k = 0,\dots,K$}
	    \STATE Set  $\quad \displaystyle{a_{k} = \gamma\frac{k+1}{4L_{f}}}$
     \STATE Compute 
     $\quad \displaystyle{\by_k = \frac{A_k}{A_k+a_k} \bx_k + \frac{a_k}{A_k+a_k} \bz_k}$

     \vspace{1mm}
     \STATE Compute 
 $\quad \displaystyle{
        \bz_{k+1} = Prox_{a_{k}(f_{2}+\delta_{{\mathcal{X}_k}})}(\bz_k-a_k \nabla f_{1}(\by_k))}$, where $$ \displaystyle{ \mathcal{X}_k \triangleq \{\bz\in \reals^{n}: g_{1}(\by_k)+\fprod{\nabla g_{1}(\by_k),\bz-\by_k} + g_{2}(\bz)}\leq g_k\}$$

      \vspace{-2mm}
     \STATE Compute    $\quad \displaystyle{ \bx_{k+1} = \frac{A_k}{A_k+a_k} \bx_k+\frac{a_k}{A_k+a_k} \bz_{k+1}}$ 
  \vspace{1mm}
		\STATE Update $A_{k+1} = A_k + a_k$
	\ENDFOR
 
    \STATE \textbf{Return:} $\bx_{K}$ 
	\end{algorithmic}
\end{algorithm}

% \begin{algorithm}[t!]
% \caption{Proximal Accelerated Gradient Method for Bilevel Optimization (P-AGM-BiO)}\label{alg:P-AGM-BiO}
% \begin{algorithmic}[1]
% \State \textbf{Input}: A sequence $\{g_k\}_{k=0}^K$, a scalar $\gamma \in (0,1]$
% \State \textbf{Initialization}: $A_0 = 0$, $\bx_0=\bz_0\in \reals^{n}$
% \For{$k = 0,\dots,K$}
%     \State Set  $\quad \displaystyle{a_{k} = \gamma\frac{k+1}{4L_{f}}}$
%  \State Compute 
%  $\quad \displaystyle{\by_k = \frac{A_k}{A_k+a_k} \bx_k + \frac{a_k}{A_k+a_k} \bz_k}$

%  \vspace{1mm}
%  \State Compute 
%  $\quad \displaystyle{
%         \bz_{k+1} = Prox_{a_{k}(f_{2}+\delta_{{\mathcal{X}_k}})}(\bz_k-a_k \nabla f_{1}(\by_k))}$, where $$ \displaystyle{ \mathcal{X}_k \triangleq \{\bz\in \reals^{n}: g_{1}(\by_k)+\fprod{\nabla g_{1}(\by_k),\bz-\by_k} + g_{2}(\bz)}\leq g_k\}$$

%   \vspace{-2mm}
%  \State Compute    $\quad \displaystyle{ \bx_{k+1} = \frac{A_k}{A_k+a_k} \bx_k+\frac{a_k}{A_k+a_k} \bz_{k+1}}$ 
% \vspace{1mm}
%     \State Update $\quad A_{k+1} = A_k + a_k$
% \EndFor

% \State \textbf{Return:} $\bx_{K}$ 
% \end{algorithmic}                      
% \end{algorithm}

Different proximal-friendly assumptions are commonly used in the literature of composite single-level/bilevel optimization \cite{beck2009fast, samadi2023achieving, wang2024near, chen2024penalty}. The following proximal-friendly assumption is necessary for our method in the composite setting. 
\begin{assumption}\label{ass:additional}
    The function $f_{2} + \delta_{\mathcal{X}_{k}}$ in the Step 6 of Algorithm~\ref{alg:P-AGM-BiO} is proximal-friendly, i.e. the proximal mapping in Definition~\ref{def:prox} is easy to compute, for all $k$, where $\delta_{\mathcal{X}_{k}}(\cdot)$ is the indicator function.
\end{assumption}

This assumption 
% is similar to \citet[Assumption 1(iv)]{wang2024near}, which essentially 
implies that 
$f_{2}$ is proximal-friendly and projecting onto the constructed set $\mathcal{X}_{k}$ is easy. Moreover, The function $f_{2} + \delta_{\mathcal{X}_{k}}$ is the sum of two convex functions, and the study of proximal mapping for sums of functions can be found in the literature \cite{yu2013decomposing, pustelnik2017proximity, bauschke2018projecting, adly2019decomposition}.

Note that the properties of smoothness of $f$ and $g$ have only been used in the proof of Lemma~\ref{lem:potential} in Section~\ref{sec:analysis}. None of the other results will break if \eqref{eq:potential_f} and \eqref{eq:potential_g} in Lemma~\ref{lem:potential} still hold in the composite setting. Now, we present and prove the counterpart of  Lemma~\ref{lem:potential} in the composite setting.

\begin{lemma}\label{lem:potential_prox}
% Given the choice of $a_k$ in Theorem~\ref{thm:upper_lower}, 
Suppose $f_1, f_2, g_1, g_2$ are convex and $f_1, g_1$ are $L_{f}$-smooth and $L_{g}$-smooth, respectively. Let $\{\bx_k\}$ be the sequence of iterates generated by Algorithm~\ref{alg:P-AGM-BiO} with stepsize $a_k>0$ for
$k \geq 0$. Moreover, suppose Assumption~\ref{ass:additional} holds. Then we have 
% \begin{gather}
\begin{equation}
    \begin{aligned}
        A_{k+1}(f(\bx_{k+1})-f(\bx^*)) + \frac{1}{2}\|\bz_{k+1}-\bx^*\|^2 -& \Bigl( A_{k}(f(\bx_{k})-f(\bx^*)) + \frac{1}{2}\|\bz_{k}-\bx^*\|^2\Bigr) \\
        &\leq \left(\frac{L_f a_k^2}{2A_{k+1}}-\frac{1}{2}\right)\|\bz_{k+1}-\bz_k\|^2, \\
    \end{aligned}
\end{equation}
   % A_{k+1}(f(\bx_{k+1})-f(\bx^*)) + \frac{1}{2}\|\bz_{k+1}-\bx^*\|^2 - \Bigl( A_{k}(f(\bx_{k})-f(\bx^*)) + \frac{1}{2}\|\bz_{k}-\bx^*\|^2\Bigr) \\
   % \leq \left(\frac{L_f a_k^2}{2A_{k+1}}-\frac{1}{2}\right)\|\bz_{k+1}-\bz_k\|^2, \label{eq:potential_f}\\
\begin{equation}
     A_{k+1}(g(\bx_{k+1})-g(\bx^*)) - A_{k}(g(\bx_{k})-g(\bx^*)) \leq a_k (g_k-g(\bx^*)) + \frac{L_g a^2_{k}}{2A_{k+1}}\|\bz_{k+1}-\bz_k\|^2.   
\end{equation}

% \end{gather}
\end{lemma}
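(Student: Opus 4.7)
The plan is to adapt the proof of Lemma~\ref{lem:potential} by treating the smooth components $f_1,g_1$ exactly as before, while handling the non-smooth components $f_2,g_2$ through a combination of Jensen's inequality (applied to the convex combination defining $\bx_{k+1}$) and the subdifferential-based optimality condition of the proximal step. I expect the $g$ inequality to be the easier of the two, since the non-smooth term $g_2$ is directly baked into the definition of $\cX_k$, so the inclusion $\bz_{k+1}\in\cX_k$ gives us exactly the bound we need. The $f$ inequality is the main obstacle because the proximal optimality condition now produces a subgradient $s_{k+1}\in\partial f_2(\bz_{k+1})$ that must be absorbed cleanly alongside $\nabla f_1(\by_k)$ and the normal cone of $\cX_k$.

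For part (ii) (the lower level), I would start from the $L_g$-smoothness of $g_1$, writing the descent inequality $g_1(\bx_{k+1})\le g_1(\by_k)+\langle\nabla g_1(\by_k),\bx_{k+1}-\by_k\rangle+\tfrac{L_g}{2}\|\bx_{k+1}-\by_k\|^2$, multiply by $A_{k+1}$, and combine with the convexity inequality $A_k(g_1(\by_k)-g_1(\bx_k))\le A_k\langle\nabla g_1(\by_k),\by_k-\bx_k\rangle$. Using the identities $A_k(\by_k-\bx_k)=a_k(\bz_k-\by_k)$ and $A_{k+1}(\bx_{k+1}-\by_k)=a_k(\bz_{k+1}-\bz_k)$, the cross terms telescope into $a_k\langle\nabla g_1(\by_k),\bz_{k+1}-\by_k\rangle$. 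For $g_2$, I would apply Jensen's inequality to $\bx_{k+1}=\tfrac{A_k}{A_{k+1}}\bx_k+\tfrac{a_k}{A_{k+1}}\bz_{k+1}$ to get $A_{k+1}g_2(\bx_{k+1})-A_kg_2(\bx_k)\le a_k g_2(\bz_{k+1})$. Adding the two estimates and using the fact that $\bz_{k+1}\in\cX_k$ directly yields $g_1(\by_k)+\langle\nabla g_1(\by_k),\bz_{k+1}-\by_k\rangle+g_2(\bz_{k+1})\le g_k$, which gives the claimed inequality after subtracting $a_kg(\bx^*)$.

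For part (i) (the upper level), I would first run the analogous smoothness/convexity argument on $f_1$ to obtain $A_{k+1}f_1(\bx_{k+1})-A_kf_1(\bx_k)-a_kf_1(\bx^*)\le a_k\langle\nabla f_1(\by_k),\bz_{k+1}-\bx^*\rangle+\tfrac{L_f a_k^2}{2A_{k+1}}\|\bz_{k+1}-\bz_k\|^2$, again via the three-point identities. Then Jensen on $f_2$ gives $A_{k+1}f_2(\bx_{k+1})-A_kf_2(\bx_k)-a_kf_2(\bx^*)\le a_k(f_2(\bz_{k+1})-f_2(\bx^*))$. The critical step is to invoke the optimality condition of the prox step: there exist $s_{k+1}\in\partial f_2(\bz_{k+1})$ and $\nu_{k+1}\in N_{\cX_k}(\bz_{k+1})$ with $\bz_{k+1}-\bz_k+a_k\nabla f_1(\by_k)+a_k s_{k+1}+\nu_{k+1}=\bo$. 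Taking the inner product with $\bx^*-\bz_{k+1}$ and using $\bx^*\in\cX_k$ (which, as in the smooth case, follows from $\bx^*\in\cX_g^*$ together with convexity of $g_1,g_2$ and the bound $g_k\ge g^*$) kills the normal-cone term and yields
\begin{equation*}
a_k\langle\nabla f_1(\by_k),\bz_{k+1}-\bx^*\rangle+a_k\langle s_{k+1},\bz_{k+1}-\bx^*\rangle\le \tfrac{1}{2}\|\bz_k-\bx^*\|^2-\tfrac{1}{2}\|\bz_{k+1}-\bx^*\|^2-\tfrac{1}{2}\|\bz_{k+1}-\bz_k\|^2.
\end{equation*}
Finally, subgradient convexity gives $f_2(\bz_{k+1})-f_2(\bx^*)\le \langle s_{k+1},\bz_{k+1}-\bx^*\rangle$, so the $s_{k+1}$ term exactly dominates the $f_2$ Jensen remainder. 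Summing the $f_1$ and $f_2$ pieces and substituting produces the claimed inequality.

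The key subtlety, and likely the main obstacle, is verifying $\bx^*\in\cX_k$ in the composite setting so that the normal cone inequality activates at $\bx^*$; this requires checking that $g_1(\by_k)+\langle\nabla g_1(\by_k),\bx^*-\by_k\rangle+g_2(\bx^*)\le g(\bx^*)\le g^*\le g_k$, which follows from convexity of $g_1$ and the stipulation that $g_k\ge g^*$ exactly as in Section~\ref{sec:algorithm}. Once this is in place, everything reduces to algebraic manipulation identical in spirit to the smooth proof.
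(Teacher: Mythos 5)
Your proposal is correct and follows essentially the same route as the paper's proof: smoothness plus convexity for $f_1,g_1$, Jensen's inequality on the convex combination $\bx_{k+1}=\frac{A_k}{A_{k+1}}\bx_k+\frac{a_k}{A_{k+1}}\bz_{k+1}$ for $f_2,g_2$, the membership $\bz_{k+1}\in\cX_k$ for the lower level, and the prox optimality condition together with $\bx^*\in\cX_k$ for the upper level. The only cosmetic difference is that you split $\partial(f_2+\delta_{\cX_k})$ into $\partial f_2+N_{\cX_k}$ (valid here since $f_2$ is finite-valued, so the Moreau--Rockafellar sum rule applies), whereas the paper applies the subgradient inequality to the sum $f_2+\delta_{\cX_k}$ directly.
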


\begin{proof}[Proof of Lemma~\ref{lem:potential_prox}]
Let $\bx^*$ be any optimal solution of \eqref{eq:prox-bi-simp}. 
% Note that $\bx^{*} \in \mathcal{X}_{k}$, for all $k$. By Lemma~\ref{lem:potential}, we have
% \begin{equation}
%     \begin{aligned}
%         A_{k+1}(f_1(\bx_{k+1})-f_1(\bx^*)) + \frac{1}{2}\|\bz_{k+1}-\bx^*\|^2 -& \Bigl( A_{k}(f_1(\bx_{k})-f_1(\bx^*)) + \frac{1}{2}\|\bz_{k}-\bx^*\|^2\Bigr) \\
%         &\leq \left(\frac{L_f a_k^2}{2A_{k+1}}-\frac{1}{2}\right)\|\bz_{k+1}-\bz_k\|^2, \\
%     \end{aligned}
% \end{equation}
%    % A_{k+1}(f(\bx_{k+1})-f(\bx^*)) + \frac{1}{2}\|\bz_{k+1}-\bx^*\|^2 - \Bigl( A_{k}(f(\bx_{k})-f(\bx^*)) + \frac{1}{2}\|\bz_{k}-\bx^*\|^2\Bigr) \\
%    % \leq \left(\frac{L_f a_k^2}{2A_{k+1}}-\frac{1}{2}\right)\|\bz_{k+1}-\bz_k\|^2, \label{eq:potential_f}\\
% \begin{equation}
%      A_{k+1}(g_1(\bx_{k+1})-g_1(\bx^*)) - A_{k}(g_1(\bx_{k})-g_1(\bx^*)) \leq a_k (g_k-g_1(\bx^*)) + \frac{L_g a^2_{k}}{2A_{k+1}}\|\bz_{k+1}-\bz_k\|^2.   
% \end{equation}
% Note that

% Let $\bx^*$ be any optimal solution of \eqref{eq:prox-bi-simp}. 
We first consider the upper-level objective $f$. Since $f_1$ is convex, we have 
\begin{equation}\label{eq:convex_ineq_prox}
    f_1(\by_k) - f_1(\bx^*) \leq \fprod{\grad f_1(\by_k),\by_k-\bx^*},\quad 
    f_1(\by_k) - f_1(\bx_k) \leq \fprod{\grad f_1(\by_k),\by_k-\bx_k}.
\end{equation}
Now given the update rule $A_{k+1}=A_k+a_k$, we can write
\begin{equation}\label{eq:weighted_inequality_prox}
    A_{k+1}(f_1(\by_k)-f_1(\bx^*)) - A_k(f_1(\bx_k)-f_1(\bx^*)) = a_k(f_1(\by_k) - f_1(\bx^*))+A_k(f_1(\by_k) - f_1(\bx_k))
\end{equation}
Combining \eqref{eq:convex_ineq_prox} and \eqref{eq:weighted_inequality_prox}, we have 
\begin{equation}\label{eq:anytime_online_batch_prox}
    \begin{aligned}
    A_{k+1}(f_1(\by_k)-f_1(\bx^*)) - A_k(f_1(\bx_k)-f_1(\bx^*)) 
    &\leq  a_k(\fprod{\grad f_1(\by_k),\by_k-\bx^*})+A_k(\fprod{\grad f_1(\by_k),\by_k-\bx_k}) \\
    &= \fprod{\grad f_1(\by_k), a_k \by_k +A_k(\by_k-\bx_k)-a_k\bx^*} \\
    &= a_k \fprod{\grad f_1(\by_k),\bz_k-\bx^*},
\end{aligned}
\end{equation}
where the last equality follows from the definition of $\by_k$. Furthermore, since $f_1$ is $L_f$-smooth, we have 
\begin{equation}\label{eq:f_smooth_prox}
    f_1(\bx_{k+1}) \leq f_1(\by_k)+\fprod{\grad f_1(\by_k),\bx_{k+1}-\by_k}+\frac{L_f}{2}\|\bx_{k+1}-\by_k\|^2.
\end{equation}
If we multiply both sides of \eqref{eq:f_smooth_prox} by $A_{k+1}$ and combine the resulting inequality with \eqref{eq:anytime_online_batch_prox}, we obtain
\begin{equation}\label{eq:potential_prox}
    \begin{aligned}
    &\phantom{{}\leq{}} A_{k+1}(f_1(\bx_{k+1})-f_1(\bx^*)) - A_k(f_1(\bx_k)-f_1(\bx^*))\\
    & \leq a_k \fprod{\grad f_1(\by_k),\bz_k-\bx^*}+ A_{k+1} \fprod{\grad f_1(\by_k),\bx_{k+1}-\by_k}+\frac{L_f A_{k+1}}{2}\|\bx_{k+1}-\by_k\|^2 \\
    & = a_k \fprod{\grad f_1(\by_k),\bz_k-\bx^*}+ a_{k} \fprod{\grad f_1(\by_k),\bz_{k+1}-\bz_k}+\frac{L_fa_k^2}{2A_{k+1}}\|\bz_{k+1}-\bz_k\|^2 \\
    & = a_k \fprod{\grad f_1(\by_k),\bz_{k+1}-\bx^*}+\frac{L_fa_k^2}{2A_{k+1}}\|\bz_{k+1}-\bz_k\|^2,
\end{aligned}
\end{equation}
where we used the fact that $a_k(\bz_{k+1}-\bz_k)=A_{k+1}(\bx_{k+1}-\by_k)$ in the first equality. 
Moreover, from the step 6 in Algorithm~\ref{alg:P-AGM-BiO}, we have $\bz_{k} - a_k\nabla f_1(\by_{k}) - \bz_{k+1} \in a_{k}\partial(f_{2}(\bz_{k+1})+\delta_{\mathcal{X}_{k}}(\bz_{k+1}))$. Using this, from the definition of subgradients for $f_{2}+\delta_{\mathcal{X}_k}$, we have
\begin{equation}
   \begin{aligned}\label{eq:prox}
   &\fprod{\bx^{*}-\bz_{k+1}, \bz_{k} - a_k\nabla f_1(\by_{k}) - \bz_{k+1}} \leq a_{k}(f_{2}(\bx^*)+\delta_{\mathcal{X}_{k}}(\bx^*)-f_{2}(\bz_{k+1})-\delta_{\mathcal{X}_{k}}(\bz_{k+1}))\\
    \Leftrightarrow \quad &\fprod{\bz_{k+1}-\bz_k+a_k\nabla f(\by_k),\bx^*-\bz_{k+1}} \geq a_kf_2(\bz_{k+1})-a_kf_2(\bx^*)\\
    \Leftrightarrow \quad & a_k\fprod{\nabla f(\by_k),\bz_{k+1}-\bx^*} \leq \fprod{\bz_{k+1}-\bz_k,\bx^*-\bz_{k+1}} - a_kf_2(\bz_{k+1})+a_kf_2(\bx^*)\\
    \Leftrightarrow \quad & a_k\fprod{\nabla f(\by_k),\bz_{k+1}-\bx^*} \leq \frac{1}{2}\|\bz_k-\bx^*\|^2-\frac{1}{2}\|\bz_{k+1}-\bx^*\|^2-\frac{1}{2}\|\bz_{k+1}-\bz_k\|^2\\
    &\qquad \qquad \qquad \qquad \qquad \quad-a_kf_2(\bz_{k+1})+a_kf_2(\bx^*).
\end{aligned} 
\end{equation}
The first step holds since $\bx^*, \bz_{k+1} \in \mathcal{X}_{k}$, i.e. $\delta_{\mathcal{X}_{k}}(\bx^*) = \delta_{\mathcal{X}_{k}}(\bz_{k+1}) = 0$.
Combining \eqref{eq:potential_prox} and \eqref{eq:prox} leads to 
\begin{equation}
\begin{aligned}
    &A_{k+1}(f_1(\bx_{k+1})-f_1(\bx^*)) + \frac{1}{2}\|\bz_{k+1}-\bx^*\|^2 - \left( A_{k}(f_1(\bx_{k})-f_1(\bx^*)) + \frac{1}{2}\|\bz_{k}-\bx^*\|^2\right) \\
    &\leq \frac{1}{2}\left(\frac{L_f a_k^2}{A_{k+1}}-1\right)\|\bz_{k+1}-\bz_k\|^2 -a_kf_2(\bz_{k+1})+a_kf_2(\bx^*),
\end{aligned}
\end{equation}
Then we add $(A_{k+1}f_2(\bx_{k+1}) - a_kf_{2}(\bx^*) -A_kf_{2}(\bx_{k}))$ on both sides to obtain, 
\begin{equation}
\begin{aligned}
    &A_{k+1}(f(\bx_{k+1})-f(\bx^*)) + \frac{1}{2}\|\bz_{k+1}-\bx^*\|^2 - \left( A_{k}(f(\bx_{k})-f(\bx^*)) + \frac{1}{2}\|\bz_{k}-\bx^*\|^2\right) \\
    &\leq \frac{1}{2}\left(\frac{L_f a_k^2}{A_{k+1}}-1\right)\|\bz_{k+1}-\bz_k\|^2 -a_kf_2(\bz_{k+1})+ A_{k+1}f_2(\bx_{k+1}) -A_kf_{2}(\bx_{k}),
\end{aligned}
\end{equation}
Finally, by the convexity of $f_2$, $A_{k+1} = A_{k}+a_{k}$, and $\bx_{k+1} = \frac{A_k}{A_k+a_k} \bx_k+\frac{a_k}{A_k+a_k} \bz_{k+1}$, i.e. $-a_kf_2(\bz_{k+1})+ A_{k+1}f_2(\bx_{k+1}) -A_kf_{2}(\bx_{k}) \leq 0$, the first inequality of this Lemma can be obtained.

Next, we proceed to prove the claim for the lower-level objective $g$. To do so, we first leverage the convexity of the smooth part $g_1$ which leads to
\begin{equation}\label{eq:convex_ineq_g1}
    g_1(\by_k) - g_1(\bx_k) \leq \fprod{\grad g_1(\by_k),\by_k-\bx_k}.
\end{equation}
Also, since $g_1$ is $L_g$-smooth, we have 
\begin{equation}\label{eq:g1_smooth}
    g_1(\bx_{k+1}) \leq g_1(\by_k)+\fprod{\grad g_1(\by_k),\bx_{k+1}-\by_k}+\frac{L_g}{2}\|\bx_{k+1}-\by_k\|^2.
\end{equation}
By multiplying both sides of \eqref{eq:convex_ineq_g1} and \eqref{eq:g1_smooth} by $A_{k}$ and $A_{k+1}$, respectively, and adding the resulted inequalities we obtain 
\begin{align*}
    &\phantom{{}={}}A_{k+1} (g_1(\bx_{k+1})-g_1(\by_k)) + A_k (g_1(\by_k)-g_1(\bx_k))\\
    &\leq A_{k+1} \fprod{\grad g_1(\by_k),\bx_{k+1}-\by_k}+ A_k \fprod{\grad g_1(\by_k),\by_k-\bx_k} + \frac{L_g A_{k+1}}{2}\|\bx_{k+1}-\by_k\|^2 \\
        &= a_k \fprod{\grad g_1(\by_k),\bz_{k+1}-\bz_k}+ A_k \fprod{\grad g_1(\by_k),\by_k-\bx_k} + \frac{L_g a^2_{k}}{2A_{k+1}}\|\bz_{k+1}-\bz_k\|^2 \\
    & = a_k \fprod{\grad g_1(\by_k),\bz_{k+1}-\by_k}+ \frac{L_g a^2_{k}}{2A_{k+1}}\|\bz_{k+1}-\bz_k\|^2,
\end{align*}
where the first equality holds since $a_k(\bz_{k+1}-\bz_k)=A_{k+1}(\bx_{k+1}-\by_k)$, and the second equality holds since  $a_k(\bz_k-\by_k)=A_k(\by_k-\bx_k)$. Lastly, by the definition of the constructed approximated set $\mathcal{X}_{k}$, we know that $g_1(\by_k)+\fprod{\nabla g_1(\by_k),\bz-\by_k} + g_2(\bz)\leq g_k$ for any $\bz \in \mathcal{X}_{k}$.  Hence,  $\fprod{\nabla g_1(\by_k),\bz_{k+1}-\by_k}$ is upper bounded by $g_k-g_1(\by_k)-g_2(\bz_{k+1})$. Applying this substitution into to the above expression to obtain, 
\begin{equation}
    \begin{aligned}
        &\phantom{{}={}}A_{k+1} (g_1(\bx_{k+1})-g_1(\by_k)) + A_k (g_1(\by_k)-g_1(\bx_k))\\
        & \leq a_kg_k - a_kg_1(\by_k) - a_kg_2(\bz_{k+1}) + \frac{L_g a^2_{k}}{2A_{k+1}}\|\bz_{k+1}-\bz_k\|^2
    \end{aligned}
\end{equation}
By adding $a_kg_1(\by_k) - a_kg_1(\bx^*)$ on both sides, we have, 
\begin{equation}
    \begin{aligned}
        &\phantom{{}={}}A_{k+1} (g_1(\bx_{k+1})-g_1(\bx^*)) + A_k (g_1(\bx^*)-g_1(\bx_k))\\
        & \leq a_kg_k - a_kg_1(\bx^*) - a_kg_2(\bz_{k+1}) + \frac{L_g a^2_{k}}{2A_{k+1}}\|\bz_{k+1}-\bz_k\|^2
    \end{aligned}
\end{equation}
Lastly, we add $(A_{k+1}g_2(\bx_{k+1}) - a_kg_{2}(\bx^*) -A_kg_{2}(\bx_{k}))$ on both sides to obtain,
\begin{equation}
    \begin{aligned}
        &\phantom{{}={}}A_{k+1} (g(\bx_{k+1})-g(\bx^*)) + A_k (g(\bx^*)-g(\bx_k))\\
        & \leq a_kg_k - a_kg(\bx^*) + A_{k+1}g_2(\bx_{k+1}) -A_kg_{2}(\bx_{k}) - a_kg_2(\bz_{k+1}) + \frac{L_g a^2_{k}}{2A_{k+1}}\|\bz_{k+1}-\bz_k\|^2
    \end{aligned}
\end{equation}
By the convexity of $g_2$, $A_{k+1} = A_{k}+a_{k}$, and $\bx_{k+1} = \frac{A_k}{A_k+a_k} \bx_k+\frac{a_k}{A_k+a_k} \bz_{k+1}$ (outlined in Algorithm~\ref{alg:P-AGM-BiO}), i.e. $ A_{k+1}g_2(\bx_{k+1}) -A_kg_{2}(\bx_{k})-a_kg_2(\bz_{k+1}) \leq 0$, the second inequality of this Lemma can be achieved.
\end{proof}
Hence, with the additional Assumption~\ref{ass:additional}, by replicating the analysis outlined in Section~\ref{sec:analysis}, we can derive identical complexity results for Algorithm~\ref{alg:P-AGM-BiO} in either the compact domain setting or with the H\"olderian error bounds on $g$.

\section{Connection with the Polyak Step Size}\label{sec:polyak}

In this section, we would like to highlight the connection between our algorithm's projection step (outlined in Step 6 of Algorithm~\ref{alg:AGM-BiO}) and the Polyak step size. To make this connection, we first without loss of generality, replace $g_{k}$ with $g^{*}$. It is a reasonable argument, as $g_k$ values are close to $g^*$, a point highlighted in \eqref{eq:convergence_g}. In addition, we further assume that the set $\mathcal{Z}=\mathbb{R}^n$ to simplify the expressions. Given these substitutions, the projection step in our AGM-BiO method is equivalent to solving the following problem:
\begin{align*}
   &\min\quad  \|\bx-\bx_{k}\|^{2} \\
   & \hbox{s.t.} \quad g(\bx_k)+\fprod{\nabla g(\bx_k),\bx-\bx_k}\leq g^* 
\end{align*}
In other words,$\bx_{k+1}$ is the unique solution of the above quadratic program with a linear constraint.  By writing the optimality conditions for the above problem and considering $\lambda$ as the Lagrange multipliers associated with the linear constraint, we obtain that 
\begin{equation*}
\begin{cases}
    \bx_{k+1} = \bx_{k} - \lambda \nabla g(\bx_{k}) \\
    \lambda (g(\bx_k)+\fprod{\nabla g(\bx_k),\bx_{k+1}-\bx_k} - g^{*})=0\\
    \lambda \geq0 
\end{cases}
\end{equation*}
Given the fact that $\bx_{k+1}\neq \bx_{k}$, we can conclude that $\lambda \neq 0$, and hence we have 
\begin{equation*}
\begin{cases}
    \bx_{k+1} = \bx_{k} - \lambda \nabla g(\bx_{k}) \\
    g(\bx_k)+\fprod{\nabla g(\bx_k),\bx_{k+1}-\bx_k} -g^{*}=0\\
    \lambda >0 
\end{cases}
\end{equation*}
By replacing $ \bx_{k+1}$ in the second expression with its expression in  the first equation we obtain that 
\begin{equation*}
    \lambda = \frac{g(\bx_{k}) - g^{*}}{\|\nabla g(\bx_{k})\|^{2}}.
\end{equation*}
% This observation shows that we are updating the 
which is exactly the Polyak step size in the literature \cite{polyak1987introduction}. To solve a bilevel optimization problem, we intend to do gradient descent for both upper- and lower-level functions. Tuning the ratio of upper- and lower-level step size is generally hard. However, by connecting the projection step with the Polyak step size, we observe that the stepsize for the lower-level objective is auto-selected as the Polyak stepsize in our method. In other words, it is one of the advantages of our algorithm that we do not need to choose the lower-level stepsize or ratio of the upper- and lower-level stepsize theoretically or empirically.

\section{Experiment Details}\label{sec:exp_deatils}
In this section, we include more details of the numerical experiments in Section~\ref{sec:experiment}. All simulations are implemented using MATLAB R2022a on a PC running macOS Sonoma with an Apple M1 Pro chip and 16GB Memory.

% For completeness, we briefly review the update rules of all other algorithms in Table~\ref{sum} including a-IRG \cite{kaushik2021method}, CG-BiO \cite{jiangconditional}, Bi-SG \cite{merchav2023convex}, SEA \cite{shen2023online}, and R-APM \cite{samadi2023achieving}. In the following, we use the notation $\Pi_{\mathcal{Z}}(\cdot)$ to denote the Euclidean projection onto the set $\mathcal{Z}$.

% \begin{itemize}
%     \item The a-IRG algorithm is given by 
%     $$
%     \mathbf{x}_{k+1}=\Pi_{\mathcal{Z}}((\mathbf{x}_k-\gamma_k\left(\eta_{k}\nabla f\left(\mathbf{x}_k\right)+\nabla g\left(\mathbf{x}_k\right)\right),
%     $$
%     where $\gamma_{k}$ is the stepsize and $\eta_{k}$ is the regularization parameter. In our experiment, we choose $\gamma_{k} = \gamma_{0}/\sqrt{k+1}$ and $\eta_{k} = \eta_{0}/(k+1)^{1/4}$ for some constants $\gamma_0$ and $\eta_0$.
%     \item Bi-SG is given by 
%     \begin{equation*}
%     \begin{aligned}
%         \by_{k} &= \Pi_{\mathcal{Z}}(\bx_{k} - t_{k}\nabla g(\bx_{k}))\\
%         \bx_{k+1} &= \Pi_{\mathcal{Z}}(\by_{k} - \eta_{k}\nabla f(\by_{k}))
%     \end{aligned}
%     \end{equation*}
%     where the lower-level stepsize $t_{k} = 1/L_{g} > 0$ and the upper-level stepsize $\gamma_k = c(k+1)^{-\alpha}$ for some constants $c > 0$ and $\alpha \in (0.5,1]$.
%     \item 
% \end{itemize}

\subsection{Over-parametrized Regression}
\textbf{Dataset generation.} The original Wikipedia Math Essential dataset \cite{rozemberczki2021pytorch} composes of a data matrix of size 1068 × 731. We randomly select one of the columns as the outcome vector $\bb \in \mathbb{R}^{1068}$ and the rest to be a new matrix $\bA \in \mathbb{R}^{1068\times730}$. We set the constraint parameter $\lambda = 1$ in this experiment, i.e., the constraint set is given by $\mathcal{Z}=\left\{\boldsymbol{\beta} \mid\|\boldsymbol{\beta}\|_{2} \leq 1 \right\}$. 

\textbf{Implementation details.} To be fair, all the algorithms start from the origin as the initial point. For our AGM-BiO method, we set the target tolerances for the absolute suboptimality and infeasibility to $\epsilon_{f} = 10^{-4}$ and $\epsilon_{g} = 10^{-4}$, respectively. We choose the stepsizes as $a_k = 10^{-2}(k+1)/(4L_f)$. In each iteration, we need to do a projection onto an intersection of a $L_2$-ball and a halfspace, which has a closed-form solution. For a-IRG, we set $\eta_{0} = 10^{-3}$ and $\gamma_{0} = 10^{-3}$. For CG-BiO, we obtain an initial point with FW gap of the lower-level problem less than $\epsilon_{g}/2 = 5 \times 10^{-5}$ and choose stepsize $\gamma_{k} = 10^{-2}/(k+2)$. For Bi-SG, we set $\eta_{k} = 10^{-2}/(k+1)^{0.75}$ and $t_{k} = 1/L_{g} = 1/\lambda_{max}(\bA_{tr}^{\top}\bA_{tr}) = 1.5 \times 10^{-4}$. For SEA, we set both the lower- and upper-level stepsizes to be $10^{-4}$. For R-APM, since the lower-level problem does not satisfy the weak sharpness condition, we set $\eta = 1/(K+1) = 1.25\times 10^{-5}$ and $\gamma = 10^{-4} \leq 1/(L_{g}+\eta L_{f}) $. For PB-APG, we set the penalty parameter $\gamma = 10^4$. Note that the lower-level problem in this experiment does not satisfy H\"oderian error bound assumption, so there is no theoretical guarantee for PB-APG.

\subsection{Linear inverse problems}
\textbf{Dataset generation.} We set $\bQ = \bI_{n}$, $\bA = \mathbf{1}_{n}^{\top}$, and $\bb = 1$. The constraint set is selected as $\mathcal{Z} = \mathbb{R}_{+}^{n}$. We choose a low dimensional $(n=3)$ and a high dimensional $(n=100)$ example and run $K = 10^{3}$ number of iterations to compare the numerical performance of these algorithms, respectively.

\textbf{Implementation details.} To be fair, all the algorithms start from the same initial point randomly chosen from $\mathbb{R}_{+}^{n}$. For our AGM-BiO method, we set the stepsizes as $a_k = \gamma(k+1)/(4L_f)$, where $\gamma = 1/(\frac{2L_{g}}{L_{f}}K^{2/3}+2)$ as suggested in Theorem~\ref{thm:upper_lower_hd}. In each iteration, we need to project onto an intersection of a halfspace and $\mathbb{R}_{+}^{n}$. Since halfspaces and $\mathbb{R}_{+}^{n}$ are both convex and closed set, the projection subproblem can be solved by Dykstra's projection algorithm in \cite{bauschke2011fixed}. For a-IRG, we set $\eta_{0} = 10^{-2}$ and $\gamma_{0} = 10^{-2}$. For Bi-SG, we set $\eta_{k} = 1/(k+1)^{0.75}$ and $t_{k} = 1/L_{g}$. For SEA, we set the lower-level stepsize to be $10^{-2}$ and the upper-level stepsize to be $10^{-2}$. For R-APM, since the lower-level problem does not satisfy the weak sharpness condition, we set $\eta = 1/(K+1)$ and $\gamma = 1/(L_{g}+\eta L_{f})$. For PB-APG, we set the penalty parameter $\gamma = 10^4$. For Bisec-BiO, we choose the target tolerances to $\epsilon_f = \epsilon_g =  10^{-4}$. For comparison purposes, we limit the maximum number of gradient evaluations for each APG call to $10^2$. In this experiment, $L_{f} = 1$ and $L_{g} = n$, where $n$ is the number of dimensions.

\end{document}